\newenvironment{enumalph}{\begin{enumerate}  }{\end{enumerate}}
\newenvironment{enumroman}{\begin{enumerate}  }{\end{enumerate}}
\newcommand{\relphantom}[1]{\mathrel{\phantom{#1}}}
\newcommand{\set}[1]{\left\{ #1 \right\}}
\newcommand{\abs}[1]{\left| #1 \right|}
\newcommand{\wt}[1]{\widetilde{ #1}}
\newcommand{\wh}[1]{\widehat{ #1 }}
\newcommand{\ol}[1]{\overline{#1}}
\newcommand{\ve}{\varepsilon}
\newcommand{\wtalpha}{\wt{\alpha}}
\DeclareMathOperator{\coker}{coker}
\DeclareMathOperator{\ev}{ev}
\DeclareMathOperator{\Ext}{Ext}
\DeclareMathOperator{\opH}{H}
\newcommand{\Hbul}{\opH^\bullet}
\DeclareMathOperator{\hd}{hd}
\DeclareMathOperator{\Hom}{Hom}
\DeclareMathOperator{\id}{id}
\DeclareMathOperator{\ind}{ind}
\DeclareMathOperator{\Lie}{Lie}
\renewcommand{\mod}{\;\textup{mod}\;}
\DeclareMathOperator{\rad}{rad}
\DeclareMathOperator{\res}{res}
\DeclareMathOperator{\soc}{soc}
\DeclareMathAlphabet{\mathpzc}{OT1}{pzc}{m}{it}
\newcommand{\F}{\mathbb{F}}
\newcommand{\Z}{\mathbb{Z}}
\newcommand{\cC}{\mathcal{C}}
\newcommand{\cG}{\mathcal{G}}
\newcommand{\cgr}{\cG_r}
\newcommand{\g}{\mathfrak{g}}
\newcommand{\fu}{\mathfrak{u}}
\newcommand{\Fp}{\F_p}
\newcommand{\Fq}{\F_q}
\newcommand{\Czbar}{\ol{C}_{\Z}}
\newcommand{\Gfp}{G(\Fp)}
\newcommand{\Gfq}{G(\Fq)}
\newcommand{\Tfp}{T(\Fp)}
\newcommand{\Tfq}{T(\Fq)}
\numberwithin{equation}{subsection}
\newtheorem{theorem}{Theorem}[subsection]
\newtheorem{proposition}[theorem]{Proposition}
\newtheorem{corollary}[theorem]{Corollary}
\newtheorem{lemma}[theorem]{Lemma}
\newtheorem*{theorem*}{Theorem}
\newtheorem*{lemma*}{Lemma}
\theoremstyle{definition}
\newtheorem{remark}[theorem]{Remark}
\title[Second Cohomology for Finite Groups of Lie Type]{Second Cohomology for Finite Groups of Lie Type}
\thanks{The members of the 2010--2011 University of Georgia VIGRE Algebra Group are Brian D.\ Boe, Brian Bonsignore, Theresa Brons, Jon F.\ Carlson, Leonard Chastkofsky, Christopher M.\ Drupieski, Niles Johnson, Wenjing Li, Phong Thanh Luu, Tiago Macedo, Daniel K.\ Nakano, Nham Vo Ngo, Brandon L.\ Samples, Andrew J.\ Talian, Lisa Townsley, and Benjamin J.\ Wyser.}
\author{University of Georgia VIGRE Algebra Group}
\address{
Department of Mathematics \\
University of Georgia \\
Athens, GA~30602-7403}
\subjclass[2010]{Primary 20G10, 20C33; Secondary 20G05, 20J06.}
\begin{document}

\begin{abstract}
Let $G$ be a simple, simply-connected algebraic group defined over $\Fp$. Given a power $q = p^r$ of $p$, let $\Gfq \subset G$ be the subgroup of $\Fq$-rational points. Let $L(\lambda)$ be the simple rational $G$-module of highest weight $\lambda$. In this paper we establish sufficient criteria for the restriction map in second cohomology $\opH^2(G,L(\lambda)) \rightarrow \opH^2(\Gfq,L(\lambda))$ to be an isomorphism. In particular, the restriction map is an isomorphism under very mild conditions on $p$ and $q$ provided $\lambda$ is less than or equal to a fundamental dominant weight. Even when the restriction map is not an isomorphism, we are often able to describe $\opH^2(\Gfq,L(\lambda))$ in terms of rational cohomology for $G$. We apply our techniques to compute $\opH^2(\Gfq,L(\lambda))$ in a wide range of cases, and obtain new examples of nonzero second cohomology for finite groups of Lie type.
\end{abstract}

\maketitle

\section{Introduction}

\subsection{}

Let $k$ be an algebraically closed field of characteristic $p > 0$, and let $G$ be a simple, simply-connected algebraic group defined over the prime field $\Fp$. Given a power $q = p^r$ of $p$, let $\Gfq \subset G$ be the subgroup of $\Fq$-rational points. In their famous 1977 paper \cite{Cline:1977a}, Cline, Parshall, Scott, and van der Kallen related the cohomology of the finite group $\Gfq$ to the rational cohomology of the ambient algebraic group $G$. Specifically, given a finite-dimensional rational $G$-module $M$ and an integer $n \geq 0$, they showed that for all sufficiently large integers $e$ and $f$, the restriction map $\opH^n(G,M^{(e)}) \rightarrow \opH^n(G(\F_{p^{e+f}}),M^{(e)})$ is an isomorphism \cite[Theorem 6.6]{Cline:1977a}. Here $M^{(e)}$ denotes the rational $G$-module obtained by twisting the structure map for $M$ by the $e$-th iterate of the Frobenius morphism $F: G \rightarrow G$. The Frobenius morphism restricts to an automorphism of $\Gfq$, so one gets $\opH^n(\Gfq,M^{(e)}) \cong \opH^n(\Gfq,M)$ for all $e \geq 0$. Thus, the aforementioned theorem asserts that for sufficiently large $q$, the space $\opH^n(\Gfq,M)$ stabilizes to a fixed value. This stable value is called the \emph{generic cohomology} of $G$ in $M$. Unfortunately, even for small values of $n$, generic cohomology is unsatisfactory as a tool for computing $\opH^n(\Gfq,M)$ directly in terms of $\opH^n(G,M)$.

In this paper we develop new techniques for computing cohomology for the finite group $\Gfq$ directly in terms of cohomology for the ambient algebraic group $G$, and apply those techniques in the case when $M$ is a simple $\Gfq$-module. For $n \leq 2$ we establish specific conditions under which the restriction map $\opH^n(G,L(\lambda)) \rightarrow \opH^n(\Gfq,L(\lambda))$ is an isomorphism. Using this isomorphism, we are then able to compute the second cohomology group $\opH^2(\Gfq,L(\lambda))$ provided $\lambda$ is less than or equal to a fundamental dominant weight, and assuming some additional restrictions when the underlying root system is of type $C_n$. Even in cases where our techniques fail to directly yield an isomorphism $\opH^2(G,L(\lambda)) \cong \opH^2(\Gfq,L(\lambda))$, for example, when $\lambda$ is the highest long root or the highest short root of the root system of $G$, we are often able to identify $\opH^2(\Gfq,L(\lambda))$ with a rational cohomology group for $G$. In this way we obtain new nonzero calculations of second cohomology for finite groups of Lie type when the coefficients are taken in the adjoint representation. A salient feature of our results is that we require no twisting of the coefficient module by the Frobenius morphism, which enables us to make calculations for relatively small values of $p$ and $q$.

The machinery developed and calculations performed here are interesting in several respects. First, our calculations extend to second cohomology the seminal first cohomology calculations of Cline, Parshall, and Scott \cite{Cline:1975,Cline:1977b} and of Jones \cite{Jones:1975}, who computed for all $p$ and for almost all $q$ the space $\opH^1(\Gfq,L(\lambda))$ when $\Gfq$ is a (twisted or untwisted) finite Chevalley group and $\lambda$ is a minimal nonzero dominant weight. We obtain for $p \geq 5$ new proofs of the second cohomology calculations made by Bell \cite{Bell:1978} for finite special linear groups, and confirm many of the calculations made by Avrunin \cite{Avrunin:1978} and Kleshchev \cite{Kleshchev:1994}. Second, the machinery developed here generalizes and simplifies the techniques we developed in our previous paper \cite{UGA:2011} for the case $n=1$. Third, our machinery introduces the infinitesimal Frobenius kernels $G_r$, $B_r$, and $U_r$ into the study of the restriction map $\opH^n(G,M) \rightarrow \opH^n(\Gfq,M)$. This is in contrast to the works cited above, in which the authors made no use of infinitesimal group schemes.

\subsection{Notation, Organization, and Main Results}

The notation used in this paper is the same as in \cite{UGA:2011}, and uses the conventions stated in \cite{Jantzen:2003}. In particular, we use the standard Bourbaki labeling of simple roots and fundamental dominant weights (cf.\ also \cite[\S 13]{Humphreys:1978}).

The paper is organized as follows: In Section \ref{section:comparingalgebraicandfinite} we study the induction functor $\ind_{\Gfq}^G(-)$ from the category of $k\Gfq$-modules to the category of rational $G$-modules, and use it to obtain a long exact sequence in cohomology containing for each $n \geq 0$ the restriction map $\opH^n(G,M) \rightarrow \opH^n(\Gfq,M)$. By analyzing the terms in the long exact sequence we establish the following:

\begin{theorem} \label{theorem:vanishingconditionforExt1iso}
Let $\lambda \in X_r(T)$. Suppose that $\Ext_{U_r}^1(k,L(\lambda))$ is semisimple as a $B/U_r$-module, and $\Ext_{U_r}^1(k,L(\lambda))^{\Tfq} = \Ext_{U_r}^1(k,L(\lambda))^T$. Then the restriction maps
\[
\opH^1(G,L(\lambda)) \rightarrow \opH^1(\Gfq, L(\lambda)) \quad \text{and} \quad \opH^2(G,L(\lambda)) \rightarrow \opH^2(\Gfq,L(\lambda))
\]
are an isomorphism and an injection, respectively.
\end{theorem}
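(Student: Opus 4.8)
The plan is to derive the theorem from the long exact sequence constructed in Section~\ref{section:comparingalgebraicandfinite}. Since $\Gfq$ is a finite (hence closed) subgroup scheme of $G$ and $G/\Gfq$ is affine, the functor $\ind_{\Gfq}^G(-)$ is exact and carries injectives to injectives, so Shapiro's lemma gives a natural isomorphism $\opH^n(G,\ind_{\Gfq}^G M)\cong\opH^n(\Gfq,M)$ for every rational $G$-module $M$ and all $n\ge 0$. Taking $M=L(\lambda)$ and using that the unit of adjunction $L(\lambda)\hookrightarrow\ind_{\Gfq}^G L(\lambda)$ is injective, put $N:=\ind_{\Gfq}^G L(\lambda)/L(\lambda)$; the long exact sequence in $\opH^\bullet(G,-)$ then reads
\[
\cdots\to\opH^{n-1}(G,N)\to\opH^n(G,L(\lambda))\xrightarrow{\ \res\ }\opH^n(\Gfq,L(\lambda))\to\opH^n(G,N)\to\cdots,
\]
with the indicated arrow the restriction map. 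Hence it suffices to show $\opH^0(G,N)=0$ (equivalently, that $\res$ is injective in degree one) and $\opH^1(G,N)=0$: the two vanishings together make $\res$ bijective in degree one, and $\opH^1(G,N)=0$ alone makes $\res$ injective in degree two.

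Everything now comes down to the low-degree rational cohomology of the ``error module'' $N$, which is exactly what the analysis of the terms of the long exact sequence in Section~\ref{section:comparingalgebraicandfinite} is built to compute. I would use that machinery to express $\opH^0(G,N)$ and $\opH^1(G,N)$ through $\ind_B^G$ applied to the $B/U_r$-modules $\Ext^i_{U_r}(k,L(\lambda))$ for $i\le 1$, the point being that the discrepancy between $\opH^\bullet(\Gfq,-)$ and $\opH^\bullet(G,-)$ is carried entirely by weight vectors that are fixed by the finite torus $\Tfq$ but not by $T$ --- equivalently, by the nonzero weights of these $\Ext$-modules lying in $(q-1)X(T)$. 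In degree one the hypothesis $\Ext^1_{U_r}(k,L(\lambda))^{\Tfq}=\Ext^1_{U_r}(k,L(\lambda))^T$ is precisely what eliminates such weight vectors, leaving only genuinely $T$-fixed classes, and the semisimplicity of $\Ext^1_{U_r}(k,L(\lambda))$ as a $B/U_r$-module forces $U/U_r$ to act trivially --- so that it is a direct sum of one-dimensional $T$-weight modules and no further obstruction can enter through higher $\Ext$ over $B/U_r$. Combining these constraints with the standard vanishing properties of $\ind_B^G$ on the relevant (zero or non-dominant) weights should then yield $\opH^0(G,N)=\opH^1(G,N)=0$.

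I expect the main obstacle to be this last matching step: one must verify that the two hypotheses, taken together, annihilate \emph{every} contribution to $\opH^0(G,N)$ and $\opH^1(G,N)$ coming out of the section's analysis --- that the $\Tfq$-versus-$T$ condition disposes of the ``finite-group-only'' classes while $B/U_r$-semisimplicity disposes of the possible extension classes --- and that no class survives the interplay of $T$-weights with induction and vanishing from $B$. The reduction via Shapiro's lemma and the long exact sequence is purely formal; all the content lies in confirming that the hypotheses are exactly tuned to the error terms the section produces.
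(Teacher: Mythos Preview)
Your overall shape is right and matches the paper: the long exact sequence \eqref{eq:lesrestriction} reduces the theorem to $\Hom_G(k,L(\lambda)\otimes N)=0$ and $\Ext_G^1(k,L(\lambda)\otimes N)=0$ (here $N$ is the paper's $N$, so your error module is $L(\lambda)\otimes N$ via the tensor identity), and you correctly identify that the two hypotheses are meant to kill ``finite-torus-only'' weights and extension obstructions respectively.

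The gap is in how you propose to compute these error terms. There is no direct expression of $\opH^i(G,L(\lambda)\otimes N)$ as $\ind_B^G$ applied to $\Ext_{U_r}^i(k,L(\lambda))$; the machinery does not work that way. The paper's argument uses three concrete ingredients you do not name. First, $L(\lambda)\otimes N$ has a filtration with sections $L(\lambda)\otimes H^0(\mu)\otimes H^0(\mu^*)^{(r)}$ for $0\neq\mu\in X(T)_+$, reducing the problem to the per-$\mu$ vanishing $\Ext_G^i(V(\mu)^{(r)},L(\lambda)\otimes H^0(\mu))=0$. Second, for each $\mu$ one runs the LHS spectral sequence \eqref{eq:LHSforvanishing} for $G_r\triangleleft G$: Lemma~\ref{lemma:E2i0vanishing} kills $E_2^{i,0}$ for $i\ge 1$ (this already gives the $\Hom$ vanishing, with no hypothesis on $\Ext_{U_r}^1$ needed at all), and Lemma~\ref{lemma:ext1isotohom} then collapses $\Ext_G^1$ to $\Hom_{G/G_r}(V(\mu)^{(r)},\Ext_{G_r}^1(k,L(\lambda)\otimes H^0(\mu)))$. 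Third --- and this is where the hypotheses actually enter --- semisimplicity is used in Lemma~\ref{lemma:goodfiltrationforExt1} to show $\Ext_{G_r}^1(k,L(\lambda)\otimes H^0(\mu))^{(-r)}\cong\bigoplus_i H^0(\xi_i)$ with $p^r\xi_i=\nu_i+\mu$ for weights $\nu_i$ of $\Ext_{U_r}^1(k,L(\lambda))$; then $\Hom_G(V(\mu),H^0(\xi_i))\neq 0$ forces $\mu=\xi_i$, whence $\nu_i=(p^r-1)\mu$ is a nonzero $\Tfq$-fixed weight, contradicting the second hypothesis. The filtration of $N$ and the passage through $G_r$ (rather than $\ind_B^G$) are what connect the $\Ext_{U_r}^1$ hypotheses to the cohomology of the error module; without naming them your plan has no bridge from assumption to conclusion.
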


\begin{theorem} \label{theorem:Ext2Gfqiso}
Let $\lambda \in X_r(T)$. Suppose that $\Ext_{U_r}^1(k,L(\lambda))$ is semisimple as a $B/U_r$-module, $\Ext_{U_r}^i(k,L(\lambda))^{\Tfq} = \Ext_{U_r}^i(k,L(\lambda))^T$ for $i \in \set{1,2}$, and
\[
p^r > \max \{-(\nu,\gamma^\vee): \gamma \in \Delta, \nu \in X(T),\Ext_{U_r}^1(k,L(\lambda))_\nu \neq 0 \}.
\]
Then the restriction maps
\[
\opH^2(G,L(\lambda)) \rightarrow \opH^2(\Gfq,L(\lambda)) \quad \text{and} \quad \opH^3(G,L(\lambda)) \rightarrow \opH^3(\Gfq,L(\lambda))
\]
are an isomorphism and an injection, respectively.
\end{theorem}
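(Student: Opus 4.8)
The plan is to extract the statement from the long exact sequence constructed in Section~\ref{section:comparingalgebraicandfinite}. Write $N$ for the cokernel of the unit map $k \hookrightarrow \ind_{\Gfq}^G k$; tensoring with $L(\lambda)$ and using the tensor identity $\ind_{\Gfq}^G L(\lambda) \cong L(\lambda) \otimes \ind_{\Gfq}^G k$ gives a short exact sequence $0 \to L(\lambda) \to \ind_{\Gfq}^G L(\lambda) \to L(\lambda) \otimes N \to 0$ of rational $G$-modules. Applying $\opH^\bullet(G,-)$ and invoking the identification $\opH^n(G, \ind_{\Gfq}^G L(\lambda)) \cong \opH^n(\Gfq, L(\lambda))$ yields the long exact sequence
\[
\cdots \to \opH^n(G, L(\lambda)) \xrightarrow{\mathrm{res}} \opH^n(\Gfq, L(\lambda)) \to \opH^n(G, L(\lambda) \otimes N) \xrightarrow{\delta_n} \opH^{n+1}(G, L(\lambda)) \to \cdots,
\]
whose first map is restriction. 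A routine diagram chase shows that $\mathrm{res}$ is an isomorphism in degree $n$ and an injection in degree $n+1$ exactly when $\delta_{n-1} = 0$ and $\delta_n = 0$. Since the hypotheses here contain those of Theorem~\ref{theorem:vanishingconditionforExt1iso} -- the semisimplicity hypothesis together with the case $i = 1$ of the invariants hypothesis -- that theorem applies and gives $\delta_1 = 0$. So the problem reduces to proving $\opH^2(G, L(\lambda) \otimes N) = 0$, which then forces $\delta_2 = 0$.

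To compute $\opH^\bullet(G, L(\lambda) \otimes N)$ I would use the description of $N$ from Section~\ref{section:comparingalgebraicandfinite}. Under the Lang isomorphism $\ind_{\Gfq}^G k$ is the coordinate ring $k[G]$ carrying the Frobenius-twisted conjugation action of $G$; restricted to any infinitesimal Frobenius kernel -- in particular to $U_r$ -- the twist disappears and only a translation action on a coordinate ring remains, so the restriction is injective. Feeding this injectivity into the relevant Lyndon--Hochschild--Serre spectral sequences collapses them and produces a description of $\opH^\bullet(G, L(\lambda) \otimes N)$ built from rational cohomology groups of $G$ -- equivalently, after Kempf's theorem, cohomology of line bundles on the flag variety $G/B$ -- with coefficients assembled from the modules $\Ext_{U_r}^i(k, L(\lambda))$, suitably Frobenius-twisted and regarded as $B/U_r$-modules, for $i$ no larger than the cohomological degree. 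In degree $2$ three kinds of contributions appear: an $\Ext_{U_r}^0(k, L(\lambda)) = L(\lambda)^{U_r}$ term carrying an $\opH^2$, an $\Ext_{U_r}^1(k, L(\lambda))$ term carrying an $\opH^1$, and an $\Ext_{U_r}^2(k, L(\lambda))$ term carrying an $\opH^0$. The $\Ext^0$ term is disposed of just as the corresponding term in the proof of Theorem~\ref{theorem:vanishingconditionforExt1iso} (this is where $\lambda \in X_r(T)$ enters), while the $\Ext^2$ term is a space of $B$-invariants of a Frobenius twist of $\Ext_{U_r}^2(k, L(\lambda))$ and therefore vanishes by the hypothesis $\Ext_{U_r}^2(k, L(\lambda))^{\Tfq} = \Ext_{U_r}^2(k, L(\lambda))^T$.

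The main obstacle, and the ingredient that goes genuinely beyond the first-cohomology case, is the middle contribution, which pairs $\Ext_{U_r}^1(k, L(\lambda))$ with a first rational cohomology group. The semisimplicity hypothesis forces $U/U_r$ to act trivially on $\Ext_{U_r}^1(k, L(\lambda))$, so that module is just the direct sum of its $T$-weight spaces $k_\nu$; after untwisting the Frobenius the middle contribution becomes a sum, over those weights $\nu$, of higher cohomology of line bundles on $G/B$ attached to the weights $\nu + p^r\rho$. The hypothesis $\Ext_{U_r}^1(k, L(\lambda))^{\Tfq} = \Ext_{U_r}^1(k, L(\lambda))^T$ removes the weights $\nu$ that could contribute a trivial-type class, and the inequality $p^r > \max\{-(\nu, \gamma^\vee) : \gamma \in \Delta,\ \nu \in X(T),\ \Ext_{U_r}^1(k, L(\lambda))_\nu \neq 0\}$ says precisely that every remaining $\nu + p^r\rho$ is dominant, so that the associated line bundle is acyclic in positive degrees by Kempf's theorem. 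Combining the three vanishing statements gives $\opH^2(G, L(\lambda) \otimes N) = 0$, hence $\delta_2 = 0$, and with $\delta_1 = 0$ this yields the asserted isomorphism in degree $2$ and injection in degree $3$. I expect the one delicate point in carrying this out to be the bookkeeping of Frobenius twists through the spectral sequences -- in particular, confirming that it is exactly the displayed bound on $p^r$ that the middle term forces; everything conceptual is concentrated there.
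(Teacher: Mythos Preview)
Your high-level strategy---reduce via the long exact sequence to $\opH^i(G,L(\lambda)\otimes N)=0$ for $i=1,2$, with $i=1$ covered by Theorem~\ref{theorem:vanishingconditionforExt1iso}---is exactly the paper's. (Minor slip: ``exactly when $\delta_{n-1}=0$ and $\delta_n=0$'' is not the right equivalence; what you need and later use is the stronger vanishing of $\opH^{n-1}$ and $\opH^n$ of $L(\lambda)\otimes N$.)

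The gap is in how you propose to show $\opH^2(G,L(\lambda)\otimes N)=0$. Two issues. First, you never invoke the filtration of $N$ with sections $H^0(\mu)\otimes H^0(\mu^*)^{(r)}$ for $0\neq\mu\in X(T)_+$. That filtration is what reduces the problem to $\Ext_G^2(V(\mu)^{(r)},L(\lambda)\otimes H^0(\mu))=0$ for each $\mu$, and it is the only route in sight from $U_r$-cohomology to $G$-cohomology; the $U_r$-injectivity of $\cgr(k)$ does not by itself give a spectral sequence converging to $\opH^\bullet(G,-)$, since $U_r$ is not normal in $G$.

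Second, you misassign the hypotheses. The paper runs the LHS sequence for $G_r\triangleleft G$. The semisimplicity hypothesis \emph{alone} kills the middle row $E_2^{i,1}$ for $i\geq 1$: it makes $\Ext_{G_r}^1(k,L(\lambda)\otimes H^0(\mu))^{(-r)}$ a direct sum of induced modules (Lemma~\ref{lemma:goodfiltrationforExt1}), whence $\Ext_G^{\geq 1}(V(\mu),-)$ against it vanishes. The $p^r$-bound is not used there; it enters in the analysis of $E_2^{0,2}$, inside a second spectral sequence $R^i\ind_{B/B_r}^{G/G_r}\Ext_{B_r}^j(k,L(\lambda)\otimes\mu)\Rightarrow\Ext_{G_r}^{i+j}$ (Lemma~\ref{lemma:HomtoExt2vanish}). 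There the bound forces every weight $\xi$ with $p^r\xi=\nu+\mu$ (for $\nu$ a weight of $\Ext_{U_r}^1(k,L(\lambda))$) to be dominant, so Kempf collapses that inner sequence---no $\rho$-shift appears, and your ``$\nu+p^r\rho$'' has no counterpart. Once collapsed, Frobenius reciprocity and the $i=2$ invariants hypothesis finish. The $i=1$ invariants hypothesis plays no separate role in degree~$2$; it is consumed entirely by Theorem~\ref{theorem:vanishingconditionforExt1iso}.
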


Theorems \ref{theorem:vanishingconditionforExt1iso} and \ref{theorem:Ext2Gfqiso} are established in Sections \ref{subsection:proofofvanishingconditionforExt1iso} and \ref{subsection:proofofExt2Gfqiso}, respectively. In Section \ref{subsection:alternatecalculation} we apply techniques developed in \cite{Bendel:2011a} to provide a characterization of $\opH^2(\Gfq,L(\lambda))$ in certain cases where the restriction map $\opH^2(G,L(\lambda)) \rightarrow \opH^2(\Gfq,L(\lambda))$ need not be an isomorphism.

In Section \ref{section:Urcohomology} we study the cohomology group $\Ext_{U_r}^1(L(\lambda),k)$, showing under mild restrictions on $p$ that it is semisimple as a $B/U_r$-module provided $\lambda \in X(T)_+$ is a dominant root or is less than or equal to a fundamental dominant weight. A list of the dominant roots, that is, the highest long and short roots in $\Phi$, written in terms of the fundamental weight basis, is provided in Table \ref{table:problemweights}. For a list of all $\lambda \in X(T)_+$ that are less than or equal to a fundamental dominant weight, consult \cite[\S 7.1]{UGA:2011}. In particular, if the underlying root system is of classical type and $\lambda \in X(T)_+$, then $\lambda \leq \omega_j$ only if $\lambda = 0$ or $\lambda = \omega_i$ for some $1 \leq i \leq j$. The results obtained in Section \ref{section:Urcohomology} generalize to arbitrary $r \geq 1$ those obtained in \cite[\S 3]{UGA:2011} for the special case $r=1$.

In Section \ref{section:applications} we apply the explicit calculation of $\Ext_{U_r}^1(L(\lambda),k)$ to show, with only a few exceptions, that the hypotheses of Theorems \ref{theorem:vanishingconditionforExt1iso} and \ref{theorem:Ext2Gfqiso} are satisfied when $\lambda \in X(T)_+$ is less than or equal to a fundamental dominant weight and when $p$ and $q$ satisfy the restrictions in Table \ref{table:pandq}. In particular, we obtain the following theorem, which is established in Section \ref{subsection:proofofExt2restrictioniso}:

\begin{theorem} \label{theorem:Ext2restrictioniso}
Let $\lambda \in X(T)_+$ be less than or equal to a fundamental dominant weight. Suppose $p$ and $q$ satisfy the restrictions stated in Table \ref{table:pandq}, and $\lambda$ is not one of the weights listed in Table~\ref{table:problemweights}. Then the restriction map
\[
\opH^{2}(G,L(\lambda)) \rightarrow \opH^{2}(\Gfq,L(\lambda))
\]
is an isomorphism.
\end{theorem}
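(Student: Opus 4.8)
The plan is to derive Theorem~\ref{theorem:Ext2restrictioniso} from Theorem~\ref{theorem:Ext2Gfqiso} by checking, case by case over the (short) list of admissible $\lambda$, that the three hypotheses of the latter are met. Two preliminary reductions make this feasible. First, because $\lambda$ is assumed $\le$ a fundamental dominant weight, the classification recalled in Section~\ref{section:Urcohomology} (and \cite[\S 7.1]{UGA:2011}) restricts $\lambda$ to an explicit finite list --- in classical type simply $\lambda\in\{0,\omega_1,\dots,\omega_j\}$ --- and, once $p$ obeys the mild constraints of Table~\ref{table:pandq}, every such $\lambda$ not appearing in Table~\ref{table:problemweights} is $p^r$-restricted; hence $\lambda\in X_r(T)$, $L(\lambda)$ is a simple $k\Gfq$-module, and Theorem~\ref{theorem:Ext2Gfqiso} applies. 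Second, via the duality $\Ext_{U_r}^n(M,N)\cong\Ext_{U_r}^n(N^*,M^*)$ one has $\Ext_{U_r}^n(k,L(\lambda))\cong\Ext_{U_r}^n(L(-w_0\lambda),k)$, and since $-w_0\lambda$ is again $\le$ a fundamental weight, the structural results proved in Section~\ref{section:Urcohomology} for $\Ext_{U_r}^1(L(\mu),k)$ transfer to $\Ext_{U_r}^1(k,L(\lambda))$.

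With these in hand, hypothesis (i) of Theorem~\ref{theorem:Ext2Gfqiso}, namely that $\Ext_{U_r}^1(k,L(\lambda))$ is semisimple as a $B/U_r$-module, is exactly the output of Section~\ref{section:Urcohomology}, valid because $\lambda\le\omega_j$, $\lambda$ is not one of the dominant roots in Table~\ref{table:problemweights}, and $p$ satisfies Table~\ref{table:pandq}. Granting (i), $\Ext_{U_r}^1(k,L(\lambda))$ is a direct sum of one-dimensional $T$-weight spaces, so $\Ext_{U_r}^i(k,L(\lambda))^T$ is simply its zero-weight space; and, for any $i$, $\Ext_{U_r}^i(k,L(\lambda))^{\Tfq}$ is the sum of those weight spaces whose weights lie in $(q-1)X(T)$, since a character of $T$ is trivial on $\Tfq$ precisely when it is $(q-1)$-divisible in $X(T)$. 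Thus hypothesis (ii) for $i=1$ and hypothesis (iii) become purely numerical: the weights of $\Ext_{U_r}^1(k,L(\lambda))$ are listed explicitly in Section~\ref{section:Urcohomology} and are bounded by a constant depending only on $\Phi$ and on the (bounded) weight $\lambda$, so $0$ is the only $(q-1)$-divisible weight among them and the maximum appearing in Theorem~\ref{theorem:Ext2Gfqiso} is at most that same constant; since the entries of Table~\ref{table:pandq} are chosen to exceed these constants, both (ii) for $i=1$ and (iii) hold. This is a finite, if lengthy, inspection over the root systems and the weights $\lambda\le\omega_j$.

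The remaining --- and main --- obstacle is hypothesis (ii) for $i=2$, since Section~\ref{section:Urcohomology} computes $\Ext_{U_r}^1(L(\lambda),k)$ but not $\Ext_{U_r}^2$. Here it suffices to bound the $T$-weights of $\Ext_{U_r}^2(k,L(\lambda))$ without computing the group itself. I would do this either by regarding $\Ext_{U_r}^\bullet(k,L(\lambda))$ as a module over the cohomology ring $\Ext_{U_r}^\bullet(k,k)$, whose low-degree generators carry explicitly bounded weights, or by running the Lyndon--Hochschild--Serre spectral sequence for the Frobenius filtration $U_1\triangleleft\cdots\triangleleft U_r$ together with the known structure of $\Ext_{U_1}^\bullet(k,-)$ and the fact that $L(\lambda)|_{U_r}$ has composition factors only among the $L(\mu)$ with $\mu\le\lambda$. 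Either way, every weight of $\Ext_{U_r}^2(k,L(\lambda))$ is a sum of at most two weights drawn from explicit small sets, hence again bounded by a constant depending only on $\Phi$ and $\lambda$, and the argument of the previous paragraph shows that no nonzero such weight is $(q-1)$-divisible once $p$ and $q$ satisfy Table~\ref{table:pandq}.

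Once hypotheses (i)--(iii) are verified in every case, Theorem~\ref{theorem:Ext2Gfqiso} gives that $\opH^2(G,L(\lambda))\to\opH^2(\Gfq,L(\lambda))$ is an isomorphism, which is the claim. (If for some borderline $(p,q)$ the numerical hypothesis (iii) were too delicate, one could instead combine the injectivity supplied by Theorem~\ref{theorem:vanishingconditionforExt1iso} with a separate surjectivity check, but the ranges in Table~\ref{table:pandq} are arranged so that this is unnecessary.) Finally, the dominant-root weights of Table~\ref{table:problemweights} are excluded precisely because there $\Ext_{U_r}^1(k,L(\lambda))$ can fail to be semisimple as a $B/U_r$-module --- or its weights become too large relative to the $p$ permitted by Table~\ref{table:pandq} --- so hypothesis (i) breaks down and the present argument does not apply; those cases are treated separately in Sections~\ref{subsection:alternatecalculation} and~\ref{section:applications}.
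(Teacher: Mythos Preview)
Your overall strategy is correct and matches the paper's: reduce to Theorem~\ref{theorem:Ext2Gfqiso} by verifying its three hypotheses, with semisimplicity supplied by Theorem~\ref{theorem:semisimplicity}, the $i=1$ weight conditions by the explicit description in \eqref{eq:Ursemisimplicity} (this is Lemma~\ref{lemma:Ext1weights}), and the $i=2$ condition via a spectral-sequence weight bound (Lemmas~\ref{lemma:Ext2U1Tfqinvariants} and~\ref{lemma:Ext2UrTfqinvariants}). Your second proposed method for $i=2$, the LHS sequence for $U_1\lhd U_r$ combined with the May-type spectral sequence \eqref{eq:U1Liealgspecseq} for $U_1$, is exactly what the paper does. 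Your first proposed method, treating $\Ext_{U_r}^\bullet(k,L(\lambda))$ as a module over $\Ext_{U_r}^\bullet(k,k)$, does not obviously control degree-$2$ weights: there is no reason $\Ext_{U_r}^2(k,L(\lambda))$ should be generated over the ring by lower degrees, so this route would need more justification.

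Your final paragraph, however, misidentifies the obstruction. The weights in Table~\ref{table:problemweights} are \emph{not} excluded because hypothesis~(i) fails: Theorem~\ref{theorem:semisimplicity} explicitly covers the dominant roots, so $\Ext_{U_r}^1(k,L(\lambda))$ \emph{is} semisimple for $\lambda\in\{\alpha_0,\wtalpha\}$ under the stated bounds on $p$, and Lemma~\ref{lemma:Ext1weights} shows hypotheses (ii) for $i=1$ and (iii) also hold for these $\lambda$. What breaks is hypothesis~(ii) for $i=2$: for $\lambda$ a dominant root the weight $(p^r-1)\lambda$ genuinely occurs in $\Ext_{U_r}^2(k,L(\lambda))$ (coming from the $E_2^{2,0}$-term $(\fu^*)^{(r)}\otimes w_0\lambda$; see Remarks~\ref{remark:nonzeroTfpweight} and~\ref{remark:nonzeroTfqweight}), so $\Ext_{U_r}^2(k,L(\lambda))^{\Tfq}\neq\Ext_{U_r}^2(k,L(\lambda))^T$. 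The same mechanism accounts for the $A_2$, $q=5$, $\lambda\in\{\omega_1,\omega_2\}$ entry. This distinction matters, because it is precisely the surviving nonzero $\Tfq$-invariant in degree~$2$ that produces the nontrivial cohomology in Theorem~\ref{theorem:otherH2calculations}; if the failure were in hypothesis~(i), the alternate route via Theorem~\ref{theorem:alternatecalculation} would not have the clean one-dimensional answer it does.
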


\begin{table}[htbp]
\renewcommand{\arraystretch}{1.2}
\begin{tabular}{ll}
\hline
Type \hspace{2ex} & Restrictions on $p$ and $q$ \\
\hline
$A_n$ & $p > 3$ \\
$B_n$ & $p > 3$ \\
$C_n$ & $p > 3$ ($q > 5$ if $\lambda \in \Z\Phi$) \\
$D_n$ & $p > 3$ \\
$E_6$ & $p > 3$ \\
$E_7$ & $p > 3$, $q > 5$ \\
$E_8$ & $p > 5$ \\
$F_4$ & $p > 3$, $q > 5$ \\
$G_2$ & $p > 5$ \\
\hline
& \\
\end{tabular}
\caption{Restrictions on $p$ and $q$.} \label{table:pandq}
\end{table}

\begin{table}[htbp]
\renewcommand{\arraystretch}{1.2}
\begin{tabular}{ll}
\hline
Type & Weights \\
\hline
$A_2$, $q=5$ & $\omega_1,\omega_2$ \\
$A_n$, $n \geq 1$ & $\wtalpha = \omega_1 + \omega_n$ \\
$B_2$ & $\alpha_0 = \omega_1$, $\wtalpha = 2\omega_2$ \\
$B_n$, $n \geq 3$ & $\alpha_0 = \omega_1$, $\wtalpha = \omega_2$ \\
$C_n$, $n \geq 3$ & $\alpha_0 = \omega_2$, $\wtalpha = 2\omega_1$ \\
$D_n$, $n \geq 4$ & $\wtalpha = \omega_2$ \\
$E_6$ & $\wtalpha = \omega_2$ \\
$E_7$ & $\wtalpha = \omega_1$ \\
$E_8$ & $\wtalpha = \omega_8$ \\
$F_4$ & $\alpha_0 = \omega_4,\wtalpha = \omega_1$ \\
$G_2$ & $\alpha_0 = \omega_1,\wtalpha = \omega_2$ \\
\hline
& \\
\end{tabular}
\caption{Certain fundamental weights and dominant roots. Highest short roots are denoted by $\alpha_0$, and highest long roots are denoted by $\wtalpha$.} \label{table:problemweights}
\end{table}

In Sections \ref{subsection:vanishingresults}--\ref{subsection:E8p31} we summarize known results on the cohomology of rational $G$-modules, and obtain in Section \ref{subsection:proofofH2vanishing}, as an immediate corollary of Theorem \ref{theorem:Ext2restrictioniso}, the following calculation of second cohomology groups for $\Gfq$:

\begin{corollary} \label{corollary:H2vanishing}
Let $\lambda \in X(T)_+$ be less than or equal to a fundamental dominant weight. Suppose $p$ and $q$ satisfy the restrictions stated in Table \ref{table:pandq}, and that $\lambda$ is not one of the weights listed in Table~\ref{table:problemweights}. If $\Phi$ is of type $C_n$ and $\lambda = \omega_j$ with $j$ even, assume that $p > n$, and if $\Phi$ is of type $E_8$ with $p=31$, assume $\lambda \neq \omega_7+\omega_8$. Then $\opH^2(\Gfq,L(\lambda)) =0$, except possibly for the following cases:
\begin{itemize}
\item $\Phi$ is of type $E_7$, $p=5$, and $\lambda = 2\omega_7$;
\item $\Phi$ is of type $E_7$, $p=7$, and $\lambda = \omega_2+\omega_7$;
\item $\Phi$ is of type $E_8$, $p=7$, and $\lambda \in \set{2\omega_7,\omega_1+\omega_7,\omega_2+\omega_8}$;
\item $\Phi$ is of type $E_8$, $p=31$, and $\lambda =\omega_6+\omega_8$.
\end{itemize}
If $\Phi$ is of type $E_{8}$ and $p=31$, then $\opH^2(\Gfq,L(\omega_7+\omega_8)) \cong k$. If the Lusztig Character Formula\footnote{The Lusztig Character Formula is conjectured to hold for semisimple algebraic groups when $p\geq h$.} holds for type $E_8$ when $p=31$, then $\opH^2(\Gfq,L(\omega_{6}+\omega_{8}))=0$.
\end{corollary}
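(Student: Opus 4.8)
The plan is to reduce the statement to the computation of the rational cohomology group $\opH^2(G,L(\lambda))$ via Theorem~\ref{theorem:Ext2restrictioniso}, and then to read off the answer from the survey of known rational cohomology computations carried out in Sections~\ref{subsection:vanishingresults}--\ref{subsection:E8p31}. The hypotheses imposed on $\lambda$, $p$, and $q$ in the corollary are precisely those of Theorem~\ref{theorem:Ext2restrictioniso}, so that theorem applies and the restriction map $\opH^2(G,L(\lambda)) \to \opH^2(\Gfq,L(\lambda))$ is an isomorphism. It therefore suffices to establish the asserted (non)vanishing statements for $\opH^2(G,L(\lambda))$ and then transport them along this isomorphism.

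First I would enumerate the weights in play. Since $\lambda \in X(T)_+$ is bounded above by a fundamental dominant weight and is excluded from Table~\ref{table:problemweights}, the list in \cite[\S 7.1]{UGA:2011} reduces the possibilities to $\lambda = 0$, a fundamental weight $\omega_i$, or, in the exceptional types, one of a short list of additional small dominant weights --- among them the weights $2\omega_7$, $\omega_2+\omega_7$, $\omega_1+\omega_7$, $\omega_2+\omega_8$, $\omega_6+\omega_8$, and $\omega_7+\omega_8$ that appear in the exceptional cases. For $\lambda = 0$ we have $\opH^2(G,k)=0$ since $G$ is semisimple, and the restriction isomorphism gives $\opH^2(\Gfq,k) = 0$. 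For the remaining weights I would proceed case by case through the root systems, invoking for each admissible $(\Phi,p,\lambda)$ the rational cohomology result recorded in Sections~\ref{subsection:vanishingresults}--\ref{subsection:E8p31}; these give $\opH^2(G,L(\lambda)) = 0$ in every case apart from the exceptions listed in the corollary. The additional hypotheses --- $p > n$ in type $C_n$ when $\lambda = \omega_j$ with $j$ even, and $\lambda \neq \omega_7+\omega_8$ in type $E_8$ when $p=31$ --- are exactly what is needed for the relevant entry of that survey to yield vanishing rather than an undetermined or nonzero answer.

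For the handful of genuinely exceptional pairs --- type $E_7$ with $p=5$ and $\lambda = 2\omega_7$; type $E_7$ with $p=7$ and $\lambda = \omega_2+\omega_7$; type $E_8$ with $p=7$ and $\lambda \in \set{2\omega_7,\omega_1+\omega_7,\omega_2+\omega_8}$; and type $E_8$ with $p=31$ and $\lambda = \omega_6+\omega_8$ --- the available computations do not establish that $\opH^2(G,L(\lambda))$ vanishes, so these are recorded as possible exceptions (the restriction isomorphism still identifies $\opH^2(\Gfq,L(\lambda))$ with $\opH^2(G,L(\lambda))$, but the latter is not pinned down). For type $E_8$ with $p=31$ and $\lambda = \omega_7+\omega_8$, the computation in Section~\ref{subsection:E8p31} gives $\opH^2(G,L(\omega_7+\omega_8)) \cong k$, so the restriction isomorphism yields $\opH^2(\Gfq,L(\omega_7+\omega_8)) \cong k$. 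Likewise that section shows that, under the Lusztig Character Formula for type $E_8$ at $p=31$, one has $\opH^2(G,L(\omega_6+\omega_8)) = 0$, and the restriction isomorphism then gives the stated conditional vanishing of $\opH^2(\Gfq,L(\omega_6+\omega_8))$.

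The work is organizational rather than conceptual, and the main obstacle is the bookkeeping: for each root system and each admissible weight one must check that the prime and field-size restrictions of Tables~\ref{table:pandq} and~\ref{table:problemweights} are compatible with the hypotheses of the rational cohomology calculation being cited, and confirm that the list of genuine exceptions produced in this way is exactly the one claimed. In particular the small-characteristic subtleties for $E_7$ ($p = 5,7$) and $E_8$ ($p = 7,31$), the extra condition $p > n$ for $C_n$, and the single reliance on the Lusztig Character Formula all have to be tracked carefully.
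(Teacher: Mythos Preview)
Your proposal is correct and follows essentially the same route as the paper: reduce to $\opH^2(G,L(\lambda))$ via Theorem~\ref{theorem:Ext2restrictioniso}, then invoke the vanishing results of Section~\ref{subsection:vanishingresults} (specifically Corollary~\ref{corollary:H2vanishingclassicalandminuscule}, Proposition~\ref{proposition:H2vanishingexceptional}) together with Lemma~\ref{lemma:typeclargeprimevanishing} for type $C_n$, leaving only the $E_8$, $p=31$ cases to be handled via the structural analysis of Section~\ref{subsection:E8p31}. One small correction: Section~\ref{subsection:E8p31} itself only determines the composition series of the relevant $H^0(\lambda_i)$; the actual computations $\opH^2(G,L(\omega_7+\omega_8)) \cong k$ and (under the Lusztig Character Formula) $\opH^2(G,L(\omega_6+\omega_8)) = 0$ are carried out in the proof by feeding those composition series into short exact sequences and the resulting long exact sequences in cohomology, together with the known values of $\opH^1(G,L(-))$ from \cite[Theorem~1.2.3]{UGA:2011}.
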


In Section \ref{subsection:additionalcalculations} we consider the second cohomology groups $\opH^2(\Gfq,L(\lambda))$ for the dominant weights listed in Table \ref{table:problemweights}, and obtain the following theorem:

\begin{theorem} \label{theorem:otherH2calculations}
Suppose $p$ and $q$ satisfy the conditions listed in Table \ref{table:pandq}, and let $\lambda$ be one of the weights listed in Table \ref{table:problemweights}. Assume also that the following conditions hold:
\begin{itemize}
\item If $\Phi$ is of type $A_n$ and $\lambda = \wtalpha$, assume that $p$ does not divide $n+1$.
\item If $\Phi$ is of type $B_n$ and $\lambda = \wtalpha$, assume that $p$ does not divide $n-1$.
\item If $\Phi$ is of type $C_n$ and $\lambda = \alpha_0$, assume that $p$ does not divide $n$.
\item If $\Phi$ is of type $A_3$ or $B_2$, assume that $q > 5$.
\end{itemize}
Then $\opH^2(\Gfq,L(\lambda)) = 0$ if $\lambda = \alpha_0 \neq \wtalpha$, and is one-dimensional otherwise.
\end{theorem}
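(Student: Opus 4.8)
The plan is to prove Theorem~\ref{theorem:otherH2calculations} by going through the weights of Table~\ref{table:problemweights}, in each case combining the explicit computation of $\Ext^1_{U_r}(k,L(\lambda))$ from Section~\ref{section:Urcohomology} with the comparison machinery of Sections~\ref{section:comparingalgebraicandfinite} and~\ref{subsection:alternatecalculation}, and then feeding in the relevant rational cohomology of $G$. First I would split the weights into two families. Family one consists of the highest short roots $\alpha_0$ occurring in the non-simply-laced types ($B_n$, $C_n$, $F_4$, $G_2$), where $\alpha_0\neq\wtalpha$ and the assertion is $\opH^2(\Gfq,L(\alpha_0))=0$. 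Family two consists of everything else in the table -- the highest long roots $\wtalpha$ in all types, together with the two fundamental weights of $A_2$ at $q=5$ -- where the assertion is $\opH^2(\Gfq,L(\lambda))\cong k$. In every case Section~\ref{section:Urcohomology} supplies the semisimplicity of $\Ext^1_{U_r}(k,L(\lambda))$ over $B/U_r$, so the first hypothesis of Theorem~\ref{theorem:vanishingconditionforExt1iso} holds; the reason the remaining hypotheses of Theorems~\ref{theorem:vanishingconditionforExt1iso} and~\ref{theorem:Ext2Gfqiso} fail for these weights (which is why they are excluded from Theorem~\ref{theorem:Ext2restrictioniso}) is that one of the $T$-weights $\nu$ of $\Ext^1_{U_r}(k,L(\lambda))$ is a nonzero multiple of $q-1$, so that $\Ext^1_{U_r}(k,L(\lambda))^{\Tfq}\supsetneq\Ext^1_{U_r}(k,L(\lambda))^{T}$, and/or such a $\nu$ violates the bound $p^r>\max\{-(\nu,\gamma^\vee)\}$. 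Pinning down these offending weights $\nu$ explicitly, from the structure of $L(\lambda)$ and the description of $U_r$-cohomology in Section~\ref{section:Urcohomology}, is the first step.

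Once the obstruction is isolated I would invoke the characterization of $\opH^2(\Gfq,L(\lambda))$ from Section~\ref{subsection:alternatecalculation}, which describes it in terms of the rational cohomology groups $\opH^1(G,L(\lambda))$ and $\opH^2(G,L(\lambda))$ together with rational cohomology of $G$ with coefficients in modules built from $\Ext^1_{U_r}(k,L(\lambda))$ and $\Ext^2_{U_r}(k,L(\lambda))$. Under the hypotheses of Table~\ref{table:pandq} and the stated divisibility conditions -- which are exactly what force $L(\lambda)$ to be the expected module (for instance $L(\wtalpha)=\mathfrak{sl}_{n+1}$ rather than $\mathfrak{psl}_{n+1}$ in type $A_n$, and similarly in types $B_n$ and $C_n$) and hence to have a good filtration -- the groups $\opH^1(G,L(\lambda))$ and $\opH^2(G,L(\lambda))$ both vanish, since a module with a good filtration has no higher rational cohomology. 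Thus $\opH^2(\Gfq,L(\lambda))$ is computed entirely by the auxiliary terms coming from $\Ext^\bullet_{U_r}(k,L(\lambda))$, which I would evaluate using the explicit semisimple structure from Section~\ref{section:Urcohomology} together with Kempf vanishing, Bott's theorem, and the linkage principle on $G/B$. The output should be that for $\lambda=\alpha_0$ a highest short root all of these auxiliary terms vanish, whereas for $\lambda=\wtalpha$ a single one-dimensional class survives; this asymmetry between the long-root and short-root representations is the crux of the theorem.

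It remains to handle the low-$q$ exceptions. The two weights of $A_2$ at $q=5$ lie in the table precisely because at $q-1=4$ a weight of $\Ext^1_{U_r}(k,L(\omega_i))$ becomes a nonzero multiple of $q-1$, contributing an extra one-dimensional space absent for other $q$; and the hypothesis $q>5$ for types $A_3$ and $B_2$ excludes an analogous coincidence at $q=5$ that would alter the answer. Each of these is disposed of by running the same argument with the weight bookkeeping carried out by hand. Assembling the two families together with the exceptional cases yields $\opH^2(\Gfq,L(\lambda))=0$ when $\lambda=\alpha_0\neq\wtalpha$ and $\opH^2(\Gfq,L(\lambda))\cong k$ otherwise.

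The step I expect to be the main obstacle is the evaluation of the auxiliary rational cohomology groups. This requires knowing $\Ext^1_{U_r}(k,L(\lambda))$ -- and in some cases $\Ext^2_{U_r}(k,L(\lambda))$ -- not merely up to semisimplicity but with all of its $T$-weights and their multiplicities, and then it requires computing groups of the form $\opH^j(G,\ind_B^G(-))$, equivalently sheaf cohomology of line bundles on $G/B$, for the rather large weights that appear, where the linkage principle must be applied with care. It is precisely the fine dependence of this computation on $p$ and $q$ that accounts for the side conditions in the statement: the divisibility hypotheses on $n+1$, $n-1$, and $n$, and the exclusion of $q=5$ in types $A_3$ and $B_2$.
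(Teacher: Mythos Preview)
Your overall strategy---use Theorem~\ref{theorem:alternatecalculation} with $\sigma=\lambda$, verify its hypotheses, and then compute the resulting rational $\Ext^2$-group---matches the paper's. But you have misidentified where the obstruction lies. By Lemma~\ref{lemma:Ext1weights}, the hypotheses on $\Ext^1_{U_r}(k,L(\lambda))$ in Theorems~\ref{theorem:vanishingconditionforExt1iso} and~\ref{theorem:Ext2Gfqiso} \emph{are} satisfied for dominant roots: there is no nonzero $\Tfq$-invariant weight in $\Ext^1_{U_r}(k,L(\lambda))$, and the bound on $p^r$ holds. The obstruction that excludes these $\lambda$ from Theorem~\ref{theorem:Ext2restrictioniso} is entirely in $\Ext^2_{U_r}(k,L(\lambda))$: the weight $(p^r-1)\lambda$ occurs there with multiplicity one (Remarks~\ref{remark:nonzeroTfpweight} and~\ref{remark:nonzeroTfqweight}), and this is the unique nonzero $\Tfq$-invariant weight. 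Your ``first step'' of pinning down offending weights in $\Ext^1_{U_r}$ would therefore come up empty and leave you stuck.

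Once you relocate the obstruction to $\Ext^2_{U_r}$, the computation proceeds: Theorem~\ref{theorem:alternatecalculation} with $\sigma=\lambda$ gives $\opH^2(\Gfq,L(\lambda))\cong\Ext^2_G(V(\lambda)^{(r)},L(\lambda)\otimes H^0(\lambda))$, which by Remark~\ref{remark:boundonExt2dimension} identifies with $\Hom_{B/B_r}(V(\lambda)^{(r)},\Ext^2_{B_r}(k,L(\lambda)\otimes\lambda))$ and is at most one-dimensional. The paper then uses the spectral sequences \eqref{eq:U1Liealgspecseq} and \eqref{eq:LHSforUrTfqinvariants} to show that this Hom-space is $\Hom_G(V(\lambda),\ind_B^G\fu^*)\cong\Hom_G(V(\lambda),H^0(\wtalpha))$, which is zero if $\lambda=\alpha_0\neq\wtalpha$ and one-dimensional if $\lambda=\wtalpha$. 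Your account of the divisibility hypotheses is also only partially correct: the condition $p\nmid(n+1)$ in type $A_n$ does ensure $L(\wtalpha)=H^0(\wtalpha)$, but the condition $p\nmid(n-1)$ in type $B_n$ is not about the structure of $L(\wtalpha)$ at all---it is Lemma~\ref{lemma:typeBlinkage}, needed so that $\alpha_0$ is not linked to $\wtalpha$ and hence $S_{<\lambda}$ is at most the trivial module, which is required for hypothesis~(c) of Theorem~\ref{theorem:alternatecalculation}.
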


Under the assumptions of Theorem \ref{theorem:otherH2calculations} one has $L(\lambda) = H^0(\lambda)$ for each weight in Table \ref{table:problemweights}, and hence $\opH^2(G,L(\lambda)) = \Ext_G^2(V(0),H^0(\lambda)) = 0$. Thus, each nonzero cohomology group in Theorem \ref{theorem:otherH2calculations} provides an example for which $\opH^2(\Gfq,L(\lambda)) \not\cong \opH^2(G,L(\lambda))$. The nonzero second cohomology computed by Theorem \ref{theorem:otherH2calculations} for type $A_2$ when $q=5$ was previously observed by Bell \cite{Bell:1978}, but the other nonzero second cohomology groups found by Theorem \ref{theorem:otherH2calculations} appear to be new. For $\lambda = \wtalpha$, the assumptions on $p$ imply that $L(\wtalpha)$ is the adjoint representation $\g$. Then Theorem \ref{theorem:otherH2calculations} is related, via generic cohomology, to McNinch's results for $\opH^2(G,\g^{(d)})$ \cite[Theorem C]{McNinch:2002}.

Finally, in Section \ref{section:typeC} we summarize our partial results for $\opH^2(G,L(\lambda))$ when the underlying root system is of type $C_n$ and $\lambda$ is a fundamental dominant weight in the root lattice (i.e., a fundamental dominant weight indexed by an even integer). Our results make heavy use of Adamovich's \cite{Adamovich:1986} combinatorial description, as presented in the paper of Kleshchev and Sheth \cite{Kleshchev:1999}, for the submodule lattice of Weyl modules over the symplectic group having fundamental highest weight. Assuming the conditions on $p$ and $q$ in Table \ref{table:pandq}, our analysis for type $C_n$ provides for $p=5$ a large number of additional examples of nonzero second cohomology groups for finite groups of Lie type. It is interesting that for every example we have been able to explicitly compute (in any Lie type), the dimension of $\opH^2(\Gfq,L(\lambda))$ is  always at most one. Some open questions and directions for future research are discussed in Section \ref{subsection:openquestions}.

\section{Comparing algebraic and finite group cohomology} \label{section:comparingalgebraicandfinite}

We begin our investigation of finite group cohomology by considering the exact induction functor $\cgr(-) = \ind_{\Gfq}^G(-)$ from the category of $k\Gfq$-modules to the category of rational $G$-modules. Given an irreducible $G$-module $L(\lambda)$, our goal is to establish sufficient conditions under which the restriction map $\opH^i(G,L(\lambda)) \rightarrow \opH^i(\Gfq,L(\lambda))$ is an isomorphism. The functor $\cgr(-)$ enables us to reduce this problem to one of showing that certain rational cohomology groups for $G$ are zero. A truncated version of the functor $\cgr(-)$ was first introduced by Bendel, Nakano, and Pillen in \cite{Bendel:2001}, where it was used to compare the cohomology theories for the full algebraic group $G$, the finite group $\Gfq$, and the infinitesimal Frobenius kernel $G_r$. The functor $\cgr(-)$ was also employed in \cite{Bendel:2002,Bendel:2004a} to investigate self-extensions and first cohomology, and more recently in \cite{Bendel:2011a} to locate the first non-trivial cohomology class in the cohomology ring $\Hbul(\Gfq,k)$. 

\subsection{The long exact sequence for restriction} \label{subsection:longexactsequence}

Set $\cgr(-) = \ind_{\Gfq}^G(-)$. Let $\iota: k \rightarrow \cgr(k)$ be the homomorphism induced by Frobenius reciprocity from the identity map $\id: k \rightarrow k$, and set $N = \coker(\iota)$. Then there exists a short exact sequence of $G$-modules
\begin{equation} \label{eq:Grkses}
0 \rightarrow k \stackrel{\iota}{\rightarrow} \cgr(k) \rightarrow N \rightarrow 0.
\end{equation}
Let $M$ be a rational $G$-module. By the tensor identity \cite[I.3.6]{Jantzen:2003}, $M \otimes \cgr(k) \cong \cgr(M)$. Then applying the exact functor $M \otimes -$ to \eqref{eq:Grkses}, one obtains the new short exact sequence
\begin{equation} \label{eq:sesM}
0 \rightarrow M \rightarrow \cgr(M) \rightarrow M \otimes N \rightarrow 0,
\end{equation}
and hence the associated long exact sequence in cohomology
\begin{equation} \label{eq:lesM}
\begin{array}{cclclclcc}
0 & \longrightarrow & \Hom_G(k,M) & \longrightarrow & \Hom_G(k,\cgr(M)) &\longrightarrow & \Hom_G(k,M \otimes N) &&\\
& \longrightarrow & \Ext_G^1(k,M) & \longrightarrow & \Ext_G^1(k,\cgr(M)) & \longrightarrow & \Ext_G^1(k,M \otimes N) &&\\
& \longrightarrow & \Ext_G^2(k,M) & \longrightarrow & \Ext_G^2(k,\cgr(M)) & \longrightarrow & \Ext_G^2(k,M \otimes N) & \longrightarrow & \cdots \\
\end{array}
\end{equation}
Since the subgroup $\Gfq$ of $G$ is finite, the quotient $G/\Gfq$ is affine, so the induction functor $\cgr(-)$ is exact by \cite[I.5.13]{Jantzen:2003}. Then by generalized Frobenius reciprocity \cite[I.4.6]{Jantzen:2003}, there exists for each $i \geq 0$ an isomorphism
\begin{equation} \label{eq:generalizedfrobenius}
\Ext_G^i(k,\cgr(M)) \cong \Ext_{\Gfq}^i(k,M).
\end{equation}
Explicitly, the isomorphism \eqref{eq:generalizedfrobenius} is realized by the composition
\[
\Ext_G^i(k,\cgr(M)) \rightarrow \Ext_{\Gfq}^i(k,\cgr(M)) \rightarrow \Ext_{\Gfq}^i(k,M),
\]
where the first arrow is the restriction map induced by the inclusion $\Gfq \subset G$, and the second arrow is induced by the evaluation homomorphism $\ev: \cgr(M) \rightarrow M$, which is a homomorphism of $\Gfq$-modules. Since the map $M \rightarrow \cgr(M)$ in \eqref{eq:sesM} is induced by Frobenius reciprocity from the identity $\id: M \rightarrow M$, it then follows from the explicit description of \eqref{eq:generalizedfrobenius} that the maps $\Ext_G^i(k,M) \rightarrow \Ext_G^i(k,\cgr(M))$ in \eqref{eq:lesM} identify with the cohomological restriction maps
\[
\res: \Ext_G^i(k,M) \rightarrow \Ext_{\Gfq}^i(k,M).
\]
Then the long exact sequence \eqref{eq:lesM} may be rewritten in the form
\begin{equation} \label{eq:lesrestriction}
\begin{array}{cclclclcc}
0 & \longrightarrow & \Hom_G(k,M) & \stackrel{\res}{\longrightarrow} & \Hom_{\Gfq}(k,M) &\longrightarrow & \Hom_G(k,M \otimes N) &&\\
& \longrightarrow & \Ext_G^1(k,M) & \stackrel{\res}{\longrightarrow} & \Ext_{\Gfq}^1(k,M) & \longrightarrow & \Ext_G^1(k,M \otimes N) &&\\
& \longrightarrow & \Ext_G^2(k,M) & \stackrel{\res}{\longrightarrow} & \Ext_{\Gfq}^2(k,M) & \longrightarrow & \Ext_G^2(k,M \otimes N) & \longrightarrow & \cdots \\
\end{array}
\end{equation}
In particular, the restriction map $\Ext_G^i(k,M) \rightarrow \Ext_{\Gfq}^i(k,M)$ is an isomorphism whenever both $\Ext_G^{i-1}(k,M \otimes N) = 0$ and $\Ext_G^{i}(k,M \otimes N) = 0$.

\subsection{Analyzing terms in the long exact sequence}

The coordinate ring $k[G]$ is naturally a $G \times G$-module via the left and right regular actions, respectively. Let $\phi: G \rightarrow G \times G$ be the group homomorphism defined by $\phi(g) = (g,F^r(g))$, and let $k[G]^\vee$ denote the $G$-module obtained by restricting the $G \times G$-action on $k[G]$ to $G$ via $\phi$. In \cite[Proposition 2.4]{Bendel:2011a} it is shown that $\cgr(k) \cong k[G]^\vee$ as $G$-modules, and hence that $\cgr(k)$ admits a filtration by $G$-submodules with sections of the form $H^0(\mu) \otimes H^0(\mu^*)^{(r)}$ for $\mu \in X(T)_+$, with each section occurring exactly once. Then the module $M \otimes N$ appearing in \eqref{eq:sesM} admits a filtration with sections of the form $M \otimes H^0(\mu) \otimes H^0(\mu^*)^{(r)}$ for $0 \neq \mu \in X(T)_+$, each occurring once. It follows that if
\begin{equation} \label{eq:vanishingcondition}
\Ext_G^i(k,M \otimes H^0(\mu) \otimes H^0(\mu^*)^{(r)}) = 0 \quad \text{for all $0 \neq \mu \in X(T)_+$,}
\end{equation}
then $\Ext_G^i(k,M \otimes N) =0$. Given a module $M$, we will be interested in values for $r$ and $i$ such that the condition \eqref{eq:vanishingcondition} is satisfied. To analyze the $\Ext$-group in \eqref{eq:vanishingcondition}, we will consider the Lyndon--Hochschild--Serre (LHS) spectral sequence
\begin{align*}
E_2^{i,j} &= \Ext_{G/G_r}^i(k,\Ext_{G_r}^j(k,M \otimes H^0(\mu) \otimes H^0(\mu^*)^{(r)})) \\
&\Rightarrow \Ext_G^{i+j}(k,M \otimes H^0(\mu) \otimes H^0(\mu^*)^{(r)}),
\end{align*}
which can be rewritten as
\begin{equation} \label{eq:LHSforvanishing}
E_2^{i,j} = \Ext_{G/G_r}^i(V(\mu)^{(r)},\Ext_{G_r}^j(k,M \otimes H^0(\mu))) \Rightarrow \Ext_G^{i+j}(V(\mu)^{(r)},M \otimes H^0(\mu)),
\end{equation}
and the associated five-term exact sequence
\begin{equation} \label{eq:LHSforvanishing5term}
0 \rightarrow E_2^{1,0} \rightarrow \Ext_G^1(V(\mu)^{(r)},M \otimes H^0(\mu)) \rightarrow E_2^{0,1} \rightarrow E_2^{2,0} \rightarrow \Ext_G^2(V(\mu)^{(r)},M \otimes H^0(\mu)).
\end{equation}

\begin{lemma} \label{lemma:E2i0vanishing}
Let $\mu \in X(T)_+$, let $\lambda \in X_r(T)$, and set $M = L(\lambda)$. Then in the spectral sequence \eqref{eq:LHSforvanishing}, $E_2^{i,0} = 0$ for all $i \geq 1$. If also $\mu \neq 0$, then $E_2^{0,0} = 0$, and
\[
\Hom_G(k,L(\lambda) \otimes H^0(\mu) \otimes H^0(\mu^{*})^{(r)}) = 0.
\]
\end{lemma}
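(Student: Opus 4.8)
The plan is to evaluate the bottom edge of the spectral sequence \eqref{eq:LHSforvanishing} explicitly. Since the Frobenius twist $M\mapsto M^{(r)}$ is an equivalence from the category of rational $G$-modules onto the category of rational $G/G_r$-modules, one has
\[
E_2^{i,0} = \Ext_{G/G_r}^i\!\left(V(\mu)^{(r)},(L(\lambda)\otimes H^0(\mu))^{G_r}\right) = \Ext_G^i(V(\mu),Q),
\]
where $Q$ denotes the rational $G$-module determined by $Q^{(r)} = (L(\lambda)\otimes H^0(\mu))^{G_r}$. Moreover, since $H^0(\mu^*)\cong V(\mu)^*$, the space $\Hom_G(k,L(\lambda)\otimes H^0(\mu)\otimes H^0(\mu^*)^{(r)})$ is canonically $\Hom_G(V(\mu)^{(r)},L(\lambda)\otimes H^0(\mu))$, which is precisely the corner term $E_\infty^{0,0}=E_2^{0,0}$ of \eqref{eq:LHSforvanishing}. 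Thus all three assertions of the lemma will follow once I show that $\Ext_G^{\,i}(V(\mu),Q)=0$ for $i\geq 1$ and that $\Hom_G(V(\mu),Q)=0$ when $\mu\neq 0$.

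The key step is to identify $Q$ as a $G$-module. Using the adjunction $(L(\lambda)\otimes H^0(\mu))^{G_r}=\Hom_{G_r}(L(\lambda)^*,H^0(\mu))=\Hom_{G_r}(L(\lambda^*),H^0(\mu))$, where $\lambda^*=-w_0\lambda$, I would invoke the hypothesis $\lambda\in X_r(T)$ — which guarantees both $\lambda^*\in X_r(T)$ and that $L(\lambda^*)$ remains irreducible on restriction to $G_r$ — together with the standard description of the $G_r$-module structure of $H^0(\mu)$ (cf.\ \cite{Jantzen:2003}). This yields that $\Hom_{G_r}(L(\lambda^*),H^0(\mu))$ vanishes unless $\mu\equiv\lambda^*\pmod{p^rX(T)}$, and that when $\mu=\lambda^*+p^r\mu_1$ — in which case $\mu_1\in X(T)_+$ automatically, since $\langle\mu-\lambda^*,\alpha^\vee\rangle$ lies in $p^r\Z$ and exceeds $-p^r$ for every simple root $\alpha$ — one has $(L(\lambda)\otimes H^0(\mu))^{G_r}\cong H^0(\mu_1)^{(r)}$. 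Hence $Q$ is either $0$ or $H^0(\mu_1)$.

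Granting this identification, the conclusion is immediate from the orthogonality relations $\Ext_G^{\,i}(V(\nu),H^0(\eta))=\delta_{i,0}\,\delta_{\nu,\eta}\,k$. In either case $\Ext_G^{\,i}(V(\mu),Q)=0$ for $i\geq 1$, giving the first assertion. When $\mu\neq 0$, a nonzero class in $\Hom_G(V(\mu),Q)$ would force $Q=H^0(\mu_1)$ with $\mu_1=\mu$, i.e.\ $\lambda^*=(1-p^r)\mu$; this is impossible, since $\lambda^*$ is dominant while $(1-p^r)\mu$ is a nonzero antidominant weight. Therefore $E_2^{0,0}=\Hom_G(V(\mu),Q)=0$, and hence also $\Hom_G(k,L(\lambda)\otimes H^0(\mu)\otimes H^0(\mu^*)^{(r)})=0$, as noted above.

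I expect the only genuine obstacle to be the middle step: pinning down $(L(\lambda)\otimes H^0(\mu))^{G_r}$ precisely as a $G/G_r$-module rather than merely as a graded vector space. A naive weight count does not suffice — it only shows that the weights of $Q$ are bounded above by $(\lambda+\mu)/p^r$, which need not be $\leq\mu$ — so the argument really does depend on the $G_r$-socle structure of $H^0(\mu)$ and on the irreducibility of $L(\lambda)|_{G_r}$, which is exactly where the restriction $\lambda\in X_r(T)$ enters.
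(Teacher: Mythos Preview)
Your argument is correct and follows the same overall strategy as the paper: identify $(L(\lambda)\otimes H^0(\mu))^{G_r}$ as either $0$ or $H^0(\mu_1)^{(r)}$ (with $\mu=\mu_0+p^r\mu_1$), then finish with the orthogonality $\Ext_G^i(V(\mu),H^0(\mu_1))=\delta_{i,0}\delta_{\mu,\mu_1}k$ and the observation that $\mu=\mu_1$ forces $\mu=0$.

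The only substantive difference is how the ``middle step'' is carried out. You invoke the $G_r$-socle structure of $H^0(\mu)$ as a black box and flag it as the one place requiring care. The paper instead makes this step completely explicit by passing to $B_r$: using \cite[I.6.12]{Jantzen:2003} and the tensor identity one has
\[
\Hom_{G_r}(k,L(\lambda)\otimes H^0(\mu))\;\cong\;\ind_{B/B_r}^{G/G_r}\Hom_{B_r}(k,L(\lambda)\otimes\mu),
\]
and the $B_r$-Hom is computed directly from the fact that $L(\lambda)$ is generated over $B_r$ by its highest weight vector: it is $0$ unless $\mu_0=-w_0\lambda$, and is the one-dimensional module $p^r\mu_1$ in that case. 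Inducing up gives exactly $H^0(\mu_1)^{(r)}$. This buys you a self-contained proof of the step you were worried about, with no appeal to the $G_r$-socle of $H^0(\mu)$ as a separate input; conversely, your formulation makes the role of the hypothesis $\lambda\in X_r(T)$ (irreducibility of $L(\lambda)|_{G_r}$) more transparent.
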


\begin{proof}
It follows from \cite[I.6.12]{Jantzen:2003} and the tensor identity that $E_2^{i,0}$ may be rewritten as
\begin{align*}
E_2^{i,0} &= \Ext_{G/G_r}^i(V(\mu)^{(r)},\Hom_{G_r}(k,L(\lambda) \otimes H^0(\mu))) \\
& \cong \Ext_{G/G_r}^i(V(\mu)^{(r)},\ind_{B/B_r}^{G/G_r} \Hom_{B_r}(k,L(\lambda) \otimes \mu)).
\end{align*}
Write $\mu = \mu_0 + p^r \mu_1$ with $\mu_0 \in X_r(T)$ and $\mu_1 \in X(T)_+$. Since $\lambda \in X_r(T)$, there exists a $B/B_r$-module isomorphism
\[
\Hom_{B_r}(k,L(\lambda) \otimes \mu) \cong \begin{cases} 0 & \text{if } \mu_0 \neq -w_0\lambda, \\ p^r\mu_1 & \text{if } \mu_0 = -w_0\lambda. \end{cases}
\]
Suppose $\mu_0 = -w_0\lambda$. Then
\begin{equation} \label{eq:untwistExt}
\begin{split}
E_2^{i,0} \cong \Ext_{G/G_r}^i(V(\mu)^{(r)},\ind_{B/B_r}^{G/G_r} p^r\mu_1) &\cong \Ext_{G/G_r}^i(V(\mu)^{(r)},H^0(\mu_1)^{(r)}) \\
&\cong \Ext_G^i(V(\mu),H^0(\mu_1)).
\end{split}
\end{equation}
The last term in \eqref{eq:untwistExt} is nonzero only if $i=0$ and $\mu = \mu_1$ by \cite[II.4.13]{Jantzen:2003}, so $E_2^{i,0}=0$ for all $i \geq 1$. This proves the first statement of the lemma. Now suppose $\mu = \mu_1$. Then $\mu_0 = \mu - p^r\mu_1 = -(p^r-1)\mu$. Since $\mu,\mu_0 \in X(T)_+$, this is possible only if $\mu = \mu_0 = 0$. Thus, if $\mu \neq 0$, then $E_2^{0,0}=0$. Since $E_2^{0,0} \cong E_\infty^{0,0} \cong \Hom_G(k,L(\lambda) \otimes H^0(\mu) \otimes H^0(\mu^{*})^{(r)})$, this proves the lemma.
\end{proof}

We can now provide a new proof of a well-known result on the restriction map for first cohomology, which was first proved by Cline, Parshall, Scott, and van der Kallen:

\begin{corollary} \textup{(cf.\ \cite[Theorem 7.4]{Cline:1977a})} \label{corollary:CPSK7.4}
Let $\lambda \in X_r(T)$. Then the restriction map
\[
\res: \opH^1(G,L(\lambda)) \rightarrow \opH^1(\Gfq,L(\lambda))
\]
is injective.
\end{corollary}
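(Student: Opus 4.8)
The plan is to read off the claim directly from the long exact sequence \eqref{eq:lesrestriction} together with Lemma \ref{lemma:E2i0vanishing}. Taking $M = L(\lambda)$ in \eqref{eq:lesrestriction}, the restriction map $\res\colon \opH^1(G,L(\lambda)) \to \opH^1(\Gfq,L(\lambda))$ is the map $\Ext_G^1(k,L(\lambda)) \to \Ext_{\Gfq}^1(k,L(\lambda))$, and its kernel is the image of the preceding arrow $\Hom_{\Gfq}(k,L(\lambda)) \to \Hom_G(k, L(\lambda)\otimes N)$. Hence it suffices to prove $\Hom_G(k, L(\lambda) \otimes N) = 0$.

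First I would invoke the filtration of $L(\lambda)\otimes N$ by $G$-submodules with sections of the form $L(\lambda) \otimes H^0(\mu) \otimes H^0(\mu^*)^{(r)}$ for $0 \neq \mu \in X(T)_+$, as recalled just before \eqref{eq:vanishingcondition}. Since $\Hom_G(k,-)$ is left exact, $\Hom_G(k, L(\lambda)\otimes N)$ embeds into the direct sum (or rather, is controlled by a long exact sequence built from) the groups $\Hom_G(k, L(\lambda)\otimes H^0(\mu)\otimes H^0(\mu^*)^{(r)})$ over the nonzero dominant $\mu$ appearing in the filtration. More precisely, a standard induction on the length of the filtration, using left-exactness of $\Hom_G(k,-)$ at each step, reduces the vanishing of $\Hom_G(k, L(\lambda)\otimes N)$ to the vanishing of $\Hom_G(k, L(\lambda)\otimes H^0(\mu)\otimes H^0(\mu^*)^{(r)})$ for every $0 \neq \mu \in X(T)_+$.

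But that last vanishing is exactly the second assertion of Lemma \ref{lemma:E2i0vanishing}, which applies since $\lambda \in X_r(T)$ and $\mu \neq 0$. Therefore $\Hom_G(k, L(\lambda)\otimes N) = 0$, and so the connecting-type map $\Hom_{\Gfq}(k,L(\lambda)) \to \Hom_G(k,L(\lambda)\otimes N)$ is zero; equivalently, $\Ext_G^1(k,L(\lambda)) \to \Ext_{\Gfq}^1(k,L(\lambda))$ is injective by exactness of \eqref{eq:lesrestriction}. This is the claimed injectivity of $\res$ on $\opH^1$.

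The only point requiring any care — and the step I expect to be the mild obstacle — is the reduction of the vanishing of $\Hom_G(k, L(\lambda)\otimes N)$ to the vanishing on the individual filtration sections. Since $\Hom_G(k,-)$ need not be exact, one must argue inductively up the filtration: if $0 \to A \to B \to C \to 0$ is exact with $\Hom_G(k,A) = 0$ and $\Hom_G(k,C) = 0$, then $\Hom_G(k,B) = 0$ by left-exactness, and iterating this over the finite filtration of $L(\lambda)\otimes N$ gives the result. (Strictly, $N$ itself is filtered with sections $H^0(\mu)\otimes H^0(\mu^*)^{(r)}$, $0 \neq \mu$, each occurring once; tensoring with $L(\lambda)$ preserves the filtration since tensoring is exact, giving the sections named above.) Everything else is a direct citation of \eqref{eq:lesrestriction} and Lemma \ref{lemma:E2i0vanishing}.
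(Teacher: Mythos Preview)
Your proof is correct and follows exactly the approach of the paper: use Lemma~\ref{lemma:E2i0vanishing} to verify the vanishing condition \eqref{eq:vanishingcondition} for $i=0$, deduce $\Hom_G(k,L(\lambda)\otimes N)=0$ via the filtration on $N$, and read off injectivity from the long exact sequence \eqref{eq:lesrestriction}. Two small infelicities worth cleaning up: the arrow whose image equals $\ker(\res)$ is $\Hom_G(k,L(\lambda)\otimes N)\to\Ext_G^1(k,L(\lambda))$, not the one you named (though your conclusion is unaffected since you show the source vanishes); and the filtration on $N$ is infinite, not finite, so your induction should be phrased either via direct limits or by noting that any nonzero element of $\Hom_G(k,L(\lambda)\otimes N)$ lands in a finite-stage submodule.
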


\begin{proof}
By Lemma \ref{lemma:E2i0vanishing}, the vanishing condition \eqref{eq:vanishingcondition} is satisfied for $i=0$ and $M = L(\lambda)$. Then $\Hom_G(k,L(\lambda) \otimes N) = 0$ in the long exact sequence \eqref{eq:lesrestriction}, which implies the result.
\end{proof}

\subsection{The restriction map for first cohomology} \label{subsection:restrictionmapforH1}

Fix $\lambda \in X_r(T)$, and set $M = L(\lambda)$. In this section we investigate conditions under which the term $\Ext_G^1(k,M \otimes N)$ in \eqref{eq:lesrestriction} is zero, and hence the restriction map $\opH^1(G,L(\lambda)) \rightarrow \opH^1(\Gfq,L(\lambda))$ is an isomorphism.

\begin{lemma} \label{lemma:ext1isotohom}
Let $\mu \in X(T)_+$ and $\lambda \in X_r(T)$. Then
\[
\Ext_G^1(V(\mu)^{(r)},L(\lambda) \otimes H^0(\mu)) \cong \Hom_{G/G_r}(V(\mu)^{(r)},\Ext_{G_r}^1(k,L(\lambda) \otimes H^0(\mu))).
\]
\end{lemma}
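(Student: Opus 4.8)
The plan is to read the isomorphism directly off the Lyndon--Hochschild--Serre spectral sequence \eqref{eq:LHSforvanishing}, taken with $M = L(\lambda)$, using the vanishing supplied by Lemma \ref{lemma:E2i0vanishing}. The key tool is the associated five-term exact sequence \eqref{eq:LHSforvanishing5term}:
\[
0 \rightarrow E_2^{1,0} \rightarrow \Ext_G^1(V(\mu)^{(r)},L(\lambda) \otimes H^0(\mu)) \rightarrow E_2^{0,1} \rightarrow E_2^{2,0} \rightarrow \Ext_G^2(V(\mu)^{(r)},L(\lambda) \otimes H^0(\mu)).
\]

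First I would invoke Lemma \ref{lemma:E2i0vanishing}, which (using the hypothesis $\lambda \in X_r(T)$) gives $E_2^{i,0} = 0$ for all $i \geq 1$; in particular $E_2^{1,0} = E_2^{2,0} = 0$. Substituting these vanishings into the five-term sequence collapses it to an isomorphism $\Ext_G^1(V(\mu)^{(r)},L(\lambda)\otimes H^0(\mu)) \cong E_2^{0,1}$. Then I would unwind the definition of that $E_2$-term,
\[
E_2^{0,1} = \Ext_{G/G_r}^0(V(\mu)^{(r)},\Ext_{G_r}^1(k,L(\lambda)\otimes H^0(\mu))) = \Hom_{G/G_r}(V(\mu)^{(r)},\Ext_{G_r}^1(k,L(\lambda)\otimes H^0(\mu))),
\]
which is precisely the right-hand side of the claimed isomorphism; composing the two identifications finishes the proof.

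Since all the necessary ingredients have been assembled in the preceding subsections, there is essentially no obstacle here: the content of the lemma lies entirely in the bookkeeping of the spectral sequence, and the only delicate point is ensuring that Lemma \ref{lemma:E2i0vanishing} applies, which is why the hypothesis $\lambda \in X_r(T)$ must be carried through.
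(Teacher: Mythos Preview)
Your proof is correct and follows exactly the same approach as the paper: invoke Lemma~\ref{lemma:E2i0vanishing} to obtain $E_2^{1,0}=E_2^{2,0}=0$, and then let the five-term exact sequence \eqref{eq:LHSforvanishing5term} collapse to the desired isomorphism. The paper's proof is simply a terser version of yours.
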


\begin{proof}
By Lemma \ref{lemma:E2i0vanishing}, the terms $E_2^{1,0}$ and $E_2^{2,0}$ in the spectral sequence \eqref{eq:LHSforvanishing} with $M = L(\lambda)$ are zero. Then the five-term exact sequence \eqref{eq:LHSforvanishing5term} collapses to yield the stated isomorphism.
\end{proof}

\begin{lemma} \label{lemma:goodfiltrationforExt1}
Let $\mu \in X(T)_+$ and $\lambda \in X_r(T)$. Suppose $\Ext_{U_r}^1(k,L(\lambda))$ is semisimple as a $B/U_r$-module. Then $\Ext_{G_r}^1(k,L(\lambda) \otimes H^0(\mu))^{(-r)}$ is isomorphic to a direct sum of induced modules. In particular, $\Ext_{G_r}^1(k,L(\lambda) \otimes H^0(\mu))^{(-r)}$ admits a good filtration.
\end{lemma}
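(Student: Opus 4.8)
The plan is to compute $\Ext_{G_r}^1(k,L(\lambda)\otimes H^0(\mu))$ by first passing to the Borel and then applying the known structure of $\Ext_{U_r}^1$. First I would use that $G_r/U_r \cong (G/U)_r$ is infinitesimal and that $\Ext_{U_r}^1(k,-)$ carries a natural $G_r$-action, so that the LHS spectral sequence for $U_r \trianglelefteq B_r$ (or the standard identification $\Ext_{B_r}^\bullet(k,-)$ via the $T_r$-fixed points, since $B_r/U_r$ is linearly reductive) gives $\Ext_{B_r}^1(k,L(\lambda)\otimes\mu) \cong \bigl(\Ext_{U_r}^1(k,L(\lambda))\otimes\mu\bigr)^{U_r\text{-triv part}}$ with its residual $B/B_r$-action. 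More importantly, I would invoke the generalized tensor identity / Kempf-type vanishing to reduce the $G_r$-computation to a $B_r$-computation: by \cite[I.6.12]{Jantzen:2003} and the tensor identity one has, as $B/B_r$-modules,
\[
\Ext_{G_r}^1(k,L(\lambda)\otimes H^0(\mu)) \cong \ind_{B/B_r}^{G/G_r}\Ext_{B_r}^1(k,L(\lambda)\otimes\mu),
\]
provided the relevant higher $R^i\ind$ vanish — and here is where the semisimplicity hypothesis does its first job.

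Second, I would exploit the hypothesis that $\Ext_{U_r}^1(k,L(\lambda))$ is semisimple as a $B/U_r$-module. A semisimple $B/U_r \cong (B/U) \times (U/U_r)$-module on which $U/U_r$ (being unipotent) must act trivially is really a semisimple $T$-module pulled back through $B \twoheadrightarrow T$, i.e. a direct sum of one-dimensional $B$-modules $k_\nu$ with $\nu$ ranging over a multiset of weights. Twisting by $\mu$ and taking the $U_r$-trivial (equivalently, here, $B_r$-relevant) piece, $\Ext_{B_r}^1(k,L(\lambda)\otimes\mu)$ is again a direct sum of one-dimensional $B/B_r$-modules, each of the form $p^r\eta$ for suitable $\eta \in X(T)$ (the weights that survive are exactly those $\nu+\mu$ lying in $p^rX(T)$, as in the computation in the proof of Lemma \ref{lemma:E2i0vanishing}). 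Applying $\ind_{B/B_r}^{G/G_r}$ to a one-dimensional module $p^r\eta$ gives $H^0(\eta)^{(r)}$, and crucially $R^i\ind$ of a one-dimensional $B/B_r$-module vanishes for $i>0$ (Kempf vanishing applied on $G/G_r \cong (G/B)$-bundle picture, or directly \cite[II.4]{Jantzen:2003} after the $(-r)$-untwist), which justifies the displayed induction formula above and shows
\[
\Ext_{G_r}^1(k,L(\lambda)\otimes H^0(\mu))^{(-r)} \cong \bigoplus H^0(\eta)
\]
is a direct sum of induced (dual Weyl) modules. Since modules with a good filtration are closed under direct sums, this is in particular a good filtration, proving the "in particular" clause.

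I expect the main obstacle to be the bookkeeping around the two Frobenius twists and making the $B/B_r$-module identifications precise — in particular verifying that the surviving weights really do lie in $p^rX(T)$ so that the $(-r)$-untwist makes sense, and confirming that one genuinely has a \emph{direct sum} of induced modules rather than merely a good filtration (the former requires the semisimplicity, not just a good filtration, on the $U_r$-cohomology). A secondary subtlety is justifying the vanishing of the higher derived induction uniformly: this should follow because after the untwist each summand is a one-dimensional $B$-module, for which $R^{i}\ind_B^G = 0$ for $i>0$ is classical, but I would want to state clearly that the $G_r$-cohomology group, as a $G/G_r$-module, is obtained by this twisted induction with no higher obstruction precisely because the input $B/B_r$-module is semisimple and hence a sum of such lines. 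Everything else is a routine assembly from Lemma \ref{lemma:E2i0vanishing}'s techniques and \cite{Jantzen:2003}.
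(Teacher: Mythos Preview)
Your approach is the same as the paper's, and the heart of the argument---semisimplicity of $\Ext_{U_r}^1(k,L(\lambda))$ forces $\Ext_{B_r}^1(k,L(\lambda)\otimes\mu)$ to be a direct sum of one-dimensional $B/B_r$-modules $p^r\xi_i$, whence induction gives $\bigoplus H^0(\xi_i)$---is correct. Two points of bookkeeping are off, however. First, the isomorphism
\[
\Ext_{G_r}^1(k,L(\lambda)\otimes H^0(\mu)) \cong \ind_{B/B_r}^{G/G_r}\Ext_{B_r}^1(k,L(\lambda)\otimes\mu)
\]
is \cite[II.12.8]{Jantzen:2003}, not \cite[I.6.12]{Jantzen:2003}, and it holds unconditionally here; it does \emph{not} require semisimplicity, nor does it require higher $R^i\ind$ to vanish on $\Ext_{B_r}^1$. (If one argues instead via the spectral sequence \cite[II.12.2]{Jantzen:2003}, what is needed is $R^1\ind$ of $\Hom_{B_r}(k,L(\lambda)\otimes\mu)$, which vanishes by Kempf since that Hom is either zero or $p^r\mu_1$ with $\mu_1\in X(T)_+$, exactly as in Lemma~\ref{lemma:E2i0vanishing}.) Second, your assertion that $R^i\ind$ of an arbitrary one-dimensional $B/B_r$-module vanishes for $i>0$ is false---Kempf's theorem applies only to dominant weights---but fortunately you do not actually need it: once the displayed isomorphism is in hand, applying $\ind_{B/B_r}^{G/G_r}$ to $\bigoplus p^r\xi_i$ and untwisting gives $\bigoplus H^0(\xi_i)$ regardless of whether the $\xi_i$ are dominant (nondominant summands are simply zero). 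So the semisimplicity hypothesis has exactly one job, not two: it makes $\Ext_{B_r}^1$ a sum of lines.
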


\begin{proof}
By \cite[II.12.8]{Jantzen:2003}, there exists a natural isomorphism of $G/G_r$-modules
\[
\Ext_{G_r}^1(k,L(\lambda) \otimes H^0(\mu)) \cong \ind_{B/B_r}^{G/G_r} \Ext_{B_r}^1(k,L(\lambda) \otimes \mu).
\]
Also, $\Ext_{B_r}^1(k,L(\lambda) \otimes \mu) \cong \Ext_{U_r}^1(k,L(\lambda) \otimes \mu)^{T_r} \cong (\Ext_{U_r}^1(k,L(\lambda)) \otimes \mu)^{T_r}$ by \cite[I.6.9]{Jantzen:2003}. By hypothesis, $\Ext_{U_r}^1(k,L(\lambda))$ is semisimple for $B/U_r$. Then there exist weights $\nu_1,\ldots,\nu_n \in X(T)$ such that $\Ext_{U_r}^1(k,L(\lambda)) \cong \bigoplus_{i=1}^n \nu_n$ as a $B/U_r$-module. If $\Ext_{B_r}^1(k,L(\lambda) \otimes \mu) \neq 0$, then there exists $\nu \in \set{\nu_1,\ldots,\nu_n}$ such that $\nu + \mu \in p^rX(T)$. Relabeling the $\nu_i$ if necessary, we may assume that $\nu_i+\mu \in p^rX(T)$ for $1 \leq i \leq m$, where $m = \dim \Ext_{B_r}^1(k,L(\lambda) \otimes \mu)$, and $\nu_i+\mu \notin p^rX(T)$ for $m < i \leq n$. For $1 \leq i \leq m$, write $\nu_i + \mu = p^r \xi_i$ with $\xi_i \in X(T)$. Then $\Ext_{B_r}^1(k,L(\lambda) \otimes \mu) \cong \bigoplus_{i=1}^m p^r \xi_i$ as a $B/B_r$-module, and
\[
\textstyle \Ext_{G_r}^1(k,L(\lambda) \otimes H^0(\mu))^{(-r)} \cong \bigoplus_{i=1}^m (\ind_{B/B_r}^{G/G_r} p^r \xi_i)^{(-r)} \cong \bigoplus_{i=1}^m H^0(\xi_i). \qedhere
\]
\end{proof}

\subsection{Proof of Theorem \ref{theorem:vanishingconditionforExt1iso}} \label{subsection:proofofvanishingconditionforExt1iso}

We can now prove Theorem \ref{theorem:vanishingconditionforExt1iso}.

\begin{proof}[Proof of Theorem \ref{theorem:vanishingconditionforExt1iso}]
It suffices by Lemma \ref{lemma:ext1isotohom} to show for all $\mu \in X(T)_+$ with $\mu \neq 0$ that
\begin{equation} \label{eq:HomExt1vanishing}
\Hom_{G/G_r}(V(\mu)^{(r)},\Ext_{G_r}^1(k,L(\lambda) \otimes H^0(\mu))) = 0.
\end{equation}
So let $\mu \in X(T)_+$ with $\mu \neq 0$. By \cite[II.4.13]{Jantzen:2003}, and retaining the notation from the proof of Lemma \ref{lemma:goodfiltrationforExt1}, the $\Hom$-set in \eqref{eq:HomExt1vanishing} is nonzero only if $\mu = \xi_i$ for some $1 \leq i \leq m$. If $\mu = \xi_i$, then $\nu_i = p^r\xi_i - \mu = (p^r-1)\mu$ is a nonzero $\Tfq$-invariant weight in $\Ext_{U_r}^1(k,L(\lambda))$, a contradiction, because by assumption no such weights exist. Thus we conclude that \eqref{eq:HomExt1vanishing} holds.
\end{proof}

\subsection{The restriction map for second cohomology} \label{subsection:restrictionmapforH2}

So far we have studied the third and sixth terms in the long exact sequence \eqref{eq:lesrestriction} with $M = L(\lambda)$ and $\lambda \in X_r(T)$. Now we study the term $\Ext_G^2(k,M \otimes N)$, and conditions under which the restriction map $\opH^2(G,L(\lambda)) \rightarrow \opH^2(\Gfq,L(\lambda))$ is an isomorphism.

\begin{lemma} \label{lemma:Ext2toHomiso}
Let $\mu \in X(T)_+$, let $\lambda \in X_r(T)$, and set $M = L(\lambda)$. Suppose $\Ext_{U_r}^1(k,L(\lambda))$ is semisimple as a $B/U_r$-module. Then in \eqref{eq:LHSforvanishing}, $E_2^{i,1} = 0$ for all $i \geq 1$. In particular,
\[
\Ext_G^2(V(\mu)^{(r)}, L(\lambda) \otimes H^0(\mu)) \cong \Hom_{G/G_r}(V(\mu)^{(r)},\Ext_{G_r}^2(k,L(\lambda) \otimes H^0(\mu))).
\]
\end{lemma}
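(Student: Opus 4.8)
The plan is to reduce the vanishing of $E_2^{i,1}$ for $i \geq 1$ to the standard $\Ext$-vanishing between Weyl modules and modules admitting a good filtration, using the semisimplicity hypothesis only through Lemma~\ref{lemma:goodfiltrationforExt1}; the displayed isomorphism will then fall out of a degeneration argument in the spectral sequence \eqref{eq:LHSforvanishing}. Concretely, I would first invoke Lemma~\ref{lemma:goodfiltrationforExt1}: since $\Ext_{U_r}^1(k,L(\lambda))$ is semisimple as a $B/U_r$-module, the module $\Ext_{G_r}^1(k,L(\lambda) \otimes H^0(\mu))^{(-r)}$ is isomorphic to a finite direct sum of induced modules $H^0(\xi_j)$. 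Using the identification of $G/G_r$ with $G$ afforded by the $r$-th Frobenius morphism (under which $V(\mu)^{(r)}$ corresponds to the Weyl module $V(\mu)$), this gives, for every $i \geq 0$,
\[
E_2^{i,1} = \Ext_{G/G_r}^i(V(\mu)^{(r)},\Ext_{G_r}^1(k,L(\lambda) \otimes H^0(\mu))) \cong \bigoplus_j \Ext_G^i(V(\mu),H^0(\xi_j)).
\]
By \cite[II.4.13]{Jantzen:2003} each summand on the right vanishes for $i \geq 1$, so $E_2^{i,1} = 0$ for all $i \geq 1$, which is the first assertion.

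For the displayed isomorphism I would track the contributions to total degree $2$ in \eqref{eq:LHSforvanishing}. Lemma~\ref{lemma:E2i0vanishing} gives $E_2^{i,0} = 0$ for all $i \geq 1$, and we have just shown $E_2^{i,1} = 0$ for all $i \geq 1$; consequently $E_\infty^{2,0}$ and $E_\infty^{1,1}$, being subquotients of $E_2^{2,0} = 0$ and $E_2^{1,1} = 0$, both vanish, and so $\Ext_G^2(V(\mu)^{(r)},L(\lambda) \otimes H^0(\mu)) \cong E_\infty^{0,2}$. Finally $E_\infty^{0,2} = E_2^{0,2}$: in this first-quadrant spectral sequence no nonzero differential maps into the $(0,2)$ spot, the differential $d_2 : E_2^{0,2} \rightarrow E_2^{2,1}$ has zero target by the first part of the lemma, and $d_3 : E_3^{0,2} \rightarrow E_3^{3,0}$ has target a subquotient of $E_2^{3,0} = 0$ by Lemma~\ref{lemma:E2i0vanishing}. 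Combining these observations yields
\[
\Ext_G^2(V(\mu)^{(r)},L(\lambda) \otimes H^0(\mu)) \cong E_2^{0,2} = \Hom_{G/G_r}(V(\mu)^{(r)},\Ext_{G_r}^2(k,L(\lambda) \otimes H^0(\mu))),
\]
as desired.

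I expect no serious obstacle: the entire substantive input is Lemma~\ref{lemma:goodfiltrationforExt1} (equivalently, the semisimplicity hypothesis), which converts $\Ext_{G_r}^1$ into a direct sum of induced modules, and once that is available the vanishing of the $E_2^{i,1}$ and the degeneration giving the edge isomorphism are purely formal. The only point that warrants a moment's care is the Frobenius untwisting used above, namely that $\Ext_{G/G_r}^i(V(\mu)^{(r)},-)$ coincides with $\Ext_G^i(V(\mu),(-)^{(-r)})$ on $G/G_r$-modules lying in the image of the $r$-th Frobenius twist, so that \cite[II.4.13]{Jantzen:2003} applies directly; this is routine and is already implicit in the proof of Lemma~\ref{lemma:goodfiltrationforExt1}.
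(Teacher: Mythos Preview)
Your proposal is correct and follows essentially the same approach as the paper: both invoke Lemma~\ref{lemma:goodfiltrationforExt1} to identify $\Ext_{G_r}^1(k,L(\lambda)\otimes H^0(\mu))^{(-r)}$ as a direct sum of induced modules, deduce $E_2^{i,1}=0$ for $i\geq 1$ from the standard $\Ext$-vanishing between Weyl and induced modules, and then combine this with Lemma~\ref{lemma:E2i0vanishing} to collapse the spectral sequence in total degree~$2$. The only cosmetic difference is that the paper cites \cite[II.4.16]{Jantzen:2003} (good-filtration acyclicity) rather than \cite[II.4.13]{Jantzen:2003} applied summand by summand, and records the degeneration in one line rather than tracking the individual differentials as you do.
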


\begin{proof}
First, $E_2^{i,1} \cong \Ext_G^i(V(\mu),\Ext_{G_r}^1(k,L(\lambda) \otimes H^0(\mu))^{(-r)})=0$ for all $i \geq 1$ by Lemma \ref{lemma:goodfiltrationforExt1} and \cite[II.4.16]{Jantzen:2003}. Next, $E_2^{i,0} = 0$ for all $i \geq 1$ by Lemma \ref{lemma:E2i0vanishing}. It follows then that
\[
E_2^{0,2} \cong E_\infty^{0,2} \cong \Ext_G^2(V(\mu)^{(r)}, L(\lambda) \otimes H^0(\mu)),
\]
whence the isomorphism of the lemma.
\end{proof}

\begin{lemma} \label{lemma:HomtoExt2vanish}
Let $\mu \in X(T)_+$ with $\mu \neq 0$, and let $\lambda \in X_r(T)$. Suppose that $\Ext_{U_r}^2(k,L(\lambda))^{\Tfq} = \Ext_{U_r}^2(k,L(\lambda))^T$, and that $p^r > \max \{-(\nu,\gamma^\vee): \gamma \in \Delta, \nu \in X(T),\Ext_{U_r}^1(k,L(\lambda))_\nu \neq 0 \}$. Then
\begin{equation} \label{eq:HomtoExt2vanish}
\Hom_{G/G_r}(V(\mu)^{(r)},\Ext_{G_r}^2(k,L(\lambda) \otimes H^0(\mu))) = 0.
\end{equation}
\end{lemma}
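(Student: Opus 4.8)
The plan is to mimic the structure of the proof of Theorem~\ref{theorem:vanishingconditionforExt1iso}, but now using a spectral sequence or dimension-shifting argument that reduces $\Ext_{G_r}^2(k,L(\lambda)\otimes H^0(\mu))$ to data about $\Ext_{U_r}^j(k,L(\lambda))$ for $j \in \{1,2\}$. First I would use \cite[II.12.8]{Jantzen:2003} (together with the tensor identity) to write
\[
\Ext_{G_r}^2(k,L(\lambda) \otimes H^0(\mu)) \cong \ind_{B/B_r}^{G/G_r} \Ext_{B_r}^2(k,L(\lambda)\otimes\mu),
\]
provided the right-hand $\Ext_{B_r}$ behaves well, or—if the direct induction statement fails in degree $2$—invoke the LHS spectral sequence for $U_r \triangleleft B_r$ to relate $\Ext_{B_r}^2$ to $\Ext_{U_r}^2(k,L(\lambda))^{T_r}$ and $\Ext_{U_r}^1(k,L(\lambda))^{T_r}$ twisted by $T_r$-weights. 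By \cite[II.4.13]{Jantzen:2003} (Frobenius reciprocity for the $(-r)$-untwist composed with $\ind_{B/B_r}^{G/G_r}$), the $\Hom$-space in \eqref{eq:HomtoExt2vanish} is nonzero only if the weight $\mu$ appears in the socle of some induced module built from the $B/B_r$-weights of $\Ext_{B_r}^2(k,L(\lambda)\otimes\mu)$ after untwisting; concretely, one needs a $T$-weight $\nu$ of $\Ext_{U_r}^j(k,L(\lambda))$ (for $j=1$ or $2$) with $\nu + \mu \in p^r X(T)$ and with the resulting $H^0(\xi)$ (where $p^r\xi = \nu+\mu$) having $V(\mu)^{(r)}$ mapping nontrivially into it, which forces $\xi = \mu$, i.e. $\nu = (p^r-1)\mu$.

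The $j=2$ contribution is then killed exactly as in Theorem~\ref{theorem:vanishingconditionforExt1iso}: such a $\nu = (p^r-1)\mu$ with $\mu \neq 0$ dominant would be a nonzero $T$-weight of $\Ext_{U_r}^2(k,L(\lambda))$ that is $\Tfq$-fixed (since $(p^r-1)\mu$ is divisible by $p^r-1 = q-1$ on each coordinate against simple coroots, hence trivial on $\Tfq$), and such weights are assumed not to exist by the hypothesis $\Ext_{U_r}^2(k,L(\lambda))^{\Tfq} = \Ext_{U_r}^2(k,L(\lambda))^T$. The genuinely new ingredient is the $j=1$ contribution coming from the second page of the LHS spectral sequence for $1 \to U_r \to B_r \to T_r \to 1$ (or equivalently the correction term when passing from $\Ext_{U_r}^2$ to $\Ext_{B_r}^2$): here a weight $\nu$ of $\Ext_{U_r}^1(k,L(\lambda))$ gets shifted by a contribution from $\opH^1$ of $T_r$ or from a connecting map in the Frobenius-kernel spectral sequence $G_r/U_r \cong B_r$-type analysis, producing weights of the form $\nu + p^r\gamma$ or $\nu - \alpha + (\text{something})$ for $\gamma$ ranging over simple roots. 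This is where the numerical hypothesis $p^r > \max\{-(\nu,\gamma^\vee): \gamma\in\Delta,\ \Ext_{U_r}^1(k,L(\lambda))_\nu \neq 0\}$ enters: it guarantees that any such shifted weight, when set equal to $(p^r-1)\mu$ with $\mu$ dominant and nonzero, leads to a contradiction on pairing against the relevant simple coroot $\gamma^\vee$—the inequality $-(\nu,\gamma^\vee) < p^r$ forces $(\nu + p^r\gamma, \gamma^\vee)$ or the analogous expression into a range incompatible with being $(p^r-1)(\mu,\gamma^\vee)$ for a dominant $\mu$.

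I expect the main obstacle to be bookkeeping the precise form of the extra $j=1$ weights: one must identify exactly which spectral sequence (for $U_r \triangleleft B_r$, or for $B_r$ inside $G_r$ via the Andersen–Jantzen type computation of $\Ext_{B_r}^\bullet$) produces the degree-$2$ classes, and track how the $T_r$-cohomology factor $\opH^\bullet(T_r,-)$—which contributes $p^r$-divisible weight shifts—interacts with the $U_r$-weights. Once the list of candidate weights $\nu'$ with $\Ext_{B_r}^2(k,L(\lambda)\otimes\mu)_{\nu'} \neq 0$ is pinned down (each being $\nu$ or $\nu \pm$ a $p^r$-divisible or simple-root shift of a $j\le 2$ weight), the rest is a routine case check: the $\Tfq$-hypotheses in degrees $1$ and $2$ handle the unshifted and $p^r$-shifted cases, and the coroot bound handles the simple-root-shifted cases. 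I would organize the writeup by first stating the isomorphism identifying $\Ext_{G_r}^2(k,L(\lambda)\otimes H^0(\mu))^{(-r)}$ with a sum of $H^0(\xi)$'s (or a two-step filtration thereof), then reducing \eqref{eq:HomtoExt2vanish} to the nonexistence of a weight $\nu$ with $\nu = (p^r-1)\mu$ of the appropriate type, and finally dispatching the three cases.
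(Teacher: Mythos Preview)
Your overall shape is right --- reduce to a statement about $B_r$-cohomology via an induction isomorphism, then use Frobenius reciprocity and the $\Tfq$-hypothesis on $\Ext_{U_r}^2$ to kill the resulting $\Hom$-space --- but you have misidentified the spectral sequence that needs collapsing and hence the role of the numerical hypothesis on $p^r$.

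The passage from $\Ext_{U_r}^2$ to $\Ext_{B_r}^2$ involves \emph{no} correction: since $B_r/U_r \cong T_r$ is diagonalizable, its higher cohomology vanishes and the LHS spectral sequence for $U_r \triangleleft B_r$ degenerates immediately, giving $\Ext_{B_r}^j(k,L(\lambda)\otimes\mu) \cong (\Ext_{U_r}^j(k,L(\lambda))\otimes\mu)^{T_r}$ for every $j$. So there is no ``$j=1$ contribution'' to $\Ext_{B_r}^2$ of the sort you describe, and no simple-root shifts enter the weight list there. The actual obstruction to writing $\Ext_{G_r}^2(k,L(\lambda)\otimes H^0(\mu)) \cong \ind_{B/B_r}^{G/G_r}\Ext_{B_r}^2(k,L(\lambda)\otimes\mu)$ lives in the Grothendieck spectral sequence of \cite[II.12.2]{Jantzen:2003},
\[
E_2^{i,j} = R^i\ind_{B/B_r}^{G/G_r}\Ext_{B_r}^j(k,L(\lambda)\otimes\mu) \Rightarrow \Ext_{G_r}^{i+j}(k,L(\lambda)\otimes H^0(\mu)),
\]
and the terms to kill are $E_2^{i,0}$ and $E_2^{i,1}$ for $i\geq 1$. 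The $E_2^{i,0}$ vanishing is easy (the $B/B_r$-module is either zero or one-dimensional of dominant weight $p^r\mu_1$, so Kempf applies). The $p^r$ bound is used precisely to handle $E_2^{i,1}$: every weight of $\Ext_{B_r}^1(k,L(\lambda)\otimes\mu)$ has the form $p^r\xi = \nu+\mu$ with $\nu$ a weight of $\Ext_{U_r}^1(k,L(\lambda))$, and the inequality $p^r(\xi,\gamma^\vee) = (\mu,\gamma^\vee) + (\nu,\gamma^\vee) \geq 0 - (p^r-1)$ forces $(\xi,\gamma^\vee)\geq 0$, so every such $\xi$ is dominant and Kempf's vanishing theorem gives $R^i\ind = 0$ for $i\geq 1$. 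Only then do you get the induction isomorphism in degree $2$, after which Frobenius reciprocity plus the observation that $V(\mu)^{(r)}$ is $B$-generated by its $p^r\mu$-weight space reduces everything to the $\Tfq$-hypothesis on $\Ext_{U_r}^2$, exactly as you sketched.
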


\begin{proof}
By \cite[II.12.2]{Jantzen:2003}, there exists a spectral sequence of $G$-modules with
\begin{equation} \label{eq:inversionspecseq}
E_2^{i,j} = R^i \ind_{B/B_r}^{G/G_r} \Ext_{B_r}^j(k,L(\lambda) \otimes \mu) \Rightarrow \Ext_{G_r}^{i+j}(k,L(\lambda) \otimes H^0(\mu)).
\end{equation}
We claim for all $i \geq 1$ that $E_2^{i,0} = E_2^{i,1} = 0$. Indeed, write $\mu = \mu_0+p^r\mu_1$ with $\mu_0 \in X_r(T)$ and $\mu_1 \in X(T)_+$. Then as argued in the proof of Lemma \ref{lemma:E2i0vanishing}, $\Hom_{B_r}(k,L(\lambda) \otimes \mu)$ is either the zero module or a one-dimensional $B/B_r$-module of weight $p^r \mu_1$. If the former, then $E_2^{i,0}=0$ for all $i\geq 0$, and if the latter, then $E_2^{i,0}=0$ for all $i \geq 1$ by Kempf's vanishing theorem. Next suppose $\Ext_{B_r}^1(k,L(\lambda) \otimes \mu) \neq 0$. As in the proof of Lemma \ref{lemma:goodfiltrationforExt1}, the weights of $\Ext_{B_r}^1(k,L(\lambda) \otimes \mu)$ are elements of $p^r X(T)$ of the form $\nu + \mu$ for $\nu$ a weight of $\Ext_{U_r}^1(k,L(\lambda))$. So let $\nu$ be a weight of $\Ext_{U_r}^1(k,L(\lambda))$, and suppose $\nu + \mu = p^r \xi$ for some $\xi \in X(T)$. Then for all $\gamma \in \Delta$,
\[
p^r (\xi,\gamma^\vee)-(\nu,\gamma^\vee) = (\mu,\gamma^\vee) \geq 0.
\]
By the assumption on $p^r$, this implies for all $\gamma \in \Delta$ that $(\xi,\gamma^\vee) \geq 0$. Then every weight of $\Ext_{B_r}^1(k,L(\lambda) \otimes \mu)$ is dominant, so it follows from Kempf's vanishing theorem that $E_2^{i,1} = 0$ for all $i \geq 1$. This finishes the verification of the claim.

Since $E_2^{i,0} = E_2^{i,1} = 0$ for all $i \geq 1$, we obtain from \eqref{eq:inversionspecseq} the $G$-module isomorphism
\begin{equation} \label{eq:GrExtasinduction}
\Ext_{G_r}^2(k,L(\lambda) \otimes H^0(\mu)) \cong \ind_{B/B_r}^{G/G_r} \Ext_{B_r}^2(k,L(\lambda) \otimes \mu),
\end{equation}
and hence by Frobenius Reciprocity the isomorphism
\[
\Hom_{G/G_r}(V(\mu)^{(r)},\Ext_{G_r}^2(k,L(\lambda) \otimes H^0(\mu))) \cong \Hom_{B/B_r}(V(\mu)^{(r)},\Ext_{B_r}^2(k,L(\lambda) \otimes \mu)).
\]
If the latter space is nonzero, then $p^r\mu$ must be a weight of $\Ext_{B_r}^2(k,L(\lambda) \otimes \mu)$, because $V(\mu)$ is generated as a $B$-module by its $\mu$-weight space. So suppose $p^r \mu$ is a weight of $\Ext_{B_r}^2(k,L(\lambda) \otimes \mu)$. Then as in the previous paragraph, there exists a weight $\nu$ of $\Ext_{U_r}^2(k,L(\lambda))$ such that $\nu + \mu = p^r \mu$, and hence $\nu = (p^r-1)\mu$ is a nonzero $\Tfq$-invariant weight in $\Ext_{U_r}^2(k,L(\lambda))$, a contradiction. Thus, the $\Hom$-set is zero and \eqref{eq:HomtoExt2vanish} holds.
\end{proof}

\begin{remark} \label{remark:boundonExt2dimension}
Retain the assumptions on $\mu$, $\lambda$, and $p^r$ from Lemma \ref{lemma:HomtoExt2vanish}. Then the proof of the lemma shows that each weight of $\Ext_{B_r}^1(k,L(\lambda) \otimes \mu)$ is dominant. Next suppose in addition that $\Ext_{U_r}^1(k,L(\lambda))$ is semisimple as a $B/U_r$-module. Then Lemma \ref{lemma:Ext2toHomiso} and the proof of Lemma \ref{lemma:HomtoExt2vanish} show that
\begin{align*}
\dim \Ext_G^2(V(\mu)^{(r)},L(\lambda) \otimes H^0(\mu)) &= \dim \Hom_{B/B_r}(V(\mu)^{(r)},\Ext_{B_r}^2(k,L(\lambda) \otimes \mu)) \\
&\leq \dim \Ext_{U_r}^2(k,L(\lambda))_{(p^r-1)\mu}.
\end{align*}
\end{remark}

\subsection{Proof of Theorem \ref{theorem:Ext2Gfqiso}} \label{subsection:proofofExt2Gfqiso}

We now prove Theorem \ref{theorem:Ext2Gfqiso}.

\begin{proof}[Proof of Theorem \ref{theorem:Ext2Gfqiso}]
By Theorem  \ref{theorem:vanishingconditionforExt1iso} and Lemmas \ref{lemma:Ext2toHomiso} and \ref{lemma:HomtoExt2vanish}, the vanishing condition \eqref{eq:vanishingcondition} is satisfied for $M=L(\lambda)$ and $i \in \set{1,2}$. Then the terms $\Ext_G^1(k,L(\lambda) \otimes N)$ and $\Ext_G^2(k,L(\lambda) \otimes N)$ in \eqref{eq:lesrestriction} are both zero, so the restriction maps $\opH^2(G,L(\lambda)) \rightarrow \opH^2(\Gfq,L(\lambda))$ and $\opH^3(G,L(\lambda)) \rightarrow \opH^3(\Gfq,L(\lambda))$ are an isomorphism and an injection, respectively.
\end{proof}

\begin{remark}
If we drop the assumption $\Ext_{U_r}^1(k,L(\lambda))^{\Tfq} = \Ext_{U_r}^1(k,L(\lambda))^T$ in Theorem \ref{theorem:Ext2Gfqiso}, then we still get that the restriction maps 
$\opH^2(G,L(\lambda)) \rightarrow \opH^2(\Gfq,L(\lambda))$ and $\opH^3(G,L(\lambda)) \rightarrow \opH^3(\Gfq,L(\lambda))$ are surjective and injective, respectively.
\end{remark}

\subsection{An alternate description of second cohomology} \label{subsection:alternatecalculation}

Even in cases where the hypotheses of Theorem \ref{theorem:Ext2Gfqiso} do not hold, we can, under suitable conditions, still describe the second cohomology group $\opH^2(\Gfq,L(\lambda))$ in terms of rational cohomology for the full algebraic group $G$. In particular, this approach enables us in Section \ref{subsection:additionalcalculations} to compute the cohomology group $\opH^2(\Gfq,L(\lambda))$ in cases where it is not isomorphic to $\opH^2(G,L(\lambda))$. For this approach we utilize constructions and techniques developed in \cite[\S\S 2.7--2.8]{Bendel:2011a}. Specifically, given $\sigma\in X(T)_{+}$, there exist submodules $S_{< \sigma}$ (resp.\ $S_{\leq \sigma}$) and quotients  $Q_{\nless \sigma}$ (resp.\ $Q_{\nleq \sigma}$) of ${\mathcal G}_{r}(k)$ with the following properties: 

\begin{enumroman}
\item $S_{< \sigma}$  (resp.\ $S_{\leq \sigma}$) has a filtration with factors of the form $H^0(\nu)\otimes H^0(\nu^*)^{(r)}$, where $\nu < \sigma$ (resp.\ $\nu \leq \sigma$) and $\nu $ is linked to $\sigma$.
\item $Q_{\nless \sigma}$  (resp.\ $Q_{\nleq \sigma}$) has a filtration with factors of the form $H^0(\nu)\otimes H^0(\nu^*)^{(r)}$, where $\nu \nless \sigma$ (resp.\ $\nu \nleq \sigma$) or $\nu $ is not linked to $\sigma$.
\item Each such factor occurs in the aforementioned filtrations with multiplicity one.
\item There exists a short exact sequence of $G$-modules
\[
0 \to H^{0}(\sigma)\otimes H^{0}(\sigma^{*})^{(r)} \rightarrow Q_{\nless \sigma} \to Q_{\nleq\sigma} \to 0.
\] 
\item There exists a short exact sequence of $G$-modules
\[
0 \to S_{< \sigma} \to {\mathcal G}_{r}(k) \to Q_{\nless \sigma} \to 0.
\] 
\item There exists a short exact sequence of $G$-modules
\[
0 \to S_{\leq \sigma} \to {\mathcal G}_{r}(k)  \to Q_{\nleq \sigma} \to 0.
\]
\end{enumroman}

With these submodules and quotients we prove the following theorem:
 
\begin{theorem} \label{theorem:alternatecalculation}
Let $\lambda \in X(T)_+$ satisfy the following properties: 
\begin{enumalph} 
\item $L(\lambda)=H^{0}(\lambda)$;
\item There exists $\sigma\in X(T)_{+}$ such that for all $\mu \neq \sigma$, $\Ext^{2}_{G}(k,H^{0}(\mu)\otimes H^{0}(\mu^{*})^{(r)}\otimes L(\lambda)) = 0$;
\item $\Ext^{3}_{G}(k,S_{<\sigma}\otimes L(\lambda))=0$; and
\item $\Ext^{1}_{G}(k,Q_{\nleq \sigma}\otimes L(\lambda))=0$.
\end{enumalph}
Then $\opH^{2}(\Gfq,L(\lambda))\cong \Ext_G^2(k,H^{0}(\sigma)\otimes H^{0}(\sigma^{*})^{(r)}\otimes L(\lambda))$. 
\end{theorem}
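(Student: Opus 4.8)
The plan is to pass from finite-group cohomology to rational cohomology and then to isolate, inside $\cG_r(k)\otimes L(\lambda)$, the contribution of the weight $\sigma$. By generalized Frobenius reciprocity \eqref{eq:generalizedfrobenius} together with the tensor identity $\cG_r(L(\lambda))\cong L(\lambda)\otimes\cG_r(k)$, one has
\[
\opH^2(\Gfq,L(\lambda)) \cong \Ext_G^2(k,\cG_r(L(\lambda))) \cong \Ext_G^2(k,\cG_r(k)\otimes L(\lambda)),
\]
so it suffices to prove $\Ext_G^2(k,\cG_r(k)\otimes L(\lambda)) \cong \Ext_G^2(k,H^0(\sigma)\otimes H^0(\sigma^*)^{(r)}\otimes L(\lambda))$. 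I would obtain this by peeling off the submodule $S_{<\sigma}$ using the short exact sequence (v), and then the quotient $Q_{\nleq\sigma}$ using (iv).

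The preliminary observation is a dévissage driven by hypothesis (b). Every section of the filtration of $S_{<\sigma}$ has the form $H^0(\nu)\otimes H^0(\nu^*)^{(r)}$ with $\nu<\sigma$, and every section of the filtration of $Q_{\nleq\sigma}$ has the form $H^0(\nu)\otimes H^0(\nu^*)^{(r)}$ with $\nu\nleq\sigma$ or with $\nu$ not linked to $\sigma$; in all of these cases $\nu\neq\sigma$, using for $Q_{\nleq\sigma}$ that $\sigma\leq\sigma$ and $\sigma$ is linked to itself. Tensoring with $L(\lambda)$ and inducting along the filtrations via the short exact sequences $0\to M_{i-1}\to M_i\to M_i/M_{i-1}\to 0$, hypothesis (b) then yields
\[
\Ext_G^2(k,S_{<\sigma}\otimes L(\lambda)) = 0 = \Ext_G^2(k,Q_{\nleq\sigma}\otimes L(\lambda)).
\]

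With this in hand I would apply $\Ext_G^\bullet(k,-\otimes L(\lambda))$ to the short exact sequence (v). The resulting long exact sequence contains the segment
\[
\Ext_G^2(k,S_{<\sigma}\otimes L(\lambda)) \to \Ext_G^2(k,\cG_r(k)\otimes L(\lambda)) \to \Ext_G^2(k,Q_{\nless\sigma}\otimes L(\lambda)) \to \Ext_G^3(k,S_{<\sigma}\otimes L(\lambda)),
\]
whose left term vanishes by the dévissage and whose right term vanishes by hypothesis (c), so $\Ext_G^2(k,\cG_r(k)\otimes L(\lambda))\cong\Ext_G^2(k,Q_{\nless\sigma}\otimes L(\lambda))$. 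Likewise, applying $\Ext_G^\bullet(k,-\otimes L(\lambda))$ to (iv) produces the segment
\[
\Ext_G^1(k,Q_{\nleq\sigma}\otimes L(\lambda)) \to \Ext_G^2(k,H^0(\sigma)\otimes H^0(\sigma^*)^{(r)}\otimes L(\lambda)) \to \Ext_G^2(k,Q_{\nless\sigma}\otimes L(\lambda)) \to \Ext_G^2(k,Q_{\nleq\sigma}\otimes L(\lambda)),
\]
whose left term vanishes by hypothesis (d) and whose right term vanishes by the dévissage, so $\Ext_G^2(k,H^0(\sigma)\otimes H^0(\sigma^*)^{(r)}\otimes L(\lambda))\cong\Ext_G^2(k,Q_{\nless\sigma}\otimes L(\lambda))$. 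Composing the three isomorphisms gives the theorem.

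I do not anticipate a deep obstacle; the step needing the most care is the dévissage for $Q_{\nleq\sigma}$, whose defining filtration is infinite — unlike that of $S_{<\sigma}$, which is finite since $\set{\nu : \nu\leq\sigma}$ is finite — so one must confirm that $\Ext_G^2$-vanishing survives the passage to the colimit, a routine fact that is in any case implicit in the constructions of \cite[\S\S 2.7--2.8]{Bendel:2011a}. Finally, hypothesis (a) is not invoked in the argument above; it is recorded because it forces $\opH^2(G,L(\lambda))=\Ext_G^2(k,H^0(\lambda))=0$, which is what makes the identification of $\opH^2(\Gfq,L(\lambda))$ with the generally nonzero group $\Ext_G^2(k,H^0(\sigma)\otimes H^0(\sigma^*)^{(r)}\otimes L(\lambda))$ useful in the applications.
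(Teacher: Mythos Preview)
Your proof is correct and, for the filtration argument with the short exact sequences (iv) and (v), follows the paper verbatim. The one genuine difference is in the opening move: you pass directly from $\opH^2(\Gfq,L(\lambda))$ to $\Ext_G^2(k,\cG_r(k)\otimes L(\lambda))$ via generalized Frobenius reciprocity \eqref{eq:generalizedfrobenius} and the tensor identity, whereas the paper first uses hypothesis~(a) together with \cite[II.4.13]{Jantzen:2003} and the long exact sequence \eqref{eq:lesrestriction} to obtain $\opH^2(\Gfq,L(\lambda))\cong\Ext_G^2(k,L(\lambda)\otimes N)$, and then uses~(a) once more with the long exact sequence of \eqref{eq:sesM} to reach $\Ext_G^2(k,\cG_r(k)\otimes L(\lambda))$. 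Your route is shorter and shows---as you correctly point out---that hypothesis~(a) is not actually needed for the isomorphism itself; the paper's detour through $N$ is unnecessary here, and (a) serves only the interpretive role you describe. Your caution about the infinite filtration on $Q_{\nleq\sigma}$ is well placed but, as you say, the passage of $\Ext^2$-vanishing to the direct limit is standard.
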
 

\begin{proof}
The assumption $L(\lambda) = H^0(\lambda)$ together with \cite[II.4.13]{Jantzen:2003} and the long exact sequence \eqref{eq:lesrestriction} yield the isomorphism $\opH^2(\Gfq,L(\lambda)) \cong \Ext_G^2(k,L(\lambda) \otimes N)$. The latter space is seen to be isomorphic to $\Ext_G^2(k,L(\lambda) \otimes \cgr(k))$ by considering the long exact sequence in cohomology arising from the short exact sequence \eqref{eq:sesM} with $M = L(\lambda)$. Next, consider the long exact sequence in cohomology associated to the following short exact sequence arising from (v) above:
\[
0 \rightarrow S_{< \sigma}\otimes L(\lambda) \rightarrow {\mathcal G}_{r}(k)\otimes L(\lambda) \rightarrow Q_{\nless \sigma}\otimes L(\lambda) \rightarrow 0.
\]
Assumptions (b) and (c), together with the description in (i) above for the filtration on $S_{<\sigma}$, imply that $\Ext_G^2(k,\cgr(k) \otimes L(\lambda)) \cong \Ext_G^2(k,Q_{\nless \sigma} \otimes L(\lambda))$. Finally, consider the long exact sequence in cohomology associated to the following short exact sequence arising from (iv) above:
\[
0 \rightarrow H^{0}(\sigma)\otimes H^{0}(\sigma^{*})^{(r)} \otimes L(\lambda)\rightarrow Q_{\nless \sigma} \otimes L(\lambda) \rightarrow Q_{\nleq \sigma} \otimes L(\lambda) \rightarrow 0.
\]
Then assumptions (b) and (d) imply the isomorphism
\[
\Ext_G^2(k,Q_{\nless \sigma} \otimes L(\lambda)) \cong \Ext_G^2(k,H^0(\sigma) \otimes H^0(\sigma^*)^{(r)} \otimes L(\lambda)).
\]
Now the conclusion of the theorem follows by applying this sequence of isomorphisms. 
\end{proof}

\section{First cohomology for the Frobenius kernel \texorpdfstring{$U_r$}{Ur}} \label{section:Urcohomology}

In this section we generalize the results of \cite[\S 3]{UGA:2011} to the higher Frobenius kernels $U_r$ of $U$.

\subsection{The socle of \texorpdfstring{$U_r$}{Ur} cohomology}

Our first step is to analyze for $\lambda \in X_r(T)$ the socle of the rational $B/U_r$-module $\Ext_{U_r}^1(L(\lambda),k)$. Recall that the irreducible rational $B/U_r$-modules are one-dimensional, hence are determined by characters in $X(B/U_r) = X(T)$. Given $\sigma \in X(T)$, the dimension of the $(-\sigma)$-isotypic component in the socle of a rational $B/U_r$-module $M$ is equal to $\dim \Hom_{B/U_r}(-\sigma,M)$. Since $\sigma$ can be written uniquely in the form $\sigma = \mu + p^r\nu$ with $\mu \in X_r(T)$ and $\nu \in X(T)$, to determine the socle of $\Ext_{U_r}^1(L(\lambda),k)$ as a $B/U_r$-module it suffices as in \cite[\S 3.1]{UGA:2011} to determine the dimensions of the $\Hom$-spaces
\begin{equation} \label{eq:HomforUrsocle}
\Hom_{B/U_r}(-\mu-p^r\nu,\Ext_{U_r}^1(L(\lambda),k)) \cong \Hom_{B/B_r}(k,\Ext_{B_r}^1(L(\lambda),\mu+p^r\nu)).
\end{equation}

\begin{proposition} \label{proposition:HomforUrsocle}
Suppose $p > 2$. Let $\lambda,\mu \in X_r(T)$ and $\nu \in X(T)$. Then
\[
\Hom_{B/U_r}(-\mu-p^r\nu,\Ext_{U_r}^1(L(\lambda),k)) \cong \begin{cases} \Ext_B^1(L(\lambda),\mu+p^r\nu) & \text{if $\lambda \neq \mu$,} \\ 0 & \text{if $\lambda = \mu$.} \end{cases}
\]
\end{proposition}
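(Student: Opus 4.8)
The plan is to analyze the right-hand side of the isomorphism \eqref{eq:HomforUrsocle}, namely $\Hom_{B/B_r}(k,\Ext_{B_r}^1(L(\lambda),\mu+p^r\nu))$, by splitting off the toral part. First I would use the fact that $B_r = U_r \rtimes T_r$ together with \cite[I.6.9]{Jantzen:2003} (as in the proof of Lemma \ref{lemma:goodfiltrationforExt1}) to identify $\Ext_{B_r}^1(L(\lambda),\mu+p^r\nu)$ with the $T_r$-fixed points of $\Ext_{U_r}^1(L(\lambda),k)\otimes(\mu+p^r\nu)$, and then pass to a Lyndon--Hochschild--Serre spectral sequence for the extension $1 \to B_r \to B \to B/B_r \to 1$, or more directly invoke the description of $\Ext_{B}^1$ in terms of $\Ext_{B_r}^1$ via \cite[II.12.2]{Jantzen:2003}. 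The upshot I am aiming for is that, provided the relevant lower/higher terms vanish, $\Hom_{B/B_r}(k,\Ext_{B_r}^1(L(\lambda),\mu+p^r\nu))$ is isomorphic to $\Ext_B^1(L(\lambda),\mu+p^r\nu)$; this handles the case $\lambda \neq \mu$.

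The key case-distinction is whether $\lambda = \mu$ or not. When $\lambda \neq \mu$, the weights $\lambda$ and $\mu$ are distinct elements of the restricted region $X_r(T)$, so $L(\lambda)$ and $L(\mu)$ are non-isomorphic irreducible $B$-modules; the point is to show the spectral sequence connecting $B_r$-cohomology and $B$-cohomology degenerates enough to give the clean identification with $\Ext_B^1(L(\lambda),\mu+p^r\nu)$. When $\lambda = \mu$, I expect the target to vanish: here one is looking at $\Hom_{B/B_r}(k,\Ext_{B_r}^1(L(\lambda),\lambda+p^r\nu))$, and the relevant $\Ext_{U_r}^1$-weight contributing would force a self-extension-type obstruction. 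Concretely, I would argue that a nonzero class would produce a weight of $\Ext_{U_r}^1(L(\lambda),k)$ of the form $(\text{something in } p^r X(T)) - (\lambda - \lambda) = p^r\nu'$ landing in the wrong place; the hypothesis $p>2$ should rule this out, presumably via the known fact (from \cite[\S 3]{UGA:2011} in the $r=1$ case, here generalized) that $\Ext_{U_r}^1(L(\lambda),k)$ has no $T_r$-trivial weights in its $L(\lambda)$-isotypic part, or via a parity/linkage argument specific to $U_r$ and odd $p$.

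I would carry out the steps in this order: (1) rewrite via \eqref{eq:HomforUrsocle} and the $T_r$-invariants formula; (2) set up the $B_r$-to-$B$ comparison and record which auxiliary $\Ext$-groups must vanish for the five-term sequence to collapse; (3) verify those vanishings using that $L(\lambda)$ is irreducible and $\lambda\in X_r(T)$, together with weight/linkage bookkeeping; (4) in the case $\lambda\neq\mu$, read off the isomorphism with $\Ext_B^1(L(\lambda),\mu+p^r\nu)$; (5) in the case $\lambda=\mu$, show the target is forced to be zero using the structure of $\Ext_{U_r}^1(L(\lambda),k)$ and $p>2$. The main obstacle I anticipate is step (5): separating the $\lambda=\mu$ vanishing from the $\lambda\neq\mu$ identification cleanly, since both come from the same spectral sequence, and making sure the $p>2$ hypothesis is genuinely what kills the diagonal case — this likely requires invoking the explicit first-cohomology computation for $U_1$ (e.g.\ that $\Ext_{U_1}^1(k,k)$ has weights $-\alpha$ for $\alpha$ a simple root when $p>2$, $p\neq 3$ in type $G_2$ notwithstanding) lifted to $U_r$, rather than a purely formal argument.
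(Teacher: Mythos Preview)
Your plan for the case $\lambda \neq \mu$ is essentially the paper's argument: the paper runs the Lyndon--Hochschild--Serre spectral sequence
\[
E_2^{i,j} = \Ext_{B/B_r}^i(k,\Ext_{B_r}^j(L(\lambda),\mu+p^r\nu)) \Rightarrow \Ext_B^{i+j}(L(\lambda),\mu+p^r\nu),
\]
notes that $\Hom_{B_r}(L(\lambda),\mu+p^r\nu) = \Hom_{B_r}(L(\lambda),\mu) \otimes p^r\nu = 0$ because $L(\lambda)$ is generated as a $B_r$-module by its highest weight space \cite[II.3.14]{Jantzen:2003} and $\lambda \neq \mu$ in $X_r(T)$, so $E_2^{1,0} = E_2^{2,0} = 0$ and the five-term sequence collapses to give $E_2^{0,1} \cong \Ext_B^1(L(\lambda),\mu+p^r\nu)$. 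Your steps (2)--(4) are this argument; the vanishing you need in step (3) is exactly the $\Hom_{B_r}$ vanishing just described, not any linkage bookkeeping.

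Where your proposal diverges is the case $\lambda = \mu$, which you flag as ``the main obstacle.'' In the paper this is in fact the \emph{easy} case, dispatched in one line before the spectral sequence is even set up: one has
\[
\Ext_{B_r}^1(L(\lambda),\lambda+p^r\nu) \cong \Ext_{B_r}^1(L(\lambda),\lambda) \otimes p^r\nu = 0
\]
by \cite[II.12.6]{Jantzen:2003}, and then \eqref{eq:HomforUrsocle} immediately gives the vanishing. The hypothesis $p>2$ enters precisely through this citation. Your proposed route---analyzing the $T_r$-trivial weights of $\Ext_{U_r}^1(L(\lambda),k)$ and invoking an explicit description of $\Ext_{U_1}^1(k,k)$ lifted to $U_r$---would be circular here, since the point of the proposition is to serve as input to the computation of $\soc_{B/U_r}\Ext_{U_r}^1(L(\lambda),k)$ in Theorem~\ref{theorem:socleofUrcohomology}. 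So you have the difficulty of the two cases inverted: use the known $B_r$ self-extension vanishing directly for $\lambda=\mu$, and reserve the spectral-sequence collapse for $\lambda\neq\mu$.
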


\begin{proof}
If $\lambda = \mu$, then $\Ext_{B_r}^1(L(\lambda),\lambda+p^r\nu) \cong \Ext_{B_r}^1(L(\lambda),\lambda) \otimes p^r\nu = 0$ by \cite[II.12.6]{Jantzen:2003}, and consequently $\Hom_{B/U_r}(-\mu-p^r\nu,\Ext_{U_r}^1(L(\lambda),k)) = 0$ by \eqref{eq:HomforUrsocle}. So assume $\lambda \neq \mu$, and consider the LHS spectral sequence
\[
E_2^{i,j} = \Ext_{B/B_r}^i(k,\Ext_{B_r}^j(L(\lambda),\mu+p^r\nu)) \Rightarrow \Ext_B^{i+j}(L(\lambda),\mu+p^r\nu).
\]
It gives rise to the five-term exact sequence
\begin{equation} \label{eq:5termforUrsocle}
0 \rightarrow E_2^{1,0} \rightarrow \Ext_B^1(L(\lambda),\mu+p^r\nu) \rightarrow E_2^{0,1} \rightarrow E_2^{2,0} \rightarrow \Ext_B^2(L(\lambda),\mu+p^r\nu).
\end{equation}
One has $\Hom_{B_r}(L(\lambda),\mu+p^r\nu) = \Hom_{B_r}(L(\lambda),\mu) \otimes p^r\nu = 0$ because $\lambda,\mu \in X_r(T)$, $\lambda \neq \mu$, and $L(\lambda)$ is generated as $B_r$-module by its highest weight space (cf.\ \cite[II.3.14]{Jantzen:2003}). Then $E_2^{1,0} = E_2^{2,0} = 0$, so $\Hom_{B/U_r}(-\mu-p^r\nu,\Ext_{U_r}^1(L(\lambda),k)) = E_2^{0,1} \cong \Ext_B^1(L(\lambda),\mu+p^r\nu)$.
\end{proof}

\begin{corollary} \label{corollary:restrictionBtoUr}
Suppose $p > 2$ and let $\lambda \in X_r(T)$. Then the restriction map
\[
\Ext_B^1(L(\lambda),k) \rightarrow \Ext_{U_r}^1(L(\lambda),k)^{\Tfq}
\]
is an injection.
\end{corollary}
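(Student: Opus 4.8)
The plan is to realize the restriction map as the edge map of the Lyndon--Hochschild--Serre spectral sequence for the normal subgroup $U_r \trianglelefteq B$ (equivalently, the inflation--restriction sequence for $1 \to U_r \to B \to B/U_r \to 1$), and then to identify the relevant low-degree terms using the results already established. Concretely, I would take the spectral sequence
\[
E_2^{i,j} = \Ext_{B/U_r}^i(k,\Ext_{U_r}^j(L(\lambda),k)) \Rightarrow \Ext_B^{i+j}(L(\lambda),k),
\]
whose five-term exact sequence reads
\[
0 \to E_2^{1,0} \to \Ext_B^1(L(\lambda),k) \to E_2^{0,1} \to E_2^{2,0} \to \Ext_B^2(L(\lambda),k).
\]
The term $E_2^{1,0} = \Ext_{B/U_r}^1(k,\Hom_{U_r}(L(\lambda),k))$ vanishes: since $\lambda \in X_r(T)$ and $L(\lambda)$ is generated as a $U_r$-module (indeed as a $B_r$-module) by a highest weight vector of nonzero weight when $\lambda \neq 0$, one has $\Hom_{U_r}(L(\lambda),k) = 0$ unless $\lambda = 0$, in which case $L(\lambda) = k$ and the claim is trivial; alternatively, one argues as in Proposition \ref{proposition:HomforUrsocle}. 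Thus the five-term sequence shows that $\Ext_B^1(L(\lambda),k)$ injects into $E_2^{0,1} = \Hom_{B/U_r}(k,\Ext_{U_r}^1(L(\lambda),k))$, and this injection is precisely the restriction map.

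It then remains to identify $\Hom_{B/U_r}(k,\Ext_{U_r}^1(L(\lambda),k))$ with $\Ext_{U_r}^1(L(\lambda),k)^{\Tfq}$, or more precisely to observe that the former is a subspace of the latter. Here I would use that $\Ext_{U_r}^1(L(\lambda),k)$ is a rational $B/U_r$-module, so $B/U_r$ acts through its quotient (with unipotent radical $U/U_r$ and maximal torus $T \cong B/U$), and the $B/U_r$-fixed points are the weight-zero vectors on which $U/U_r$ also acts trivially. The $T$-fixed points — which are what the notation $(-)^T$ means and which agree with $(-)^{\Tfq}$ exactly when the weight-zero space is all that is invariant under $\Tfq$, cf. the hypotheses of the main theorems — certainly contain $\Hom_{B/U_r}(k,-)$, since a $B/U_r$-invariant is in particular $T$-invariant. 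So $\Ext_B^1(L(\lambda),k)$ injects into $\Ext_{U_r}^1(L(\lambda),k)^T$, and since $\Tfq \subseteq T$ we get an injection into $\Ext_{U_r}^1(L(\lambda),k)^{\Tfq}$ a fortiori; the statement as phrased (landing in the $\Tfq$-invariants) is thus the weaker and immediate consequence.

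The only genuinely delicate point is the vanishing $E_2^{1,0} = 0$, i.e. that $\Hom_{U_r}(L(\lambda),k) = 0$ for $0 \neq \lambda \in X_r(T)$; but this is standard (it is the content of the parenthetical $B_r$-generation remark in the proof of Proposition \ref{proposition:HomforUrsocle}, applied here with the dual roles of $L(\lambda)$ and $k$), and it is where the hypothesis $\lambda \in X_r(T)$ is used. Everything else is formal manipulation of the spectral sequence together with the fact, already recorded before Proposition \ref{proposition:HomforUrsocle}, that $\Ext_{U_r}^1(L(\lambda),k)$ is a rational $B/U_r$-module so that taking $B/U_r$-socles, $T$-fixed points, and $\Tfq$-fixed points makes sense and is compatible in the evident way. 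I do not expect the hypothesis $p > 2$ to play an essential role beyond what is inherited from the cited references.
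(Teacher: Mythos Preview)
Your overall strategy---the Lyndon--Hochschild--Serre five-term sequence, followed by the chain of inclusions $(-)^{B/U_r} \subseteq (-)^T \subseteq (-)^{\Tfq}$---is exactly the paper's, but you have chosen the wrong normal subgroup: you run the spectral sequence for $U_r \trianglelefteq B$, whereas the paper (via Proposition~\ref{proposition:HomforUrsocle} with $\mu=\nu=0$) uses $B_r \trianglelefteq B$. This matters precisely at the step where you claim $E_2^{1,0}=0$.

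Your assertion that $\Hom_{U_r}(L(\lambda),k)=0$ for $\lambda \neq 0$ is false. Since $U_r$ is an infinitesimal unipotent group scheme, every nonzero finite-dimensional $U_r$-module has nonzero invariants and coinvariants; concretely, $\Hom_{U_r}(L(\lambda),k) = (L(\lambda)^*)^{U_r}$ contains at least the lowest-weight line of $L(\lambda)^*$, of weight $-\lambda$. The argument ``$L(\lambda)$ is generated by a highest weight vector of nonzero weight'' is a $T$-theoretic constraint and says nothing about $U_r$-maps, because $U_r$ does not see weights. It \emph{does} constrain $B_r$-maps: any $B_r$-homomorphism $L(\lambda)\to k$ must send $v_\lambda$ into the $\lambda$-weight space of $k$, which is zero. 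That is exactly why the paper works with $B_r$: one gets $\Hom_{B_r}(L(\lambda),k)=0$ immediately, hence $E_2^{i,0}=0$ for all $i$, and the five-term sequence yields $\Ext_B^1(L(\lambda),k)\cong \Ext_{B_r}^1(L(\lambda),k)^{B/B_r}\cong \Ext_{U_r}^1(L(\lambda),k)^{B/U_r}$. Your route is salvageable---one can compute $E_2^{1,0}=\Ext_{B/U_r}^1(k,-\lambda)$ and show it vanishes by analyzing the $T$-weights of $\opH^1(U/U_r,k)$---but that is extra work avoided by switching to $B_r$ from the outset.
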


\begin{proof}
If $\lambda = 0$, then $\Ext_B^1(k,k) \cong \Ext_G^1(k,k) = 0$ by \cite[II.4.11]{Jantzen:2003}. On the other hand, if $\lambda \neq 0$, then taking $\mu = \nu = 0$ in the five-term exact sequence \eqref{eq:5termforUrsocle}, and applying \eqref{eq:HomforUrsocle}, we get
\begin{equation} \label{eq:BtoUrTfqinvariants}
\begin{split}
\Ext_B^1(L(\lambda),k) &\cong \Ext_{B_r}^1(L(\lambda),k)^{B/B_r} \\
&\cong \Ext_{U_r}^1(L(\lambda),k)^{B/U_r} \\
& \subseteq \Ext_{U_r}^1(L(\lambda),k)^T \\
&\subseteq \Ext_{U_r}^1(L(\lambda),k)^{\Tfq},
\end{split}
\end{equation}
in which the isomorphisms are induced by the corresponding restriction maps.
\end{proof}

Given $\lambda,\mu \in X(T)$, write $\mu \uparrow \lambda$ for the order relation on $X(T)$ defined in \cite[II.6.4]{Jantzen:2003}.

\begin{theorem} \label{theorem:socleofUrcohomology}
Suppose $p>2$ and let $\lambda \in X_r(T)$. Then
\[
\soc_{B/U_r} \Ext_{U_r}^1(L(\lambda),k) \cong \hspace{-1em} \bigoplus_{\substack{\alpha \in \Delta \\ (\lambda,\alpha^\vee) = ap^n-1 \\ (\lambda,\alpha^\vee) \neq p^r-1 \\ 0 \leq n < r \\ 1 \leq a \leq p}} \hspace{-1em} -s_\alpha \cdot \lambda
\oplus \hspace{-2em} \bigoplus_{\substack{\alpha \in \Delta \\ (a-1)p^n \leq (\lambda,\alpha^\vee) < ap^n-1 \\ 0 < n < r \\ 1 \leq a < p}} \hspace{-2em} -(\lambda-ap^n\alpha)
\oplus \bigoplus_{\substack{\sigma \in X(T)_+ \\ \sigma < \lambda}} (-\sigma)^{\oplus m_\sigma}
\]
where $m_\sigma = \dim \Ext_G^1(L(\lambda),H^0(\sigma))$.
\end{theorem}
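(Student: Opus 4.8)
The plan is to start from Proposition~\ref{proposition:HomforUrsocle}. Writing an arbitrary weight uniquely as $\tau = \mu + p^r\nu$ with $\mu \in X_r(T)$ and $\nu \in X(T)$, that proposition identifies the multiplicity of the irreducible $(-\tau)$ in $\soc_{B/U_r}\Ext_{U_r}^1(L(\lambda),k)$ with $\dim \Ext_B^1(L(\lambda),\tau)$ when $\mu \neq \lambda$, and with $0$ when $\mu = \lambda$. So the theorem reduces to two tasks: (A) compute $\dim\Ext_B^1(L(\lambda),\tau)$ for all $\tau \in X(T)$; and (B) discard from the answer those $\tau$ with $\tau \equiv \lambda \pmod{p^r X(T)}$. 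Task (B) is bookkeeping, using only $\lambda \in X_r(T)$: it is exactly what forces the exclusion of the value $(\lambda,\alpha^\vee) = p^r-1$ from the first sum (there $s_\alpha\cdot\lambda = \lambda - p^r\alpha \equiv \lambda$) and the restriction $n < r$ in both sums, and one checks the dominant weights $\sigma < \lambda$ occurring in the third sum are never congruent to $\lambda$ modulo $p^r X(T)$.

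For task (A) I would split according to whether $\tau$ is dominant. If $\tau \in X(T)_+$, then Kempf vanishing gives $\ind_B^G\tau = H^0(\tau)$ with $R^{>0}\ind_B^G\tau = 0$; since $L(\lambda)$ is a rational $G$-module, $R^i\ind_B^G(L(\lambda)^*\otimes\tau) \cong L(\lambda)^*\otimes R^i\ind_B^G\tau$, and the Grothendieck spectral sequence for the composite $\Hom_B(L(\lambda),-) = \Hom_G(L(\lambda),\ind_B^G(-))$ collapses to $\Ext_B^1(L(\lambda),\tau) \cong \Ext_G^1(L(\lambda),H^0(\tau))$. By standard facts (strong linkage, and $\soc_G H^0(\lambda) = L(\lambda)$) this is nonzero only if $\tau < \lambda$, producing exactly the third summand $\bigoplus_{\sigma\in X(T)_+,\,\sigma<\lambda}(-\sigma)^{\oplus m_\sigma}$ with $m_\sigma = \dim\Ext_G^1(L(\lambda),H^0(\sigma))$.

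The main work is the case $\tau \notin X(T)_+$. Here I would choose a simple root $\alpha$ with $(\tau,\alpha^\vee) < 0$ and reduce along the minimal parabolic $P_\alpha \supset B$: the composite $\Hom_B(L(\lambda),-) = \Hom_{P_\alpha}(L(\lambda),\ind_B^{P_\alpha}(-))$ yields a Grothendieck spectral sequence $\Ext_{P_\alpha}^i(L(\lambda),R^j\ind_B^{P_\alpha}\tau) \Rightarrow \Ext_B^{i+j}(L(\lambda),\tau)$. The $\mathrm{SL}_2$-theory for $P_\alpha/B \cong \mathbb{P}^1$ gives $\ind_B^{P_\alpha}\tau = 0$; that $R^j\ind_B^{P_\alpha}\tau = 0$ for all $j$ when $(\tau,\alpha^\vee) = -1$; and that when $(\tau,\alpha^\vee) \leq -2$ only $R^1\ind_B^{P_\alpha}\tau$ is nonzero, being an explicit $P_\alpha$-module with weights on the $\alpha$-string through $s_\alpha\cdot\tau$. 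Thus $\Ext_B^1(L(\lambda),\tau) = 0$ unless $(\tau,\alpha^\vee)\leq -2$, in which case $\Ext_B^1(L(\lambda),\tau) \cong \Hom_{P_\alpha}(L(\lambda),R^1\ind_B^{P_\alpha}\tau)$. Restricting $L(\lambda)$ to the Levi of $P_\alpha$, decomposing this $\mathrm{SL}_2$-module via the Steinberg tensor product theorem, and pairing it under $\Hom_{\mathrm{SL}_2}$ against the costandard/Weyl $\mathrm{SL}_2$-module furnished by $R^1\ind_B^{P_\alpha}\tau$, one finds this $\Hom$-space is nonzero precisely when $(\lambda,\alpha^\vee)$ has one of the two shapes in the statement: $(\lambda,\alpha^\vee) = ap^n - 1$, giving $\tau = s_\alpha\cdot\lambda$ (the first sum), or $(a-1)p^n \leq (\lambda,\alpha^\vee) < ap^n-1$, giving $\tau = \lambda - ap^n\alpha$ (the second sum). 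Applying the task-(B) truncation then yields the stated formula.

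I expect the last paragraph to contain the main obstacle: carefully carrying out the $\mathrm{SL}_2$/Steinberg analysis of $\Hom_{P_\alpha}(L(\lambda),R^1\ind_B^{P_\alpha}\tau)$ while tracking the Frobenius twists (this twist tracking is precisely what upgrades the $r=1$ argument of \cite[\S 3]{UGA:2011} to arbitrary $r$), and verifying that the answer is independent of the choice of simple root $\alpha$ with $(\tau,\alpha^\vee) < 0$ so that no weight is counted twice, either within or across the three sums. Everything else — the reduction in the first paragraph, Kempf vanishing in the second, and the vanishing parts of the $\mathrm{SL}_2$-theory — is routine.
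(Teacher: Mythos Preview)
Your proposal is correct and follows essentially the same strategy as the paper's proof: reduce via Proposition~\ref{proposition:HomforUrsocle}, handle dominant $\tau$ by Kempf vanishing to get $\Ext_B^1(L(\lambda),\tau)\cong\Ext_G^1(L(\lambda),H^0(\tau))$, and treat non-dominant $\tau$ separately. The only difference is that where you outline a minimal-parabolic $P_\alpha$ and $\mathrm{SL}_2$/Steinberg computation of $\Ext_B^1(L(\lambda),\tau)$ for non-dominant $\tau$, the paper simply cites Andersen \cite[Proposition~2.3]{Andersen:1984a}, which already furnishes exactly the dichotomy you describe (your conditions are his, and your sketch is essentially a reconstruction of his argument). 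One small remark: your worry about ``independence of the choice of $\alpha$'' is not really an issue, since your $P_\alpha$ spectral sequence computes the whole group $\Ext_B^1(L(\lambda),\tau)$ rather than an $\alpha$-indexed contribution, so any valid choice of $\alpha$ yields the same answer; the distinctness of the weights across the three sums is then a separate elementary check.
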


\begin{proof}
The strategy is the same as that for the proof of \cite[Theorem 3.2.1]{UGA:2011}. Let $\mu \in X_r(T)$ with $\mu \neq \lambda$, and let $\nu \in X(T)$. Then by Proposition \ref{proposition:HomforUrsocle}, $(-\mu - p^r\nu)$ occurs with multiplicity $\dim \Ext_B^1(L(\lambda),\mu + p^r \nu)$ in $\soc_{B/U_r} \Ext_{U_r}^1(L(\lambda),k)$. First suppose $\mu + p^r \nu \in X(T)_+$. Then the same argument as in the proof of \cite[Theorem 3.2.1]{UGA:2011} (replacing $p$ by $p^r$ and $X_1(T)$ by $X_r(T)$) shows that $(-\mu - p^r \nu)$ occurs in $\soc_{B/U_r} \Ext_{U_r}^1(L(\lambda),k)$ with multiplicity $m_{\mu + p^r \nu}$, and conversely that every dominant weight $\sigma \in X(T)_+$ with $m_\sigma \neq 0$ satisfies $\sigma < \lambda$, hence can be written in the form $\mu + p^r \nu$ with $\mu \in X_r(T)$, $\nu \in X(T)$, and $\mu \neq \lambda$.

Now suppose $\mu + p^r \nu \notin X(T)_+$. Then by \cite[Proposition 2.3]{Andersen:1984a}, $\Ext_B^1(L(\lambda),\mu + p^r \nu) \neq 0$ only if one of the following mutually exclusive conditions is satisfied, in which case $\Ext_B^1(L(\lambda),\mu + p^r \nu) \cong k$:
\begin{enumerate}[leftmargin=*]
\item $\mu + p^r \nu = s_\alpha \cdot \lambda$ for some $\alpha \in \Delta$ with $(\lambda,\alpha^\vee) = ap^n - 1$, $n \geq 0$ and $1 \leq a \leq p$; or
\item $\mu + p^r \nu = \lambda - ap^n \alpha$ for some $\alpha \in \Delta$ with $(a-1)p^n \leq (\lambda,\alpha^\vee) < ap^n - 1$, $n > 0$, and $1 \leq a < p$.
\end{enumerate}
Since $\lambda \in X_r(T)$, (1) is satisfied only if $ap^n \leq p^r$, or equivalently, only if $0 \leq n < r$ and $1 \leq a \leq p$. And since $\mu \neq \lambda$, (1) is satisfied only if $(\lambda,\alpha^\vee) \neq p^r-1$, and (2) is satisfied only if $n < r$.
\end{proof}

\begin{corollary} \label{corollary:socleofUrcohomology}
Suppose $\lambda \in X(T)_+$ is a dominant root or is less than or equal to a fundamental weight. Assume that $p > 3$. Then
\[
\soc_{B/U_r} \Ext_{U_r}^1(L(\lambda),k) \cong
\bigoplus_{\alpha \in \Delta} -s_\alpha \cdot \lambda
\oplus \bigoplus_{\substack{\alpha \in \Delta \\ 0 < n < r}} -(\lambda - p^n \alpha)
\oplus \bigoplus_{\substack{\sigma \in X(T)_+ \\ \sigma < \lambda}} (-\sigma)^{\oplus m_\sigma}.
\]
\end{corollary}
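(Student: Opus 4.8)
The plan is to specialize Theorem \ref{theorem:socleofUrcohomology} to the weights in question and check that the exceptional pieces disappear. First I would invoke the hypothesis $p > 3$ together with the explicit description of dominant roots and weights below a fundamental weight (see Table \ref{table:problemweights} and \cite[\S 7.1]{UGA:2011}): in all these cases $(\lambda,\alpha^\vee) \leq 2$ for every $\alpha \in \Delta$, with equality occurring only in a handful of short-root or doubled-fundamental-weight situations. Since $p > 3$, we have $p^r - 1 \geq p - 1 > 2 \geq (\lambda,\alpha^\vee)$, so the first summand in Theorem \ref{theorem:socleofUrcohomology} simplifies: the condition $(\lambda,\alpha^\vee) = ap^n - 1$ with $0 \leq n < r$ and $1 \leq a \leq p$ forces $n = 0$ (since $ap^n - 1 \geq p - 1 > 2$ once $n \geq 1$), and then $a = (\lambda,\alpha^\vee) + 1$, which is automatically in the range $1 \leq a \leq p$ (indeed $a \leq 3 < p$); moreover the side condition $(\lambda,\alpha^\vee) \neq p^r - 1$ is vacuous. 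But for $n = 0$ one has $s_\alpha \cdot \lambda = \lambda - ((\lambda,\alpha^\vee)+1)\alpha = \lambda - a\alpha$, and this term contributes exactly $-s_\alpha \cdot \lambda$. Wait — the first sum in the corollary is written as $\bigoplus_{\alpha \in \Delta} -s_\alpha \cdot \lambda$ with no constraint, so I need to check that \emph{every} $\alpha \in \Delta$ contributes, i.e. that the defining condition $(\lambda,\alpha^\vee) = ap^n - 1$ is satisfied with $n = 0$ for every simple $\alpha$; since $(\lambda,\alpha^\vee) \geq 0$ is an integer, taking $n = 0$ and $a = (\lambda,\alpha^\vee) + 1 \geq 1$ always works, and $a \leq 3 \leq p$, so indeed each $\alpha \in \Delta$ contributes $-s_\alpha \cdot \lambda$ exactly once.

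Next I would handle the middle sum. In Theorem \ref{theorem:socleofUrcohomology} the second summand ranges over $\alpha \in \Delta$, $0 < n < r$, $1 \leq a < p$ with $(a-1)p^n \leq (\lambda,\alpha^\vee) < ap^n - 1$, contributing $-(\lambda - ap^n\alpha)$. Since $0 < n$ gives $p^n \geq p > 3 \geq (\lambda,\alpha^\vee) + 1$, i.e. $(\lambda,\alpha^\vee) < p^n - 1$, the inequality $(a-1)p^n \leq (\lambda,\alpha^\vee)$ forces $a = 1$ (if $a \geq 2$ then $(a-1)p^n \geq p^n > (\lambda,\alpha^\vee)$), and with $a = 1$ the condition becomes $0 \leq (\lambda,\alpha^\vee) < p^n - 1$, which holds for every $\alpha \in \Delta$ and every $n$ with $0 < n < r$ because $(\lambda,\alpha^\vee) \leq 2 < p - 1 \leq p^n - 1$. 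Hence the middle sum collapses to $\bigoplus_{\alpha \in \Delta,\ 0 < n < r} -(\lambda - p^n\alpha)$, exactly as in the corollary. The third sum is unchanged, since $m_\sigma = \dim \Ext_G^1(L(\lambda),H^0(\sigma))$ is the same quantity in both statements.

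The one point that genuinely requires care — and which I expect to be the main obstacle — is verifying the numerical bound $(\lambda,\alpha^\vee) \leq 2$ for all $\alpha \in \Delta$ uniformly over the relevant $\lambda$, and in particular confirming that the short-root exceptions (type $B_2$ with $\alpha_0 = \omega_1$, type $B_n$ with $\wtalpha = \omega_2$, etc.) still satisfy the pairing bound. This is a finite check against Table \ref{table:problemweights} and the list in \cite[\S 7.1]{UGA:2011}: fundamental weights $\omega_i$ pair to give $(\omega_i,\alpha_j^\vee) = \delta_{ij}$, so $(\omega_i,\alpha^\vee) \leq 1$ always; for $\lambda = 2\omega_i$ one gets at most $2$; and for the dominant roots one reads off the coefficients in the fundamental-weight basis from Table \ref{table:problemweights}, all of which are at most $2$ (the worst case being entries like $2\omega_2$ in type $B_2$ or $2\omega_1$ in type $C_n$). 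Given $p > 3$, every such value is $< p - 1$, which is all the argument needs. A secondary subtlety is that when $a = p$ is allowed in case (1) of Theorem \ref{theorem:socleofUrcohomology} we only get $a = (\lambda,\alpha^\vee) + 1 \leq 3 < p$, so we never land on the boundary $a = p$; hence the exclusion $(\lambda,\alpha^\vee) \neq p^r - 1$ never bites, which is exactly why the constraint drops from the first sum. Assembling these observations yields the stated form of $\soc_{B/U_r} \Ext_{U_r}^1(L(\lambda),k)$.
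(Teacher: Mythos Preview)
Your approach is essentially the same as the paper's: verify a uniform bound on $(\lambda,\alpha^\vee)$ for the weights in question and then watch the constraints in Theorem~\ref{theorem:socleofUrcohomology} collapse. The paper's proof is a single sentence (check $0\le(\lambda,\alpha^\vee)\le 3$ from Table~\ref{table:problemweights} and \cite[\S 7.1]{UGA:2011}, then apply the theorem); you have simply written out the mechanics of that collapse explicitly, which is fine.

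One small point: the paper records the bound as $(\lambda,\alpha^\vee)\le 3$, not $\le 2$. Your justification for $\le 2$ checks fundamental weights, weights of the form $2\omega_i$, and the dominant roots in Table~\ref{table:problemweights}, but does not systematically cover all dominant weights below a fundamental weight in the exceptional types. This is harmless for the argument, since every inequality you use (e.g.\ $a=(\lambda,\alpha^\vee)+1\le p$, or $(\lambda,\alpha^\vee)<p^n-1$) already follows from $(\lambda,\alpha^\vee)\le 3$ and $p\ge 5$; but you should either cite the paper's bound of $3$ or complete the case check rather than assert $\le 2$.
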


\begin{proof}
From the list of possible values for $\lambda$ (cf.\ Table \ref{table:problemweights} and \cite[\S 7.1]{UGA:2011}), one can verify for all $\alpha \in \Delta$ that $0 \leq (\lambda,\alpha^\vee) \leq 3$. Then the result follows from Theorem \ref{theorem:socleofUrcohomology}.
\end{proof}

\begin{remark} \label{remark:minimaldomroot}
The only dominant weights less than or equal to the highest root $\wtalpha$ are $\wtalpha$, the highest short root $\alpha_0$, and the zero weight. By direct inspection, one observes that if $\wtalpha \neq \alpha_0$, then $\alpha_0$ is a fundamental weight. Thus, the dominant weights $\sigma$ that occur in Corollary  \ref{corollary:socleofUrcohomology} are all less than or equal to a fundamental weight.
\end{remark}

\subsection{Semisimplicity of \texorpdfstring{$U_r$}{Ur} cohomology}

Set $M = \Ext_{U_r}^1(L(\lambda),k)$. Our goal now is to show under certain restrictions on $p$ and $\lambda$ that the socle of $M$ is equal to the entire module. Given $\mu \in X(T)$, let $I_r(\mu)$ be the injective hull of $\mu$ in the category of rational $B/U_r$-modules. Then $I_r(\mu) \cong k[U/U_r] \otimes \mu$ as a $B/U_r$-module, where $k[U/U_r]$ is the coordinate ring of the unipotent group $U/U_r$ (cf.\ \cite[I.3.11]{Jantzen:2003}).

Suppose $\lambda \in X(T)_+$ is a dominant root or is less than or equal to a fundamental weight, and assume that $p > 3$. Set
\begin{equation} \label{eq:Q}
Q = \bigoplus_{\alpha \in \Delta} I_r(-s_\alpha \cdot \lambda)
\oplus \bigoplus_{\substack{\alpha \in \Delta \\ 0 < n < r}} I_r(-\lambda + p^n \alpha)
\oplus \bigoplus_{\substack{\sigma \in X(T)_+ \\ \sigma < \lambda}} I_r(-\sigma)^{\oplus m_\sigma}.
\end{equation}
Then $\soc_{B/U_r} M$ is a submodule of $\soc_{B/U_r} Q$ by Corollary \ref{corollary:socleofUrcohomology}, so there exists an injection of $B/U_r$-modules $M \hookrightarrow Q$. To show that $\soc_{B/U_r} M = M$, it suffices to show that no weight from the second socle layer of $Q$ can be a weight of $M$.

\begin{lemma} \label{lemma:secondsoclelayer}
Let $\mu \in X(T)$. Then the second socle layer of the $B/U_r$-module $I_r(\mu)$ consists of one-dimensional modules of the form $\mu + p^m \gamma$ with $\gamma \in \Delta$ and $m \geq r$.
\end{lemma}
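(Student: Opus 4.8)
The plan is to describe $I_r(\mu)$ explicitly and read off its second socle layer from its known structure. Recall from the remarks preceding the lemma that $I_r(\mu) \cong k[U/U_r] \otimes \mu$ as a $B/U_r$-module. Since the action of $U$ on $k[U/U_r]$ is by translation and the $B/U_r$-structure comes from the $B$-action (with $U_r$ acting trivially), the first step is to understand $k[U/U_r]$ as a rational $B/U_r$-module, and in particular its socle and second socle layer. The group $U/U_r$ is unipotent, so $k[U/U_r]$ has simple socle equal to the trivial module $k$ (the constants), and one can compute the second socle layer by identifying the span of functions whose image in $k[U/U_r]/k$ generates trivial $U/U_r$-submodules. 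Twisting by $\mu$ then shifts everything: $\soc_{B/U_r} I_r(\mu) = \mu$, and the second socle layer of $I_r(\mu)$ is $\mu$ tensored with the second socle layer of $k[U/U_r]$.

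The key computational step is therefore: \emph{the second socle layer of $k[U/U_r]$ consists of one-dimensional $B/U_r$-modules with weights of the form $p^m\gamma$ for $\gamma \in \Delta$ and $m \geq r$.} To see this, filter $U$ by root subgroups $U_\beta$ for $\beta \in \Phi^+$; since $U_r$ is the $r$-th Frobenius kernel, $U_\beta/(U_\beta)_r$ contributes the coordinate functions $x_\beta^{p^m}$ for $m \geq r$ to $k[U/U_r]$ (the lower powers $x_\beta^{p^0},\dots,x_\beta^{p^{r-1}}$ being killed off in the quotient by $U_r$). The $T$-weight of $x_\beta^{p^m}$ is $p^m\beta$, so the natural ``linear'' part of $k[U/U_r]$ — the functions sitting just above the constants — is spanned by monomials $x_\beta^{p^m}$ with $\beta \in \Phi^+$ and $m \geq r$, of $T$-weight $p^m\beta$. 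Among these, exactly the ones with $\beta$ simple span trivial $U/U_r$-submodules modulo the constants: for $\beta$ simple, the root subgroups acting on $x_\beta^{p^m}$ produce only higher-degree terms (there is no simple root below $\beta$), whereas for $\beta$ non-simple the $U$-action on $x_\beta$ already produces linear terms in the $x_\gamma$ for smaller roots $\gamma$, so $x_\beta^{p^m}$ does not lie in the second socle layer. This is the main obstacle: one must argue carefully, using the Chevalley commutator formula and the explicit description of the $U$-comodule structure on $k[U/U_r]$ (as in \cite[\S 3]{UGA:2011} and the references therein), that the second socle layer is spanned precisely by $\{x_\gamma^{p^m} : \gamma \in \Delta,\ m \geq r\}$ and contains nothing else — in particular no products of coordinate functions, which would have weights that are sums of positive roots rather than single (scaled) simple roots.

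Granting this description of $k[U/U_r]$, the lemma follows immediately: tensoring with $\mu$, the second socle layer of $I_r(\mu) = k[U/U_r] \otimes \mu$ consists of the one-dimensional $B/U_r$-modules of weight $\mu + p^m\gamma$ with $\gamma \in \Delta$ and $m \geq r$, as claimed. I would close by noting that this generalizes verbatim the $r=1$ computation of \cite[\S 3]{UGA:2011}, with $p^m$ (for $m \geq r$) in place of the powers $p,p^2,\dots$ appearing there, so the argument there can be cited or adapted with only the obvious notational changes.
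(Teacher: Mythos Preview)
Your proposal is correct and follows essentially the same approach as the paper's proof, which simply refers to \cite[Lemma 3.3.2]{UGA:2011} (the $r=1$ case) and notes that the adaptation to $U_r$ goes through with the single additional observation that the $T$-weights of $k[U/U_r]$ lie in $p^r X(T)$, forcing $m \geq r$. You have spelled out in more detail what that cited argument actually says about the coordinate ring and why only the simple-root monomials $x_\gamma^{p^m}$ survive to the second socle layer, but the underlying strategy is identical.
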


\begin{proof}
The argument establishing that the weights in the second socle layer of $I_r(\mu)$ have the form $\mu + p^m \gamma$ is similar to the argument in the proof of \cite[Lemma 3.3.2]{UGA:2011}, replacing $U_1$ and $I(\mu)$ there by $U_r$ and $I_r(\mu)$. To conclude that $m \geq r$ and not merely $m > 0$, we use the fact that the weights of $T$ in $k[U/U_r]$ are all elements of $p^r X(T)$.
\end{proof}

\begin{lemma} \label{lemma:weightsofH1Ur}
Let $V$ be a finite-dimensional rational $B$-module. Let $\mu$ be a weight of $T$ in $\Ext_{U_r}^1(V,k) \cong \opH^1(U_r,V^*)$. Then $\mu = p^i \beta + \nu$ for some $\beta \in \Delta$, some integer $0 \leq i < r$, and some weight $\nu$ of $V^* = \Hom_k(V,k)$.
\end{lemma}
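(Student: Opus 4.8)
The plan is to reduce to the case of trivial coefficients by a d\'evissage, and then to read off the weights of $\opH^1(U_r,k)$ from the algebra structure of the distribution algebra $\Dist(U_r)$.

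First I would fix a composition series $0 = W_0 \subset W_1 \subset \cdots \subset W_n = V^*$ of $V^*$ as a rational $B$-module. Each factor $W_j/W_{j-1}$ is one-dimensional, hence of the form $\nu_j$ for a character $\nu_j$ of $T$, and the $\nu_j$ are precisely the weights of $V^*$ counted with multiplicity; moreover $U$, and in particular $U_r$, acts trivially on each $W_j/W_{j-1}$. Thus $\opH^1(U_r,\nu_j) \cong \opH^1(U_r,k) \otimes \nu_j$ as $B/U_r$-modules (in particular as $T$-modules; cf.\ \cite[I.6.9]{Jantzen:2003}). Feeding the short exact sequences $0 \to W_{j-1} \to W_j \to \nu_j \to 0$ into the long exact sequence in $U_r$-cohomology and inducting on $j$ shows that every $T$-weight of $\opH^1(U_r,V^*)$ has the form $\eta + \nu$ with $\eta$ a weight of $\opH^1(U_r,k)$ and $\nu$ a weight of $V^*$. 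It therefore suffices to prove that the weights of $\opH^1(U_r,k)$ are of the form $p^i\beta$ with $\beta \in \Delta$ and $0 \le i < r$.

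For this I would use that $U_r$ is infinitesimal, so $U_r$-cohomology is $\Dist(U_r)$-cohomology and $\opH^1(U_r,k) \cong \Ext^1_{\Dist(U_r)}(k,k) \cong (I/I^2)^*$, where $I \subset \Dist(U_r)$ is the augmentation ideal. The space $I/I^2$ is spanned by the images of any algebra generating set of $\Dist(U_r)$, and by the Kostant $\Z$-form (together with an elementary binomial-coefficient computation) $\Dist(U_r)$ is generated by the divided powers $e_\gamma^{(p^i)}$, $\gamma \in \Phi^+$, $0 \le i < r$, whose $T$-weights are, up to the overall sign fixed by the conventions of \cite{Jantzen:2003}, equal to $p^i\gamma$. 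This already confines the weights of $\opH^1(U_r,k)$ to $\{p^i\gamma : \gamma \in \Phi^+,\ 0 \le i < r\}$, and it remains to eliminate the non-simple $\gamma$, i.e.\ to show $e_\gamma^{(p^i)} \in I^2$ whenever $\gamma \in \Phi^+ \setminus \Delta$. This I would prove by a downward induction on $\hght(\gamma)$: writing $\gamma = \gamma' + \beta$ with $\beta \in \Delta$, the Chevalley commutator formula in $\Dist(U)$ expresses $e_{\beta}^{(p^i)} e_{\gamma'}^{(p^i)} - e_{\gamma'}^{(p^i)} e_{\beta}^{(p^i)}$ as a nonzero scalar multiple of $e_\gamma^{(p^i)}$ plus a sum of terms each of which is either a product of two elements of $I$ or involves a divided power $e_\delta^{(k)}$ with $\hght(\delta) > \hght(\gamma)$; the left-hand side lies in $I^2$, and the extra terms lie in $I^2$ by the inductive hypothesis, so $e_\gamma^{(p^i)} \in I^2$.

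The d\'evissage and the reduction to $\Dist(U_r)$ are routine; the substance, and the step I expect to be the main obstacle, is the last one: controlling $I/I^2$ through the interaction of the Frobenius-truncated divided powers with the commutator relations of $\Dist(U)$. The one delicate point there is that the structure constants arising as the leading coefficients must be nonzero in $k$; since these are the usual integers of absolute value at most $3$, the argument proceeds under the hypotheses on $p$ in force here (those of Table~\ref{table:pandq}).
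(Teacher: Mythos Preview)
Your reduction to trivial coefficients by d\'evissage is correct and is essentially what the paper does (it cites \cite[\S 2.5]{UGA:2009} for the fact that the $T$-weights of $\opH^1(U_r,V^*)$ lie among those of $\opH^1(U_r,k)\otimes V^*$). Where you diverge is in computing the weights of $\opH^1(U_r,k)$. The paper argues by induction on $r$: for $r=1$ the May spectral sequence \cite[I.9.20]{Jantzen:2003} exhibits $\opH^1(U_1,k)$ as a $T$-subquotient of $\opH^1(\fu,k)\cong(\fu/[\fu,\fu])^*$, whose weights are the simple roots; for the inductive step the LHS spectral sequence for $U_{r-1}\lhd U_r$ gives that $\opH^1(U_r,k)$ is a $T$-subquotient of $\opH^1(U_{r-1},k)\oplus\opH^1(U_r/U_{r-1},k)$, and $\opH^1(U_r/U_{r-1},k)\cong\opH^1(U_1,k)^{(r-1)}$. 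This is short and, importantly, characteristic-free.

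Your route through $(I/I^2)^*$ is legitimate but looser at the last step than you suggest. First, the downward induction on $\hght(\gamma)$ is not needed: the commutator $[e_\beta^{(p^i)},e_{\gamma'}^{(p^i)}]$ has $T$-weight $p^i\gamma$, so in any PBW expansion the \emph{only} length-one monomial that can occur is $e_\gamma^{(p^i)}$ itself (a singleton $e_\delta^{(k)}$ with $k\delta=p^i\gamma$ forces $\delta=\gamma$, $k=p^i$ in a reduced root system), and every other term is already a product of two elements of $I$. Second, the leading scalar is not literally the Chevalley constant $N_{\beta,\gamma'}$; for divided powers it is $N_{\beta,\gamma'}^{p^i}$ (as one sees already in $SL_3$, where $e_1^{(m)}e_2^{(m)}=\sum_k e_2^{(m-k)}e_{12}^{(k)}e_1^{(m-k)}$ gives coefficient $(\pm1)^m$), so your conclusion survives once $N_{\beta,\gamma'}$ is a unit, but the reason is slightly different from what you wrote. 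Third, and most relevant for matching the statement, the lemma carries no hypothesis on $p$; by importing one you are proving strictly less than the paper. If you want your approach to be characteristic-free, skip the commutator computation entirely and invoke instead that $\Dist(U_r)$ is already generated as a $k$-algebra by $\{e_\alpha^{(p^i)}:\alpha\in\Delta,\ 0\le i<r\}$ (a consequence of Kostant's $\Z$-form together with Lucas' theorem); this gives the spanning set for $I/I^2$ immediately and with no restriction on $p$.
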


\begin{proof}
First, if $\mu$ is a weight of $T$ in $\opH^1(U_r,V^*)$, then $\mu$ is also a weight of $T$ in $\opH^1(U_r,k) \otimes V^*$ by the argument in \cite[\S 2.5]{UGA:2009}, so it suffices to describe the weights of $\opH^1(U_r,k)$. This we do by induction on $r$. First, if $r=1$, then $\opH^1(U_1,k)$ is by \cite[I.9.20]{Jantzen:2003} a $T$-module subquotient of $\opH^1(\fu,k)$, the ordinary Lie algebra cohomology of $\fu = \Lie(U)$. Since $\opH^1(\fu,k)$ is a $T$-module subquotient of $\Hom_k(\fu/[\fu,\fu],k)$, as can be seen by inspecting the Koszul resolution for computing Lie algebra cohomology, we conclude that the weights of $T$ in $\opH^1(U_1,k)$ must all be simple roots. For the induction step, consider the LHS spectral sequence $E_2^{i,j} = \opH^i(U_r/U_{r-1},\opH^j(U_{r-1},k)) \Rightarrow \opH^{i+j}(U_r,k)$. Then $\opH^1(U_r,k)$ is a $T$-module subquotient of $E_2^{1,0} \oplus E_2^{0,1} \cong \opH^1(U_r/U_{r-1},k) \oplus \opH^1(U_{r-1},k)^{U_r}$. By induction, the weights of $T$ in $\opH^1(U_{r-1},k)$ have the form $p^i \beta$ with $\beta \in \Delta$ and $0 \leq i < (r-1)$, while $\opH^1(U_r/U_{r-1},k)$ is isomorphic as a $T$-module to $\opH^1(U_1,k)^{(r-1)}$, which has weights of the form $p^i \beta$ with $\beta \in \Delta$ and $0 \leq i < r$.
\end{proof}

\begin{remark} \label{remark:twistedschemes}
Let $r \geq s \geq 0$. Given a commutative $k$-algebra $A$, define $A^{(s)}$ to be the $k$-algebra that coincides with $A$ as a ring, but with the $k$-module structure modified so that $b \in k$ acts on $A^{(s)}$ as $b^{p^{-s}}$ acts on $A$. Then the Frobenius morphism induces an isomorphism $U_r/U_s \cong (U^{(s)})_{r-s}$, where $U^{(s)}$ is the $k$-group scheme with coordinate ring $k[U]^{(s)}$. Since $U$ admits an $\F_p$-structure, $k[U]^{(s)}$ identifies with $k[U]$ via the arithmetic Frobenius endomorphism, and this induces identifications of $U^{(s)}$ with $U$ and $(U^{(s)})_{r-s}$ with $U_{r-s}$. Under these identifications, the ordinary conjugation action of $B$ on $U_r/U_s$ corresponds to the conjugation action of $B$ on $U_{r-s}$ twisted by $F^s$. It then follows that there exists a graded $B$-module isomorphism $\Hbul(U_r/U_s,k) \cong \Hbul(U_{r-s},k)^{(s)}$. For further details, consult \cite[I.9.2, I.9.5]{Jantzen:2003}.
\end{remark}

We now give conditions under which $\Ext_{U_r}^1(L(\lambda),k)$ is semisimple as a $B/U_r$-module.

\begin{theorem} \label{theorem:semisimplicity}
Suppose $\lambda \in X(T)_+$ is a dominant root or is less than or equal to a fundamental weight. Assume that $p > 5$ if $\Phi$ is of type $E_8$ or $G_2$, and $p > 3$ otherwise. Then as a $B/U_r$-module, $\Ext_{U_r}^1(L(\lambda),k) = \soc_{B/U_r} \Ext_{U_r}^1(L(\lambda),k)$, that is,
\[
\Ext_{U_r}^1(L(\lambda),k) \cong
\bigoplus_{\alpha \in \Delta} -s_\alpha \cdot \lambda
\oplus \bigoplus_{\substack{\alpha \in \Delta \\ 0 < n < r}} -(\lambda - p^n \alpha)
\oplus \bigoplus_{\substack{\sigma \in X(T)_+ \\ \sigma < \lambda}} (-\sigma)^{\oplus m_\sigma}
\]
where $m_\sigma = \dim \Ext_G^1(L(\lambda),H^0(\sigma))$.
\end{theorem}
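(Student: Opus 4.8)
The plan is to carry out the strategy already set up in the paragraph preceding the theorem. Write $M = \Ext_{U_r}^1(L(\lambda),k)$. By Corollary \ref{corollary:socleofUrcohomology} there is an embedding of $B/U_r$-modules $M \hookrightarrow Q$ (with $Q$ as in \eqref{eq:Q}) which is an isomorphism on socles, so if $M$ failed to be semisimple the nonzero module $\soc_{B/U_r}(M/\soc_{B/U_r} M)$ would embed into the second socle layer of $Q$, forcing $M$ and the second socle layer of $Q$ to share a weight of $T$. Hence it suffices to prove that no weight occurring in the second socle layer of $Q$ is a weight of $M$.

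By Lemma \ref{lemma:secondsoclelayer} the weights in the second socle layer of $Q$ have the form $\mu + p^m\gamma$ with $\gamma \in \Delta$ and $m \geq r$, where $\mu$ runs over the socle weights in \eqref{eq:Q}: either $\mu = -s_\alpha\cdot\lambda = -\lambda + ((\lambda,\alpha^\vee)+1)\alpha$ for some $\alpha \in \Delta$; or $\mu = -(\lambda - p^n\alpha) = -\lambda + p^n\alpha$ for some $\alpha \in \Delta$ and $0 < n < r$; or $\mu = -\sigma$ for some $\sigma \in X(T)_+$ with $\sigma < \lambda$ and $\Ext_G^1(L(\lambda),H^0(\sigma)) \neq 0$, in which case $\sigma$ is itself $\leq$ a fundamental weight by Remark \ref{remark:minimaldomroot}. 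On the other hand $M \cong \opH^1(U_r,L(\lambda)^*)$, so by Lemma \ref{lemma:weightsofH1Ur} every weight of $M$ is of the form $p^i\beta + \nu$ with $\beta \in \Delta$, $0 \leq i < r$, and $\nu$ a weight of $L(\lambda)^* = L(-w_0\lambda)$. I would assume, toward a contradiction, that $\mu + p^m\gamma = p^i\beta + \nu$ for such data, and analyze the resulting identity $\nu + \lambda = (\mu + \lambda) + p^m\gamma - p^i\beta$ according to which of the three families contains $\mu$.

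Two features of $\lambda$ drive the estimates: $0 \leq (\lambda,\alpha^\vee) \leq 3$ for all $\alpha \in \Delta$ (as in Corollary \ref{corollary:socleofUrcohomology}); and $L(\lambda)$ is minuscule, quasi-minuscule, or an adjoint representation, so the weights $\nu$ of $L(-w_0\lambda)$ are tightly controlled — in particular $\nu+\lambda$ is a nonnegative integral combination of simple roots of height at most $\hght(\lambda - w_0\lambda)$, and $|(\nu,\gamma^\vee)|$ is at most $1$ when $L(\lambda)$ is minuscule and at most $3$ in general. One first uses that the coefficient of $\beta$ in $\nu+\lambda$ must be nonnegative; since $m \geq r > i$, this forces $\beta \in \{\alpha,\gamma\}$ in each family (using $0 < n < r$ in the middle family and $0 \leq (\sigma,\alpha^\vee) \leq 3$ in the last). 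With $\beta$ so restricted, the term $p^m\gamma$ is too large to be balanced: inserting the explicit form of $\mu+\lambda$, pairing the identity with $\gamma^\vee$ (or with $\alpha^\vee$ when $\alpha=\beta=\gamma$), and invoking the boundedness of $(\nu,\gamma^\vee)$ together with the height restriction and the relevant weight combinatorics of $L(\lambda)$, one reaches a numerical constraint which contradicts the hypothesis that $p > 3$, or $p > 5$ in types $E_8$ and $G_2$. The case $\lambda = 0$ (so $L(\lambda)=k$, and the third family of socle weights is empty) is handled in the same way.

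The main obstacle is the uniformity of these estimates across all Lie types and all $r$: for the classical groups of large rank the height $\hght(\lambda - w_0\lambda)$, as well as the coefficients of $\lambda - w_0\lambda$ in the simple root basis, grow with the rank, so a crude height bound on $\nu+\lambda$ is insufficient, and one must instead exploit the explicit combinatorial description of the weights of the minuscule, quasi-minuscule, or adjoint module $L(\lambda)$ (for which Table \ref{table:problemweights} and \cite[\S 7.1]{UGA:2011} provide the complete list of weights $\lambda$). Working through the cases family by family and type by type — in particular tracking the larger Cartan integers and coroot pairings that occur in types $E_8$ and $G_2$ — is what forces the stated hypotheses on $p$, and it is the only essential point at which the argument goes beyond the $r=1$ case treated in \cite[\S 3.3]{UGA:2011}; the genuinely new ingredients for $r>1$ are the family of socle weights $-(\lambda - p^n\alpha)$ with $0 < n < r$ and the possibility that $m$ strictly exceeds $r$.
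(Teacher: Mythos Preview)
Your overall strategy is exactly the paper's: embed $M$ in $Q$, and show that no weight in the second socle layer of $Q$ is a weight of $M$. Your treatment of the third family of socle weights (the $-\sigma$ with $\sigma<\lambda$ dominant) via pairing with $\gamma^\vee$ is also essentially the paper's argument for that case. The gap is in the first two families, and it stems from a false structural claim: it is \emph{not} true that $L(\lambda)$ is always minuscule, quasi-minuscule, or adjoint. For instance, in type $B_n$ or $D_n$ the weight $\omega_3$ is $\leq$ a fundamental weight but $L(\omega_3)\cong\Lambda^3(V)$ is none of these; in type $E_8$ weights such as $2\omega_8$ or $\omega_7+\omega_8$ arise. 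So the ``tight control'' you invoke on $(\nu,\gamma^\vee)$ and on the weight combinatorics of $L(\lambda)$ is not available in general, and a type-by-type analysis of \emph{all} weights of $L(\lambda)$ would be both long and, as you yourself note, obstructed by the unbounded height of $\lambda-w_0\lambda$ in large rank.

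What you are missing is the observation that your own conclusion $\beta\in\{\alpha,\gamma\}$ already solves this problem. Once $\beta\in J:=\{\alpha,\gamma\}$, the identity $\lambda-\nu_0=-p^i\beta+p^m\gamma+((\lambda,\alpha^\vee)+1)\alpha$ (with $\nu_0=-\nu$ a weight of $L(\lambda)$) shows $\lambda-\nu_0\in\Z_{\geq0}J$, so $\nu_0$ is a weight of the Levi module $H_J^0(\lambda)$ in the sense of \cite[II.5.21]{Jantzen:2003}. This reduces the question to a rank~$\leq 2$ root system, where the possible differences $\lambda-\nu_0$ have uniformly bounded height independent of the rank of $\Phi$; these heights are tabulated in \cite[\S7.2]{UGA:2011}. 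Comparing them with the height $-p^i+p^m+((\lambda,\alpha^\vee)+1)\geq p$ of the right-hand side (respectively $\geq 2p-1$ for the second family) then gives the contradiction under the stated bounds on $p$. With this Levi reduction in place, no casework on the global weight structure of $L(\lambda)$ is needed at all.
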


\begin{proof}
The strategy is the same as for the proof of \cite[Theorem 3.4.1]{UGA:2011}. Set $M = \Ext_{U_r}^1(L(\lambda),k)$, and let $Q$ be the injective $B/U_r$-module defined in \eqref{eq:Q}. Define submodules $Q_1,Q_2,Q_3$ of $Q$ by
\begin{align*} 
Q_1 &= \textstyle \bigoplus_{\alpha \in \Delta} I_r(-s_\alpha \cdot \lambda), \\
Q_2 &= \textstyle \bigoplus_{\alpha \in \Delta, 0 < n < r} I_r(-\lambda + p^n\alpha), \text{ and} \\
Q_3 &= \textstyle \bigoplus_{\sigma \in X(T)_+,\sigma < \lambda} I_r(-\sigma)^{\oplus m_\sigma}.
\end{align*}
The goal is to show that no weight from the second socle layer of $Q$ is a weight of $M$.

First suppose that a weight from the second socle layer of $Q_1$ is a weight of $M$. Then by Lemmas \ref{lemma:secondsoclelayer} and \ref{lemma:weightsofH1Ur}, there exist simple roots $\alpha,\beta,\gamma \in \Delta$, integers $m \geq r$ and $0 \leq i < r$, and a weight $\nu$ of $L(\lambda)$ such that $-s_\alpha \cdot \lambda + p^m \gamma = p^i \beta - \nu$, that is, such that
\begin{equation} \label{eq:weightequationQ1}
\lambda - \nu = -p^i\beta + p^m \gamma + (\lambda + \rho,\alpha^\vee)\alpha.
\end{equation}
Since $\lambda \geq \nu$ and $\alpha$, $\beta$, and $\gamma$ are simple, this implies that $\beta \in J:=\set{\alpha,\gamma}$, and hence that $\nu$ is a weight of the $P_J$-module $H_J^0(\lambda)$; see \cite[II.5.21]{Jantzen:2003}. If $(\Phi,J,\lambda) = (B_2,\Delta,\wtalpha)$, then one can verify by hand for every weight $\nu$ of $H^0(\wtalpha)$ that the difference $\wtalpha - \nu$ cannot be written the form \eqref{eq:weightequationQ1}. So suppose $(\Phi,J,\lambda) \neq (B_2,\Delta,\wtalpha)$. Observe that the expression on the right-hand side of \eqref{eq:weightequationQ1} has height $-p^i + p^m + (\lambda + \rho,\alpha^\vee) \geq p$. In \cite[\S 7.2]{UGA:2011} we computed all possible differences $\lambda - \nu$ for $\nu$ a weight of $H_J^0(\lambda)$ under the assumptions that $\lambda$ is not orthogonal to $\Phi_J$ and $(\Phi,J,\lambda) \neq (B_2,\Delta,\wtalpha)$. From the calculations there, one sees by the assumptions on $p$ that the difference $\lambda - \nu$ always has height strictly less than $p$, except for a few cases when $(\Phi,J,\lambda,p) = (G_2,\Delta,\omega_2,7)$; for these extra cases in type $G_2$, one can rule out a solution to \eqref{eq:weightequationQ1} by hand. Thus, if $\lambda$ is not orthogonal to $\Phi_J$ and if $(\Phi,J,\lambda) \neq (B_2,\Delta,\wtalpha)$, then \eqref{eq:weightequationQ1} has no solution. Finally, if $\lambda$ is orthogonal to $\Phi_J$, that is, if $(\lambda,\alpha^\vee) = (\lambda,\gamma^\vee) = 0$, then $\dim H_J^0(\lambda) = 1$ and $\lambda - \nu = 0$. This also contradicts the lower bound on the height of $\lambda - \nu$, so we conclude in all cases that no weight from the second socle layer of $Q_1$ is a weight of $M$.

Next suppose that a weight from the second socle layer of $Q_2$ is a weight of $M$. Then as in the previous paragraph, there exist simple roots $\alpha,\beta,\gamma \in \Delta$, integers $0 < n < r$, $m \geq r$, and $0 \leq i < r$, and a weight $\nu$ of $L(\lambda)$ such that $-\lambda + p^n \alpha + p^m \gamma = p^i \beta - \nu$, that is, such that
\begin{equation} \label{eq:weightequationQ2}
\lambda - \nu = -p^i \beta + p^m \gamma + p^n \alpha.
\end{equation}
As in the previous paragraph, $\nu$ must be a weight of $H_J^0(\lambda)$, where $J = \set{\alpha,\gamma}$. The right-hand side of \eqref{eq:weightequationQ2} has height $-p^i + p^m + p^n \geq 2p-1$. Then an analysis similar to that of the previous paragraph shows for every weight $\nu$ of $H_J^0(\lambda)$ that the difference $\lambda - \nu$ has height strictly less than $2p-1$. Thus, no solution to \eqref{eq:weightequationQ2} is possible, and we conclude that no weight from the second socle layer of $Q_2$ is a weight of $M$.

Finally, suppose that a weight from the second socle layer of $Q_3$ is a weight of $M$. Then there exists a dominant weight $\sigma \in X(T)_+$ with $\sigma < \lambda$, simple roots $\beta,\gamma \in \Delta$, integers $m \geq r$ and $0 \leq i < r$, and a weight $\nu$ of $L(\lambda)$ such that $-\sigma + p^m \gamma = p^i \beta - \nu$. Then
\[
(-\nu,\gamma^\vee) + (\sigma,\gamma^\vee) = 2p^m - p^i(\beta,\gamma^\vee) \geq 2p^m - 2p^i \geq 2(p-1).
\]
Suppose for the moment that $\Phi$ is not of type $E_8$. Then $(-\nu,\gamma^\vee) + (\sigma,\gamma^\vee) \geq 8$, while the list of possible values for $\sigma$ shows that $(\sigma,\gamma^\vee) \in \set{0,1,2}$, hence that $(-\nu,\gamma^\vee) \geq 6$. Choose $w \in W$ such that $\ol{\gamma}:= w\gamma$ is dominant, i.e., $\ol{\gamma} \in \set{\alpha_0,\wtalpha}$. Then $w\nu$ is a weight of $L(\lambda)$, so $w_0\lambda \leq w \nu \leq \lambda$, and we get $(-\nu,\gamma^\vee) = (-w\nu,\ol{\gamma}^\vee) \leq (-w_0\lambda,\ol{\gamma}^\vee)$. Observe that $\lambda^*:=-w_0\lambda$ is either a fundamental root or is a dominant weight less than or equal to a fundamental weight, so either $-w_0 \lambda \leq \wtalpha$ or $-w_0\lambda \leq \omega_i$ for some $i$. In either case, this implies $(-w_0\lambda,\ol{\gamma}^\vee) \leq 4$ by the assumption that $\Phi$ is not of type $E_8$. Then $(-\nu,\gamma^\vee) < 6$, a contradiction. Similarly, if $\Phi$ is of type $E_8$ and $p > 5$, then one derives a contradiction from the inequalities $(-\nu,\gamma^\vee) + (\sigma,\gamma^\vee) \geq 12$, $(\sigma,\gamma^\vee) \leq 3$, and $(-w_0\lambda,\ol{\gamma}^\vee) \leq 6$. Thus, no weight from the second socle layer of $Q_3$ is a weight of $M$.
\end{proof}

\section{Applications} \label{section:applications}

\subsection{An isomorphism for first cohomology}

In this section we demonstrate how to obtain, using our new techniques, most of the cases of the main theorem of \cite{UGA:2011}. We begin with an analysis of the weights in $\Ext_{U_r}^1(k,L(\lambda))$.

\begin{lemma} \label{lemma:Ext1weights}
Suppose $\lambda \in X(T)_+$ is a dominant root or is less than or equal to a fundamental weight. Assume $p > 5$ if $\Phi$ is of type $E_8$ or $G_2$, and $p > 3$ otherwise. Then $\Ext_{U_r}^1(k,L(\lambda))^{\Tfq} = \Ext_{U_r}^1(k,L(\lambda))^T$, and
\[
p^r > \max \{-(\nu,\gamma^\vee): \gamma \in \Delta, \nu \in X(T),\Ext_{U_r}^1(k,L(\lambda))_\nu \neq 0 \}.
\]
\end{lemma}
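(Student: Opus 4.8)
The plan is to combine the contravariant duality for $U_{r}$ with the explicit computation of $\Ext_{U_{r}}^{1}(L(\lambda),k)$ in Theorem~\ref{theorem:semisimplicity}, and then to run a weight-by-weight analysis. Since $U_{r}$ is a finite group scheme normal in $B$, the duality $M\mapsto M^{*}$ on finite-dimensional $U_{r}$-modules is $B$-equivariant, so it yields a $B/U_{r}$-module isomorphism $\Ext_{U_{r}}^{1}(k,L(\lambda))\cong\Ext_{U_{r}}^{1}(L(\lambda)^{*},k)=\Ext_{U_{r}}^{1}(L(\lambda^{*}),k)$ with $\lambda^{*}:=-w_{0}\lambda$. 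As $-w_{0}$ preserves both the set of dominant roots and the set of dominant weights $\leq$ a fundamental weight, $\lambda^{*}$ satisfies the hypotheses of Theorem~\ref{theorem:semisimplicity}; applying that theorem with $\lambda^{*}$ in place of $\lambda$, the module $\Ext_{U_{r}}^{1}(k,L(\lambda))$ is semisimple for $B/U_{r}$ and its $T$-weights are exactly those occurring in
\[
\bigoplus_{\alpha\in\Delta}-s_{\alpha}\cdot\lambda^{*}\ \oplus\ \bigoplus_{\substack{\alpha\in\Delta\\0<n<r}}-(\lambda^{*}-p^{n}\alpha)\ \oplus\ \bigoplus_{\substack{\sigma\in X(T)_{+}\\\sigma<\lambda^{*}}}(-\sigma)^{\oplus m_{\sigma}},\qquad m_{\sigma}=\dim\Ext_{G}^{1}(L(\lambda^{*}),H^{0}(\sigma)).
\]
Call these the weights of \emph{type} (i), (ii), (iii). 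By Remark~\ref{remark:minimaldomroot} and the classification of the admissible $\lambda^{*}$ (Table~\ref{table:problemweights} and \cite[\S7.1]{UGA:2011}), both $\lambda^{*}$ and each $\sigma$ in type (iii) satisfy $0\le(\lambda^{*},\gamma^{\vee})\le3$ and $0\le(\sigma,\gamma^{\vee})\le3$ for all $\gamma\in\Delta$, with $(\lambda^{*},\gamma^{\vee})\le1$ for all $\gamma$ when $\Phi$ has type $G_{2}$.

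\emph{The bound on $p^{r}$.} Fix $\gamma\in\Delta$ and a weight $\nu$ as above; I must show $(\nu,\gamma^{\vee})>-p^{r}$. If $\nu=-\sigma$ is of type (iii), then $(\nu,\gamma^{\vee})=-(\sigma,\gamma^{\vee})\ge-3>-p^{r}$. If $\nu=-(\lambda^{*}-p^{n}\alpha)$ is of type (ii), then $(\nu,\gamma^{\vee})=-(\lambda^{*},\gamma^{\vee})+p^{n}(\alpha,\gamma^{\vee})\ge-3-3p^{r-1}$ (using $n\le r-1$ and $(\alpha,\gamma^{\vee})\ge-3$), and $3+3p^{r-1}<p^{r}$ since $p^{r-1}(p-3)>3$ under our hypotheses (here $r\ge2$). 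If $\nu=-s_{\alpha}\cdot\lambda^{*}$ is of type (i): for $\gamma=\alpha$ one gets $(\nu,\alpha^{\vee})=(\lambda^{*},\alpha^{\vee})+2>0$, and for $\gamma\ne\alpha$ one gets $(\nu,\gamma^{\vee})=-(\lambda^{*},\gamma^{\vee})+\big((\lambda^{*},\alpha^{\vee})+1\big)(\alpha,\gamma^{\vee})$, where a type-by-type check of the (finitely many) admissible $\lambda^{*}$—using that the coefficient $(\lambda^{*},\alpha^{\vee})+1$ is large only at single bonds of the Dynkin diagram and that triple bonds only occur in type $G_{2}$—gives $(\nu,\gamma^{\vee})\ge-4$ outside type $G_{2}$ and $(\nu,\gamma^{\vee})\ge-6$ in type $G_{2}$; in each case this is $>-p^{r}$ because $p\ge5$ (resp.\ $p\ge7$).

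\emph{The invariance condition.} As a rational $T$-module, $\Ext_{U_{r}}^{1}(k,L(\lambda))$ is the direct sum of its weight spaces, so $\Ext_{U_{r}}^{1}(k,L(\lambda))^{T}$ is the span of the weight-zero part while $\Ext_{U_{r}}^{1}(k,L(\lambda))^{\Tfq}$ is the span of the weight spaces $M_{\nu}$ with $\nu\in(q-1)X(T)$, $q=p^{r}$ (a character of the split torus $T$ is trivial on $\Tfq$ exactly when it is divisible by $q-1$). Hence it suffices to show that no nonzero weight of type (i), (ii), (iii) lies in $(q-1)X(T)$. Type (iii): if $-\sigma=(q-1)\xi$ with $\xi\ne0$ then $-\xi$ is a nonzero dominant weight, so $(\sigma,\gamma^{\vee})=(q-1)(-\xi,\gamma^{\vee})\ge q-1>3$ for some $\gamma$, contradicting $(\sigma,\gamma^{\vee})\le3$. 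Type (ii): pairing with $\alpha^{\vee}$ yields the integer $(\nu,\alpha^{\vee})=2p^{n}-(\lambda^{*},\alpha^{\vee})$ with $0<(\nu,\alpha^{\vee})\le2p^{r-1}<q-1$ (again as $p^{r-1}(p-2)>1$), so $q-1\nmid(\nu,\alpha^{\vee})$ and $\nu\notin(q-1)X(T)$. Type (i): pairing with $\alpha^{\vee}$ yields $(\nu,\alpha^{\vee})=(\lambda^{*},\alpha^{\vee})+2\in\{2,3,4,5\}$, so divisibility by $q-1\ge4$ forces $(\nu,\alpha^{\vee})=q-1\in\{4,5\}$; since $q$ is a power of $p$ this forces $q=5$, $q-1=4$, and $(\lambda^{*},\alpha^{\vee})=2$, which pins down finitely many pairs $(\lambda^{*},\alpha)$—occurring only for $\lambda^{*}$ a dominant root—and for each of these one verifies directly, via a second pairing $(\nu,\delta^{\vee})$, that $\nu$ is not divisible by $4$.

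\emph{Main obstacle.} The first and third steps are essentially formal; the real labour lies in the type-by-type bookkeeping behind the estimate in the second step and the residual $q=5$ cases of the third, where the crude inequalities fail for the multiply-laced types and one must instead invoke the explicit lists of the admissible weights $\lambda^{*}$ and of the weight differences $\lambda^{*}-\mu$ for $\mu$ a weight of the relevant parabolically induced modules tabulated in \cite[\S7]{UGA:2011}. This is precisely the computation that mirrors, and extends from $r=1$ to arbitrary $r$, the analysis carried out in \cite[\S\S3--4]{UGA:2011}.
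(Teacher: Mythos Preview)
Your approach is essentially the same as the paper's: reduce to $\Ext_{U_r}^1(L(\lambda^*),k)$ via duality, invoke Theorem~\ref{theorem:semisimplicity} to obtain the explicit weight list, and then bound pairings $(\nu,\gamma^\vee)$ type by type. Your organisation is a bit different---the paper bounds $|(\lambda^*-p^n\alpha,\gamma^\vee)|$ for \emph{all} $\gamma$ in the type~(ii) invariance check, whereas you pair only with $\alpha^\vee$, which is a neat shortcut---but the substance is identical.

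There is one genuine slip. In the type~(i) invariance argument you assert that $(\lambda^*,\alpha^\vee)=2$ ``occur[s] only for $\lambda^*$ a dominant root.'' This is false: among the admissible $\lambda^*$ in \cite[\S7.1]{UGA:2011} one also finds $2\omega_7$ and $2\omega_1$ in type $E_7$ and $2\omega_4$ in type $F_4$, none of which is a root, and for these types the hypothesis $p>3$ permits $q=5$. Your strategy survives---one simply has more residual $q=5$ pairs $(\lambda^*,\alpha)$ to feed into the ``second pairing $(\nu,\delta^\vee)$'' verification (for instance, for $\lambda^*=2\omega_7$ in $E_7$ one computes $-s_{\alpha_7}\cdot\lambda^*=-3\omega_6+4\omega_7\notin 4X(T)$)---but the parenthetical as written is wrong and the list of cases requiring direct verification is understated. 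The paper sidesteps this by not attempting the reduction via $(\nu,\alpha^\vee)$ at all; it simply checks for every admissible $\lambda^*$ and every $\alpha\in\Delta$ that $s_\alpha\cdot\lambda^*\notin(p^r-1)X(T)$ using the Cartan matrix.
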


\begin{proof}
Observe that $\lambda^* := -w_0\lambda$ is also a dominant root or is a dominant weight less than or equal to a fundamental weight. Then by Theorem \ref{theorem:semisimplicity}, there exists an isomorphism of $B/U_r$-modules
\begin{equation} \label{eq:Ursemisimplicity}
\Ext_{U_r}^1(k,L(\lambda)) \cong \Ext_{U_r}^1(L(\lambda^*),k) \cong
\bigoplus_{\alpha \in \Delta} -s_\alpha \cdot \lambda^*
\oplus \bigoplus_{\substack{\alpha \in \Delta \\ 0 < n < r}} -(\lambda^* - p^n \alpha)
\oplus \bigoplus_{\substack{\sigma \in X(T)_+ \\ \sigma < \lambda^*}} (-\sigma)^{\oplus m_\sigma}
\end{equation}
where $m_\sigma = \dim \Ext_G^1(L(\lambda^*),H^0(\sigma))$. Using Table \ref{table:problemweights} and \cite[\S 7.1]{UGA:2011} for the lists of possible values for $\lambda$, and using the Cartan matrix to rewrite a given simple root as a sum of fundamental weights, one can check for all $\alpha \in \Delta$ that the assumptions on $p$ imply that $s_\alpha \cdot \lambda^* \notin (p^r-1)X(T)$. Similarly, by direct inspection of the possible $\sigma \in X(T)_+$ with $\sigma < \lambda^*$, one sees that $\sigma \in (p^r-1)X(T)$ only if $\sigma = 0$. Then to verify the first claim of the lemma, it suffices now to show for all $\alpha \in \Delta$ and $0 < n < r$ that $\lambda^* - p^n\alpha \in (p^r-1)X(T)$ only if $\lambda^* - p^n \alpha = 0$. So let $\alpha \in \Delta$ and $0 < n < r$; in particular, assume $r \geq 2$. Then for $\gamma \in \Delta$ one has
\begin{align*}
\abs{(\lambda^* - p^n\alpha,\gamma^\vee)} &\leq (\lambda^*,\gamma^\vee) + p^n \abs{(\alpha,\gamma^\vee)} \\
&< (p-1) + p^{r-1} (p-1) & \text{by the assumptions on $p$,} \\
&\leq p^r-1 & \text{because $r \geq 2$.}
\end{align*}
Then $(\lambda^* - p^n\alpha,\gamma^\vee) \in (p^r-1)\Z$ only if $(\lambda^* - p^n\alpha,\gamma^\vee) = 0$. Since $\gamma \in \Delta$ was arbitrary, we conclude $\lambda^* - p^n\alpha \in (p^r-1)X(T)$ only if $\lambda^* - p^n\alpha = 0$. This finishes the verification of the first claim of the lemma.

Now let $\alpha,\gamma \in \Delta$. Using the lists of possible values for $\lambda$, one can verify the following inequalities:
\begin{align*}
(s_\alpha \cdot \lambda^*,\gamma^\vee) &\leq 2 \quad \text{if $\Phi \in \set{D_n,E_6}$,} \\
(s_\alpha \cdot \lambda^*,\gamma^\vee) &\leq 3 \quad \text{if $\Phi \in \set{A_n,E_7}$,} \\
(s_\alpha \cdot \lambda^*,\gamma^\vee) &\leq 4 \quad \text{if $\Phi \in \set{B_n,C_n,E_8,F_4}$, and} \\
(s_\alpha \cdot \lambda^*,\gamma^\vee) &\leq 6 \quad \text{if $\Phi=G_2$.}
\end{align*}
For example, suppose $\Phi = D_n$. Then $\lambda^* = 0$ or $\lambda^* = \omega_j$ for some $j$. Write $(s_\alpha \cdot \lambda^*,\gamma^\vee) = (\lambda^*,\gamma^\vee) - (\lambda^*+\rho,\alpha^\vee)(\alpha,\gamma^\vee)$. We have $(\lambda^*,\gamma^\vee) \in \set{0,1}$, $(\lambda^*,\alpha^\vee) \in \set{0,1}$, and $(\alpha,\gamma^\vee) \geq -1$. Also, $(\lambda^*,\gamma^\vee) = (\lambda^*,\alpha^\vee) = 1$ only if $\alpha = \gamma$, in which case $(\alpha,\gamma^\vee)=2$. Combining these observations, we get the inequality $(s_\alpha \cdot \lambda^*,\gamma^\vee) \leq 2$. A similar elementary analysis establishes the inequalities for the other Lie types. In all cases, $p^r > -(-s_\alpha \cdot \lambda^*,\gamma^\vee)$. Similarly, one gets $p^r > -(-\sigma,\gamma^\vee)$ for all $\sigma \in X(T)_+$ with $\sigma < \lambda^*$ by considering the list of all possible values for $\sigma$. Finally, if $\alpha \in \Delta$ and $0 < n < r$, then the inequality $p^r > -(-(\lambda - p^n\alpha),\gamma^\vee)$ was observed in the previous paragraph.
\end{proof}

\begin{theorem} \textup{(cf.\ \cite[Theorem 1.2.1]{UGA:2011})} \label{theorem:H1Gfqiso}
Suppose $\lambda \in X(T)_+$ is a dominant root or is less than or equal to a fundamental weight. Assume that $p > 5$ if $\Phi$ is of type $E_8$ or $G_2$, and that $p > 3$ otherwise. Then the restriction maps
\[
\opH^1(G,L(\lambda)) \rightarrow \opH^1(\Gfq,L(\lambda)) \quad \text{and} \quad \opH^2(G,L(\lambda)) \rightarrow \opH^2(\Gfq,L(\lambda))
\]
are an isomorphism and an injection, respectively.
\end{theorem}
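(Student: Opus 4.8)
The plan is to deduce Theorem \ref{theorem:H1Gfqiso} directly from Theorem \ref{theorem:vanishingconditionforExt1iso} by verifying its two hypotheses for every $\lambda$ in the stated list. Recall that Theorem \ref{theorem:vanishingconditionforExt1iso} requires (i) that $\Ext_{U_r}^1(k,L(\lambda))$ be semisimple as a $B/U_r$-module, and (ii) that $\Ext_{U_r}^1(k,L(\lambda))^{\Tfq} = \Ext_{U_r}^1(k,L(\lambda))^T$. Both are already available: hypothesis (i) is supplied by Theorem \ref{theorem:semisimplicity} applied to $L(\lambda^*)$ (noting $\Ext_{U_r}^1(k,L(\lambda)) \cong \Ext_{U_r}^1(L(\lambda^*),k)$ and that $\lambda^* := -w_0\lambda$ is again a dominant root or $\leq$ a fundamental weight), and hypothesis (ii) is the first assertion of Lemma \ref{lemma:Ext1weights}. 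So the proof is essentially a one-line invocation.

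First I would record the reduction $\Ext_{U_r}^1(k,L(\lambda)) \cong \Ext_{U_r}^1(L(\lambda^*),k)$ as $B/U_r$-modules and observe that the class of weights under consideration is stable under $\lambda \mapsto \lambda^*$, so the hypotheses of Theorems \ref{theorem:semisimplicity} and \ref{theorem:Ext1weights} apply with the same conditions on $p$. Then I would cite Theorem \ref{theorem:semisimplicity} to conclude semisimplicity of $\Ext_{U_r}^1(k,L(\lambda))$ as a $B/U_r$-module, and the first conclusion of Lemma \ref{lemma:Ext1weights} to conclude the equality of $\Tfq$- and $T$-invariants. Finally, I would apply Theorem \ref{theorem:vanishingconditionforExt1iso} verbatim to obtain that $\res\colon \opH^1(G,L(\lambda)) \to \opH^1(\Gfq,L(\lambda))$ is an isomorphism and $\res\colon \opH^2(G,L(\lambda)) \to \opH^2(\Gfq,L(\lambda))$ is an injection.

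There is essentially no obstacle here: all the real work has already been done in Sections \ref{section:comparingalgebraicandfinite} and \ref{section:Urcohomology}. The only point requiring a moment's care is making sure the $p$-conditions in Theorem \ref{theorem:semisimplicity} and in Lemma \ref{lemma:Ext1weights} match those stated in Theorem \ref{theorem:H1Gfqiso} (they do: $p > 5$ in types $E_8$ and $G_2$, $p > 3$ otherwise), and that the dominance class of $\lambda$ is genuinely preserved under $*$, which follows from $-w_0$ permuting the fundamental weights and fixing the set of dominant roots $\{\alpha_0,\wtalpha\}$. Thus the theorem follows immediately.
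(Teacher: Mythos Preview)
Your proposal is correct and matches the paper's own proof, which simply states that the theorem follows from Theorems \ref{theorem:vanishingconditionforExt1iso} and \ref{theorem:semisimplicity} together with Lemma \ref{lemma:Ext1weights}. Your extra remarks about the duality $\Ext_{U_r}^1(k,L(\lambda)) \cong \Ext_{U_r}^1(L(\lambda^*),k)$ and the stability of the weight class under $\lambda \mapsto \lambda^*$ are exactly the observations already built into the proof of Lemma \ref{lemma:Ext1weights}, so you have unpacked the paper's one-line citation without deviating from its approach.
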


\begin{proof}
The theorem follows from Theorems \ref{theorem:vanishingconditionforExt1iso}, \ref{theorem:semisimplicity}, and Lemma \ref{lemma:Ext1weights}.
\end{proof}

\begin{remark}
If one restricts attention to dominant weights $\lambda \in X(T)_+$ that are less than or equal to a fundamental dominant weight, then Corollary \ref{corollary:socleofUrcohomology}, Theorem \ref{theorem:semisimplicity}, and Lemma \ref{lemma:Ext1weights} continue to hold, with exactly the same proofs, if $\Phi$ is of type $A_n$ or $D_n$, $p=3$, and $q > 3$. Thus, the methods of this paper yield a new proof of \cite[Theorem 1.2.1]{UGA:2011} for these cases also.
\end{remark}

\subsection{Weight spaces in second cohomology for \texorpdfstring{$U_1$}{U1}}

Suppose $\lambda \in X(T)_+$ is a dominant root or is less than or equal to a fundamental weight. We are now interested in finding bounds on $p$ and $q = p^r$ similar to those in Theorem \ref{theorem:H1Gfqiso} for which the restriction map $\opH^2(G,L(\lambda)) \rightarrow \opH^2(\Gfq,L(\lambda))$ is an isomorphism. For this we analyze the weights of the cohomology group $\Ext_{U_r}^2(k,L(\lambda))$. We begin by analyzing the space of $\Tfq$-invariants in $\Ext_{U_1}^2(k,L(\lambda))$.

\begin{lemma} \label{lemma:Ext2U1Tfqinvariants}
Suppose $\lambda \in X(T)_+$ is a dominant root or is less than or equal to a fundamental dominant weight. Assume that $p > 3$, and that $q > 5$ if $\Phi$ is of type $E_7$, $E_8$, $F_4$, or $G_2$. Assume also that $q > 5$ if $\Phi$ is of type $C_n$ and $\lambda \in \Z\Phi$ (i.e., if $\lambda$ is the zero weight, a root, or a fundamental weight indexed by an even integer). If $q = p$, assume that $\lambda$ is not one of the weights listed in Table \ref{table:problemweights}. Then $\Ext_{U_1}^2(k,L(\lambda))^{\Tfq} = \Ext_{U_1}^2(k,L(\lambda))^T$.
\end{lemma}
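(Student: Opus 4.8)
The plan is to reduce the asserted identity to a statement about $T$-weights. Since $\Ext_{U_1}^2(k,L(\lambda))$ is a rational $B/U_1$-module it is in particular a rational $T$-module, and a weight $\nu\in X(T)$ restricts trivially to $\Tfq$ precisely when $\nu\in(q-1)X(T)$ (here $q=p^r$, and we must allow $r>1$). Hence the equality $\Ext_{U_1}^2(k,L(\lambda))^{\Tfq}=\Ext_{U_1}^2(k,L(\lambda))^T$ holds if and only if the only $T$-weight of $\Ext_{U_1}^2(k,L(\lambda))\cong\opH^2(U_1,L(\lambda))$ lying in $(q-1)X(T)$ is the zero weight, and it is this assertion that I would establish.

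First I would bound the $T$-weights of $\opH^2(U_1,L(\lambda))$. Filtering $L(\lambda)$ by $B$-submodules with one-dimensional sections --- the sections being copies of the $T$-weights $\mu$ of $L(\lambda)$ --- and iterating the long exact sequences in $\opH^\bullet(U_1,-)$, as in the proof of Lemma~\ref{lemma:weightsofH1Ur} and the reference \cite[\S 2.5]{UGA:2009} used there, shows that every weight of $\opH^2(U_1,L(\lambda))$ has the form $\eta+\mu$ with $\eta$ a weight of $\opH^2(U_1,k)$ and $\mu$ a weight of $L(\lambda)$. Next I would determine the $T$-weights of $\opH^2(U_1,k)$, by the same kind of argument used in Lemma~\ref{lemma:weightsofH1Ur}: via the comparison with restricted Lie algebra cohomology \cite[I.9.20]{Jantzen:2003}, these weights lie among the weights of the $T$-module $\opH^2(\fu,k)$ --- hence, as $\opH^2(\fu,k)$ is a $T$-subquotient of $\bigwedge^2\fu^*$, among the sums $\alpha+\beta$ of two positive roots --- together with the Frobenius-twisted classes coming from $\opH^1(U_1,k)^{(1)}$, which have weights $p\alpha$ for $\alpha\in\Delta$. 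Combining the two bounds, every weight $\nu$ of $\Ext_{U_1}^2(k,L(\lambda))$ has one of two shapes: $\nu=\alpha+\beta+\mu$ with $\alpha,\beta$ positive roots and $\mu$ a weight of $L(\lambda)$ (``exterior type''), or $\nu=p\alpha+\mu$ with $\alpha\in\Delta$ and $\mu$ a weight of $L(\lambda)$ (``Frobenius type'').

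It then remains to check, case by case, that no nonzero $\nu$ of either shape lies in $(q-1)X(T)$, using the explicit lists of admissible $\lambda$ --- the dominant roots of Table~\ref{table:problemweights} and the weights $\le$ a fundamental weight of \cite[\S 7]{UGA:2011} --- together with the resulting uniform bounds on $(\mu,\gamma^\vee)$ for $\gamma\in\Delta$ and $\mu$ a weight of $L(\lambda)$, in the spirit of the proof of Lemma~\ref{lemma:Ext1weights}. For $\nu$ of exterior type, $|(\nu,\gamma^\vee)|$ is bounded for every $\gamma\in\Delta$ by a small constant depending only on $\Phi$ and $\lambda$; comparing this constant to $q-1$ type by type shows that it can fail to be strictly less than $q-1$ (so that $\nu$ could be a nonzero element of $(q-1)X(T)$, such as $4\omega_1$ in type $A_2$ arising from $\mu=\omega_1$) only for a short list of triples $(\Phi,\lambda,q)$, and the hypotheses of the lemma --- $q>5$ in types $E_7,E_8,F_4,G_2$, $q>5$ in type $C_n$ when $\lambda\in\Z\Phi$, and the exclusion of the weights of Table~\ref{table:problemweights} (including $\omega_1,\omega_2$ in type $A_2$) when $q=p$ --- are precisely what remove those exceptions. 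For $\nu=p\alpha+\mu$ of Frobenius type, the case $r\ge2$ follows from $|(\nu,\gamma^\vee)|\le 2p+|(\mu,\gamma^\vee)|<p^r-1$ (using that $p>3$) together with the fact that $-p\alpha$ is never a weight of $L(\lambda)$, so $\nu\ne0$ and $\nu\notin(q-1)X(T)$; when $q=p$, one writes $\nu=(p-1)\alpha+(\alpha+\mu)$, so $\nu\in(p-1)X(T)$ is equivalent to $\alpha+\mu\in(p-1)X(T)$, and the same kind of weight bookkeeping forces $\mu=-\alpha$, which is a weight of $L(\lambda)$ only when $\lambda$ is a dominant root or an $\alpha_0$ that happens to be a fundamental weight --- that is, precisely one of the weights of Table~\ref{table:problemweights}, excluded by hypothesis at $q=p$. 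Assembling the cases yields the lemma.

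The step I expect to be the main obstacle is the case-by-case weight bookkeeping: the bound on $|(\eta+\mu,\gamma^\vee)|$ must be compared against $q-1$ separately in each root system and, within a root system, weight by weight, and the inequality is sharpest --- indeed equality occurs --- exactly in the restricted or excluded situations, namely small $q$ in the exceptional types, the weights of Table~\ref{table:problemweights} at $q=p$, and type $C_n$ with $\lambda$ in the root lattice, where the intricate submodule structure of $L(\omega_{2k})$ for the symplectic group (in the sense of Adamovich) makes the weight estimates least generous. A subsidiary point to settle carefully is the precise set of $T$-weights of $\opH^2(U_1,k)$ --- the degree-two analogue of Lemma~\ref{lemma:weightsofH1Ur} --- in particular checking that the hypothesis $p>3$ already rules out any extra low-degree cohomology classes beyond the exterior and Frobenius-twisted ones.
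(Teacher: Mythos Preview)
Your overall plan --- reduce to showing no nonzero weight of $\opH^2(U_1,L(\lambda))$ lies in $(q-1)X(T)$, then bound those weights via the spectral sequence of \cite[I.9.20]{Jantzen:2003} --- is exactly the paper's approach. But your description of the ``Frobenius type'' weights contains a genuine error that breaks the $q=p$ case analysis.

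You assert that the extra classes in $\opH^2(U_1,k)$ beyond $\opH^2(\fu,k)$ come from $\opH^1(U_1,k)^{(1)}$, and hence have weights $p\alpha$ with $\alpha\in\Delta$. That is not what \cite[I.9.20]{Jantzen:2003} gives: the spectral sequence there has $E_2^{2,0}=S^1(\fu^*)^{(1)}=(\fu^*)^{(1)}$, whose weights are $p\beta$ for \emph{every} $\beta\in\Phi^+$, not just simple $\beta$. Moreover the differential $d_2:E_2^{0,1}\to E_2^{2,0}$ is zero (it is $T$-equivariant, and no weight of $\opH^1(\fu,k)$ equals any $p\beta$), so all of $(\fu^*)^{(1)}$ survives to $\opH^2(U_1,k)$. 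Thus your Frobenius-type weight set is too small, and your $q=p$ argument --- writing $\nu=(p-1)\alpha+(\alpha+\mu)$ and concluding $\mu=-\alpha$ with $\alpha$ simple --- does not cover the weights $p\beta+\mu$ with $\beta$ non-simple that actually occur. Indeed, for $\lambda=\wtalpha$ the weight $(p-1)\wtalpha$ lies in $\opH^2(U_1,L(\lambda))$ precisely via $\beta=\wtalpha$ (see Remark~\ref{remark:nonzeroTfpweight}), and this does not fit your simple-root template.

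The paper avoids this by applying the spectral sequence of \cite[I.9.20]{Jantzen:2003} with coefficients $L(\lambda)$ \emph{directly}, rather than first reducing to $\opH^2(U_1,k)$ via a filtration of $L(\lambda)$. The payoff is that $E_2^{2,0}=(\fu^*)^{(1)}\otimes\opH^0(\fu,L(\lambda))=(\fu^*)^{(1)}\otimes w_0\lambda$ involves only the single lowest weight $-\lambda^*$ of $L(\lambda)$, so the Frobenius-type weights are exactly $p\beta-\lambda^*$ with $\beta\in\Phi^+$; the condition $p\beta-\lambda^*\in(p-1)X(T)$ then reduces to $\beta-\lambda^*\in(p-1)X(T)$, and the bookkeeping becomes tractable and forces $\lambda=\beta^*$ a dominant root. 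Your coarser filtration approach replaces $-\lambda^*$ by an arbitrary weight $\mu$ of $L(\lambda)$, which, once you correct $\alpha\in\Delta$ to $\beta\in\Phi^+$, would substantially complicate the case analysis and may not close without further work.
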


\begin{proof}
By \cite[I.9.20]{Jantzen:2003}, there exists a spectral sequence of $B$-modules
\begin{equation} \label{eq:U1Liealgspecseq}
E_2^{2i,j} = S^i(\fu^*)^{(1)} \otimes \opH^{j}(\fu,L(\lambda)) \Rightarrow \Ext_{U_1}^{2i+j}(k,L(\lambda)),
\end{equation}
with $E_2^{i,j}=0$ if $i$ is odd. Then $\Ext_{U_1}^2(k,L(\lambda))$ is a $T$-module subquotient of $E_2^{2,0} \oplus E_2^{0,2}$. To prove the lemma, it suffices to show that $(E_2^{2,0} \oplus E_2^{0,2})^{\Tfq} = (E_2^{2,0} \oplus E_2^{0,2})^T$.

From Table \ref{table:problemweights} and \cite[\S 7.1]{UGA:2011}, one has for all $\gamma \in \Delta$ that $(\lambda,\gamma^\vee) \leq 3$. In particular, $\lambda \in X_1(T)$, so $\opH^0(\fu,L(\lambda)) = \Hom_{\fu}(k,L(\lambda)) \cong L(\lambda)_{w_0\lambda}$. Then $E_2^{2,0}$ is isomorphic as a $B$-module to $(\fu^*)^{(1)} \otimes w_0\lambda$, which has weights of the form $p\beta - \lambda^*$ for $\beta \in \Phi^+$. Let $\gamma \in \Delta$. Recall that $\abs{(\beta,\gamma^\vee)} \leq 3$ if $\Phi$ is of type $G_2$, and that $\abs{(\beta,\gamma^\vee)} \leq 2$ otherwise. Then
\[
\abs{(p\beta - \lambda^*,\gamma^\vee)} \leq p \abs{(\beta,\gamma^\vee)} + (\lambda^*,\gamma^\vee) \leq 3p+3.
\]
Since $p > 3$, this expression is strictly less than $p^r-1$ if $r \geq 2$. It follows then for $r \geq 2$ that $p\beta - \lambda^* \in (p^r-1)X(T)$ only if $p\beta - \lambda^* = 0$, and hence that $(E_2^{2,0})^{\Tfq} = (E_2^{2,0})^T$. Now suppose $r = 1$, so that $q = p$, and suppose that $p\beta - \lambda^* \in (p-1)X(T)$. Then also $\beta - \lambda^* \in (p-1)X(T)$. Using the previous estimates for $\abs{(\beta,\gamma^\vee)}$, and using Table \ref{table:problemweights} and \cite[\S 7.1]{UGA:2011} for the possible values of $(\lambda^*,\gamma^\vee)$, one has
\begin{align*}
-4 &\leq (\beta-\lambda^*,\gamma^\vee) \leq 2 \quad \text{if $\Phi \neq E_8,G_2$,} \\
-5 &\leq (\beta-\lambda^*,\gamma^\vee) \leq 2 \quad \text{if $\Phi = E_8$, and} \\
-4 &\leq (\beta-\lambda^*,\gamma^\vee) \leq 3 \quad \text{if $\Phi = G_2$.}
\end{align*}
It then follows for $p = q > 5$ that $\beta - \lambda^* \in (p-1)X(T)$ only if $\beta - \lambda^* = 0$, that is, only if $\lambda = \beta^*$ is a dominant root. By assumption, $\lambda$ is not equal to a dominant root when $p = q$, so we conclude for $p = q > 5$ that $(E_2^{2,0})^{\Tfq} = (E_2^{2,0})^T$. Finally, suppose $p = q = 5$. Then $\Phi$ is of classical type or of type $E_6$, and $-4 \leq (\beta - \lambda^*,\gamma^\vee) \leq 2$. Moreover, the extreme value $-4$ is obtained only if $(\beta,\gamma^\vee) = -2$ and $(\lambda^*,\gamma^\vee) = 2$. Since by assumption $\lambda$ is not equal to a dominant root when $q = p$, we must have $\beta - \lambda^* \neq 0$, and consequently there must exist some $\gamma \in \Delta$ for which $(\beta,\gamma^\vee) = -2$ and $(\lambda^*,\gamma^\vee) = 2$. Since $\Phi$ is of classical type or of type $E_6$, this implies from the list of possible values for $\lambda$ that $\Phi$ is of type $B_2$ or $C_n$ and that $\lambda^* = \wtalpha$, hence that $\lambda = \wtalpha$. This again contradicts the assumptions on $\lambda$ when $p=q$, so we conclude that $(E_2^{2,0})^{\Tfq} = (E_2^{2,0})^T$.

Now consider the space $E_2^{0,2} = \opH^2(\fu,L(\lambda))$. First, $\opH^2(\fu,L(\lambda))$ identifies with a $T$-submodule of $\opH^2(\fu,k) \otimes L(\lambda)$ by \cite[Proposition 2.5.1]{UGA:2009}. Since $p > 3$, it then follows from \cite[Theorem 4.4]{Bendel:2007} that every weight of $\opH^2(\fu,L(\lambda))$ has the form $-w \cdot 0 + \nu$ for some $w \in W$ with $\ell(w) = 2$ and some weight $\nu$ of $L(\lambda)$. So let $-w \cdot 0 + \nu$ be a weight of this form, and suppose $-w \cdot 0 + \nu \in (p^r-1)X(T)$, say $-w \cdot 0 + \nu = (p^r-1)\sigma$ with $\sigma \in X(T)$. The term $-w \cdot 0$ is a sum of two linearly independent roots, so $(p^r-1) \sigma = \beta_1 + \beta_2 + \nu$ for some $\beta_1,\beta_2 \in \Phi$ with $\beta_1 \neq \pm \beta_2$. Conjugating this equation by an appropriate element of $W$ if necessary, we may assume that $\nu \in X(T)_+$. Since the weights of $L(\lambda)$ are stable under $W$, even after conjugating we still have $\nu \leq \lambda$, so that $\nu$ is either a dominant root or is a dominant weight less than or equal to a fundamental weight. With these assumptions on $\beta_1$, $\beta_2$ and $\nu$, we have verified by direct calculation that if $\Phi$ is of exceptional type, then the assumptions on $p$ and $q$ imply that the only solution to the equation $\beta_1 + \beta_2 + \nu = (p^r-1)\sigma$ is $\sigma = 0$.\footnote{We verified this assertion via an exhaustive search using the computer program GAP \cite{GAP4}. Our GAP code can be downloaded as an ancillary file from this paper's arXiv preprint page. The code is also available on the web at \href{http://www.math.uga.edu/~nakano/vigre/vigre.html}{http://www.math.uga.edu/$\sim$nakano/vigre/vigre.html}.} Thus $(E_2^{0,2})^{\Tfq} = (E_2^{0,2})^T$ if $\Phi$ is of exceptional type.

It remains to show that $(E_2^{0,2})^{\Tfq} = (E_2^{0,2})^T$ when $\Phi$ is of classical type. We first show this is true for $q = p^r > 5$. Suppose $\beta_1 + \beta_2 + \nu = (p^r-1)\sigma$ with $\nu \in X(T)_+$, $\nu \leq \lambda$ as above. Suppose for the moment that $(\nu,\gamma^\vee) \leq 1$ for all $\gamma \in \Delta$, that is, that $\nu$ is a $2$-restricted dominant weight. Then
\[
(p^r-1) \abs{(\sigma,\gamma^\vee)} \leq \abs{(\beta_1,\gamma^\vee)} + \abs{(\beta_2,\gamma^\vee)} + (\nu,\gamma^\vee) \leq 5.
\]
Thus, if $\nu$ is a $2$-restricted weight and $q = p^r>5$, then $(\sigma,\gamma^\vee) = 0$ for all $\gamma \in \Delta$, and hence $\sigma = 0$. On the other hand, if $\nu$ is not a $2$-restricted weight, then from Table \ref{table:problemweights} and \cite[\S 7.1]{UGA:2011} we see that necessarily $\nu = \lambda = \wtalpha$, and $\Phi$ must be of type $A_1$, $B_2$, or $C_n$. For type $A_1$ one has $E_2^{0,2} = \opH^2(\fu,L(\wtalpha)) = 0$ because $\dim \fu = 1$. For types $B_2$ and $C_n$ and $q = p^r > 5$, one can check (e.g., using the $\ve$-basis construction of $\Phi$ \cite[\S 12.1]{Humphreys:1978}) for all pairs of non-proportional roots $\beta_1,\beta_2 \in \Phi$ that $\beta_1 + \beta_2 + \wtalpha \in (p^r-1)X(T)$ only if $\beta_1 + \beta_2 + \wtalpha = 0$. Thus, $(E_2^{0,2})^{\Tfq} = (E_2^{0,2})^T$ if $\Phi$ is of classical type and $q = p^r > 5$.

Finally, suppose $\Phi$ is of classical type and $p = q = 5$. Then by assumption $\lambda$ is not one of the weights listed in Table \ref{table:problemweights}, so $\lambda$ must be less than or equal to a fundamental weight. In addition, if $\Phi = B_2$ then $\lambda = \omega_2$, and if $\Phi$ is of type $C_n$, then $\lambda = \omega_j$ with $j$ odd. We show under these conditions that a weight $-w\cdot 0 + \nu$ of $\opH^2(\fu,L(\lambda))$ is an element of $4X(T)$ only if it is zero. First suppose $\Phi$ is of type $C_n$. Since $\lambda = \omega_j$ with $j$ odd, one has $\lambda \notin \Z\Phi$, hence $\nu \notin \Z\Phi$ for all weights $\nu$ of $L(\lambda)$. Then for all $w \in W$ and all weight $\nu$ of $L(\lambda)$, one has $-w \cdot 0 + \nu \notin 4X(T) \subseteq \Z\Phi$.

Next suppose $\Phi = A_n$, so that $G=SL_{n+1}$. We may assume that $n \geq 3$, since $\opH^2(\fu,L(\wtalpha)) = 0$ if $\Phi$ is of type $A_1$, and since all of the fundamental weights and dominant roots $A_2$ are excluded from consideration when $q = 5$ by the assumptions on $\lambda$. Let $V$ be the natural $(n+1)$-dimensional representation of $SL_{n+1}$. Then for $1 \leq j \leq n$, $L(\omega_j)  = V(\omega_j) \cong \Lambda^j(V)$, the $j$-th exterior power of $V$. From this explicit description of $L(\omega_j)$, one can explicitly write down the weights $-w \cdot 0 + \nu$ for $w \in W$ with $\ell(w)=2$ and $\nu$ a weight of $L(\omega_j)$, and check for $n \geq 3$ that $-w \cdot 0 + \nu \notin 4X(T)$.

The cases $\Phi = B_n,D_n$ are handled similarly to type $A_n$. For example, for $\Phi = D_n$, let $V$ be the natural $2n$-dimensional representation for $SO_{2n}$. Then for for $1 \leq j \leq n-2$ one has $L(\omega_j) = V(\omega_j) \cong \Lambda^j(V)$ \cite[II.8.21]{Jantzen:2003}, and one can then check for each weight $\nu$ of $L(\omega_j)$ and for each $w \in W$ with $\ell(w) = 2$ that $-w \cdot 0 + \nu \in 4X(T)$ only if $-w \cdot 0 + \nu = 0$. In particular, this holds if $\nu = 0$, since $0$ is a weight of $\Lambda^2(V)$, so the condition $\opH^2(\fu,L(\lambda))^{\Tfq} = \opH^2(\fu,L(\lambda))^T$ also holds if $\lambda = 0$. For $j \in \set{n-1,n}$, the weight $\omega_j$ is minuscule, hence not in the root lattice. Consequently, if $\nu$ is a weight of $L(\omega_j)$, then $\nu \notin \Z\Phi$, and  $-w \cdot 0 + \nu \notin 4X(T) \subseteq \Z\Phi$. The details for $\Phi = B_n$ are similar, so are left to the reader.
\end{proof}

\begin{remark} \label{remark:nonzeroTfpweight}
Retain the assumptions on $p$ and $q$ made in Lemma \ref{lemma:Ext2U1Tfqinvariants}. Suppose $q = p$, and let $\lambda \in \set{\alpha_0,\wtalpha}$. Assume also that $p > 5$ if $\Phi$ is of type $A_3$, $B_2$, or $C_n$. Then one can argue as in the proof of the lemma to show that the condition $\opH^2(\fu,L(\lambda))^{\Tfp} = \opH^2(\fu,L(\lambda))^T$ holds in this case as well. The assumption $p > 5$ is necessary for types $A_3$, $B_2$, and $C_n$, since for these types one has the identities
\begin{align*}
-s_{\alpha_2} s_{\alpha_1} \cdot 0 + (-\alpha_3) &= 4\omega_2 - 4\omega_3 & \Phi = A_3, \\
-s_{\alpha_2} s_{\alpha_3} \cdot 0 + (-\alpha_1) &= 4 \omega_2 -4\omega_1 & \Phi = A_3, \\
-s_{\alpha_1} s_{\alpha_2} \cdot 0 + (-\alpha_2) &= 4 \omega_1 - 4 \omega_2 & \Phi = B_2,\\
-s_{\alpha_{n-1}} s_{\alpha_n} \cdot 0 + (\alpha_{n-1} + \alpha_n) &= 4\omega_{n-1} - 4 \omega_{n-2} & \Phi = C_n.
\end{align*}
On the other hand, the nonzero $\Tfp$-invariant weight $(p-1)\lambda$ occurs with multiplicity one in the $E_2^{2,0}$-term of \eqref{eq:U1Liealgspecseq}, and the proof of the lemma shows that this is the only nonzero $\Tfp$-invariant weight in $E_2^{2,0}$. Observe that $E_2^{0,1} = \opH^1(\fu,L(\lambda))$ is a $T$-module subquotient of $\opH^1(\fu,k) \otimes L(\lambda)$, hence has weights of the form $\alpha + \nu$ for $\alpha \in \Delta$ and $\nu$ a weight of $L(\lambda)$. In particular, $\nu \leq \lambda$. Then $(p-1)\lambda$ occurs as a weight of $E_2^{0,1}$ only if $(p-1)\lambda \leq \alpha + \lambda$ for some $\alpha \in \Delta$. Equivalently, $(p-2)\lambda \leq \alpha$. Since $\lambda \in \set{\alpha_0,\wtalpha}$ this condition is absurd, so $(p-1)\lambda$ is not a weight of $E_2^{0,1}$. Thus, if $\lambda \in \set{\alpha_0,\wtalpha}$, and if $p$ satisfies the conditions stated above, it follows that $\dim \Ext_{U_1}^2(k,L(\lambda))_{(p-1)\lambda} = 1$.

Now suppose $\Phi = A_2$, $p = q = 5$, and $\lambda \in \set{\omega_1,\omega_2}$. Arguing as in the proof of the lemma, one can check in this case that the only solutions to $-w \cdot 0 + \nu \in 4X(T)$ with $w \in W$, $\ell(w) = 2$, and $\nu$ a weight of $L(\lambda)$ are $-s_{\alpha_1} s_{\alpha_2} \cdot 0 + \omega_1 = 4\omega_1$ (if $\lambda = \omega_1$), and $-s_{\alpha_2} s_{\alpha_1} \cdot 0 + \omega_2 = 4\omega_2$ (if $\lambda = \omega_2$). It then follows that $\opH^2(\fu,L(\lambda))^{\Tfp}$ is at most one-dimensional, and if nonzero, is spanned by a vector of weight $4\lambda$. Kostant's Theorem (cf.\ \cite[Theorem 4.2.1]{UGA:2009} and the footnote in \cite[\S6.1]{UGA:2011}) apply to show that the weight $4\lambda$ does in fact occur in $E_2^{0,2} = \opH^2(\fu,L(\lambda))$. One can then analyze the weights of the low degree terms in \eqref{eq:U1Liealgspecseq}, again using Kostant's Theorem to determine the weights of $\opH^1(\fu,L(\lambda))$, to see that the $4\lambda$-weight space of $E_\infty^{0,2}$ is nonzero. The proof of the lemma also shows in this case that $(E_2^{2,0})^{\Tfp} = (E_2^{2,0})^T$, so it follows that $\dim \Ext_{U_1}^2(k,L(\lambda))_{(p-1)\lambda} = 1$, and that $(p-1)\lambda$ is the only nonzero $\Tfp$-invariant weight in $\Ext_{U_1}^2(k,L(\lambda))$.
\end{remark}

\subsection{Weight spaces in second cohomology for \texorpdfstring{$U_r$, $r \geq 2$}{Ur, r > 1}}

We now analyze the space of $\Tfq$-invariants in $\Ext_{U_r}^2(k,L(\lambda))$ for $r \geq 2$.

\begin{lemma} \label{lemma:Ext2UrTfqinvariants}
Suppose $\lambda \in X(T)_+$ is a dominant root or is less than or equal to a fundamental weight. Assume $p > 5$ if $\Phi$ is of type $E_8$ or $G_2$, and that $p > 3$ otherwise. Set $q = p^r$ with $r \geq 2$. Then $\Ext_{U_r}^2(k,L(\lambda))^{\Tfq} = \Ext_{U_r}^2(k,L(\lambda))^T$ provided $\lambda$ is not one of the weights listed in Table \ref{table:problemweights}.
\end{lemma}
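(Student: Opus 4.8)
The plan is to argue by induction on $r$, analysing the Lyndon--Hochschild--Serre spectral sequence attached to the normal infinitesimal subgroup scheme $U_1$ of $U_r$. Recall from Remark \ref{remark:twistedschemes} that $U_r/U_1$ is isomorphic to $U_{r-1}$ carrying the conjugation action of $B$ twisted by the Frobenius $F$, so that $\opH^n(U_r/U_1,-) \cong \opH^n(U_{r-1},(-)^{(-1)})^{(1)}$. The spectral sequence
\[
E_2^{i,j} = \opH^i(U_r/U_1,\opH^j(U_1,L(\lambda))) \Rightarrow \opH^{i+j}(U_r,L(\lambda))
\]
exhibits $\opH^2(U_r,L(\lambda))$ as a $T$-module subquotient of $E_2^{0,2}\oplus E_2^{1,1}\oplus E_2^{2,0}$, so it suffices to show that in each of these three terms a weight of the form $(p^r-1)\zeta$ forces $\zeta=0$.

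First I would record the $T$-weights of the three terms. Since $p>3$ gives $(\lambda,\gamma^\vee)\le 3<p$ for all $\gamma\in\Delta$, one has $\lambda\in X_1(T)$, so $\opH^0(U_1,L(\lambda))=L(\lambda)^{U_1}=L(\lambda)^{\fu}$ is one-dimensional of weight $w_0\lambda=-\lambda^*$; hence $E_2^{2,0}\cong\opH^2(U_{r-1},k)^{(1)}\otimes(-\lambda^*)$. The weights of $\opH^1(U_1,L(\lambda))$ have the form $\beta+\nu$ with $\beta\in\Delta$ and $\nu$ a weight of $L(\lambda)$ by Lemma \ref{lemma:weightsofH1Ur}; applying $\opH^1(U_r/U_1,-)$ and using that its coefficient module has a $B$-stable filtration by one-dimensional $U_r/U_1$-trivial modules (the argument of \cite[\S 2.5]{UGA:2009}) shows the weights of $E_2^{1,1}$ have the form $p^j\gamma+\beta+\nu$ with $\gamma,\beta\in\Delta$, $1\le j\le r-1$, and $\nu$ a weight of $L(\lambda)$. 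By the spectral sequence \eqref{eq:U1Liealgspecseq} together with \cite[Theorem 4.4]{Bendel:2007}, the weights of $\opH^2(U_1,L(\lambda))$, hence of $E_2^{0,2}$, have the form $p\beta-\lambda^*$ with $\beta\in\Phi^+$, or a sum of two positive roots plus a weight of $L(\lambda)$. Finally, induction on $r$ (base case $r=1$, where $\opH^2(U_1,k)$ is a subquotient of $(\fu^*)^{(1)}\oplus\opH^2(\fu,k)$) shows that every weight of $\opH^2(U_r,k)$ is either $\pm p^a\theta$ with $\theta\in\Phi^+$ and $1\le a\le r$, or $\pm p^{a_1}\theta_1\pm p^{a_2}\theta_2$ with $\theta_i\in\Phi^+$ and $0\le a_i\le r-1$; twisting and tensoring with $-\lambda^*$, the weights of $E_2^{2,0}$ have the form $\pm p^a\theta-\lambda^*$ with $\theta\in\Phi^+$ and $2\le a\le r$, or $\pm p^{a_1}\theta_1\pm p^{a_2}\theta_2-\lambda^*$ with $1\le a_i\le r-1$.

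With the weights in hand I would conclude by the height estimate used throughout Section \ref{section:Urcohomology}. For any root $\beta$ and any $\gamma\in\Delta$ one has $\abs{(\beta,\gamma^\vee)}\le 3$, with $3$ occurring only in type $G_2$, and $\abs{(\nu,\gamma^\vee)}\le(\lambda^*,\gamma^\vee)\le 3$ for every weight $\nu$ of $L(\lambda)$ once $\nu$ is replaced by a dominant $W$-conjugate (harmless, since the weights of $L(\lambda)$ are $W$-stable). If $\mu$ is a weight of one of the $E_2$-terms in which no power of $p$ beyond $p^{r-1}$ occurs, these bounds give $\abs{(\mu,\gamma^\vee)}<p^r-1$ for all $\gamma\in\Delta$ under the stated hypotheses on $p$ and the assumption $r\ge 2$, so $\mu=(p^r-1)\zeta$ forces $\zeta=0$. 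The only weights in which the top power $p^r$ can appear are the weights $\pm p^r\theta-\lambda^*$ of $E_2^{2,0}$ with $\theta\in\Phi^+$; for such a weight, $\ve p^r\theta-\lambda^*=(p^r-1)\zeta$ rewrites as $\ve\theta-\lambda^*=(p^r-1)(\zeta-\ve\theta)$ for a sign $\ve\in\{\pm1\}$, and since $\abs{(\ve\theta-\lambda^*,\gamma^\vee)}\le 6<p^r-1$ for every $\gamma\in\Delta$ this forces $\zeta=\ve\theta$ and $\lambda^*=\ve\theta$. As $\lambda^*$ is dominant this can happen only if $\lambda^*$, hence $\lambda$, is a dominant root -- precisely one of the weights excluded by hypothesis via Table \ref{table:problemweights}. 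Thus $\zeta=0$ in every case, which is the assertion of the lemma.

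I expect the main obstacle to be the inductive bookkeeping of the $T$-weights of $\opH^2(U_r,k)$, and in particular the point that the top power $p^r$ can be carried only by a single positive root (arising from the Frobenius-twisted linear part $(\fu^*)^{(1)}$ of $\opH^2(U_1,k)$) and that, inside $\opH^2(U_r,L(\lambda))$, it is paired only with the one-dimensional $U_1$-invariant weight $-\lambda^*$ and not with an arbitrary weight of $L(\lambda)$. This refinement is what handles cases such as $\Phi$ of type $B_n$ with $\lambda=\omega_j$, $j\ge 3$, where the cruder estimate ``the weights of $\opH^2(U_r,L(\lambda))$ lie among the weights of $\opH^2(U_r,k)\otimes L(\lambda)$'' would leave the spurious $\Tfq$-fixed weight $(p^r-1)\beta$ unaccounted for.
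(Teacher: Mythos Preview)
Your argument is correct and follows essentially the same route as the paper: both use the LHS spectral sequence for $U_1 \triangleleft U_r$, reduce to the three terms $E_2^{0,2}$, $E_2^{1,1}$, $E_2^{2,0}$, and isolate the weight $p^r\theta - \lambda^*$ in $E_2^{2,0}$ as the only possible source of a nonzero $(p^r-1)$-multiple, which then forces $\lambda$ to be a dominant root. The differences are in packaging rather than substance: for $E_2^{0,2}$ the paper simply quotes Lemma~\ref{lemma:Ext2U1Tfqinvariants} (whose $q>5$ hypotheses are automatic once $r\ge 2$), for $E_2^{1,1}$ the paper invokes the semisimplicity Theorem~\ref{theorem:semisimplicity} to get the explicit weights $-s_\alpha\cdot\lambda^*$ and $-\sigma$ rather than your coarser set $\beta+\nu$, and for $E_2^{2,0}$ the paper cites \cite[I.9.14]{Jantzen:2003} directly in place of your induction; your bounds are slightly looser but still suffice under the stated hypotheses on $p$ and $r$. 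One small slip: the inequality $\abs{(\nu,\gamma^\vee)}\le(\lambda^*,\gamma^\vee)$ is not literally true for a dominant $\nu\le\lambda$, but the conclusion $\abs{(\nu,\gamma^\vee)}\le 3$ that you actually use is correct, since every dominant weight below $\lambda$ is again of the same restricted shape.
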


\begin{proof}
Consider the LHS spectral sequence for $U_r$ and its normal subgroup scheme $U_1$:
\begin{equation} \label{eq:LHSforUrTfqinvariants}
E_2^{i,j} = \Ext_{U_r/U_1}^i(k,\Ext_{U_1}^j(k,L(\lambda))) \Rightarrow \Ext_{U_r}^{i+j}(k,L(\lambda)).
\end{equation}
To prove the lemma, it suffices to show $(E_2^{0,2} \oplus E_2^{1,1} \oplus E_2^{2,0})^{\Tfq} = (E_2^{0,2} \oplus E_2^{1,1} \oplus E_2^{2,0})^T$. First, $E_2^{0,2} \cong \Ext_{U_1}^2(k,L(\lambda))^{U_r/U_1}$, so Lemma \ref{lemma:Ext2U1Tfqinvariants} implies that $(E_2^{0,2})^{\Tfq} = (E_2^{0,2})^T$. Next, $\Ext_{U_1}^1(k,L(\lambda))$ is semisimple as a $B/U_1$-module by Theorem \ref{theorem:semisimplicity}, hence trivial as a $U_r/U_1$-module, so there exists a $B$-module isomorphism
\[
E_2^{1,1} \cong \Ext_{U_r/U_1}^1(k,k) \otimes \Ext_{U_1}^1(k,L(\lambda)) \cong \opH^1(U_{r-1},k)^{(1)} \otimes \Ext_{U_1}^1(k,L(\lambda))
\]
(cf.\ Remark \ref{remark:twistedschemes}). Then by Lemma \ref{lemma:weightsofH1Ur} and Theorem \ref{theorem:semisimplicity}, each weight of $E_2^{1,1}$ can be written as either $p^i\beta - s_\alpha \cdot \lambda^*$, $p^i \beta - (\lambda^* - p^n \alpha)$, or $p^i \beta - \sigma$ for some $\alpha,\beta \in \Delta$, some integers $1 \leq i,n < r$, and some $\sigma \in X(T)_+$ with $\sigma < \lambda^*$. First consider a weight of the form $p^i \beta - s_\alpha \cdot \lambda^*$. Given $\gamma \in \Delta$, one has $\abs{p^i\beta - s_\alpha \cdot \lambda^*,\gamma^\vee)} \leq p^i \abs{(\beta,\gamma^\vee)} + \abs{(s_\alpha \cdot \lambda^*,\gamma^\vee)}$. Then one can verify as in the proof of Lemma \ref{lemma:Ext1weights} the following inequalities:
\begin{align*}
\abs{(p^i\beta - s_\alpha \cdot \lambda^*,\gamma^\vee)} &\leq 2p^{r-1} + 4 \quad \text{if $\Phi \in \set{D_n,E_6}$,} \\
\abs{(p^i\beta - s_\alpha \cdot \lambda^*,\gamma^\vee)} &\leq 2p^{r-1} + 6 \quad \text{if $\Phi \in \set{A_n,B_n,C_n,E_7,F_4}$,} \\
\abs{(p^i\beta - s_\alpha \cdot \lambda^*,\gamma^\vee)} &\leq 2p^{r-1} + 8 \quad \text{if $\Phi = E_8$, and} \\
\abs{(p^i\beta - s_\alpha \cdot \lambda^*,\gamma^\vee)} &\leq 3p^{r-1} + 6 \quad \text{if $\Phi = G_2$.}
\end{align*}
Since $p > 3$, one has $\abs{(p^i\beta - s_\alpha \cdot \lambda^*,\gamma^\vee)} < p^r-1$, so it follows that $p^i \beta - s_\alpha \cdot \lambda^* \in (p^r-1)X(T)$ only if $p^i \beta - s_\alpha \cdot \lambda^* =0$. A similar analysis shows that the weights of the forms $p^i\beta - (\lambda^* - p^n\alpha)$ and $p^i\beta - \sigma$ are elements of $(p^r-1)X(T)$ only if they are zero, so it follows that $(E_2^{1,1})^{\Tfq} = (E_2^{1,1})^T$.

Finally, consider the weights of
\[
E_2^{2,0} = \Ext_{U_r/U_1}^2(k,\Hom_{U_1}(k,L(\lambda))) \cong \opH^2(U_{r-1},k)^{(1)} \otimes w_0\lambda.
\]
It follows from \cite[I.9.14]{Jantzen:2003} that each weight of $E_2^{2,0}$ can be written in one of the following forms:
\begin{align*}
& p^a \alpha + p^b  \beta - \lambda^* && \alpha,\beta \in \Phi^+, 1 \leq a < b \leq r-1 \text{ (if $r \geq 3$),} \\
& p^e(\alpha + \beta) - \lambda^* && \alpha,\beta \in \Phi^+, 1 \leq e \leq r-1, \text{ or} \\
& p^c \alpha - \lambda^* && \alpha \in \Phi^+, 2 \leq c \leq r.
\end{align*}
Again, an elementary case-by-case analysis like that conducted above for the $E_2^{1,1}$-term shows that any weight of the form $p^a\alpha + p^b \beta - \lambda^*$ or $p^e(\alpha+\beta) - \lambda^*$ is an element of $(p^r-1)X(T)$ only if it is the zero weight. An elementary analysis like that performed in the proof of Lemma \ref{lemma:Ext2U1Tfqinvariants} for the $E_2^{2,0}$-term of the spectral sequence \eqref{eq:U1Liealgspecseq} also shows that a weight of the form $p^c \alpha - \lambda^*$ is an element of $(p^r-1)X(T)$ only if $c = r$ and $\lambda = \alpha^*$ is a dominant root. Thus, if $\lambda$ is not one of the weights listed in Table \ref{table:problemweights}, then $(E_2^{2,0})^{\Tfq} = (E_2^{2,0})^T$.
\end{proof}

\begin{remark} \label{remark:nonzeroTfqweight}
Retain the assumptions on $p$, $q$, and $r$ from the lemma, and let $\lambda$ be a dominant root. Then the proof of the lemma shows that the only possible nonzero $\Tfq$-invariant weight in $E_2^{2,0}$, hence the only possible nonzero $\Tfq$-invariant weight in $\Ext_{U_r}^2(k,L(\lambda))$, is $(p^r-1)\lambda$. Since $\Ext_{U_1}^1(k,L(\lambda))^{\Tfq} = \Ext_{U_1}^1(k,L(\lambda))^T$ by Lemma \ref{lemma:Ext1weights}, it follows from considering the low degree terms of the spectral sequence \eqref{eq:LHSforUrTfqinvariants} that
\[
\dim \Ext_{U_r}^2(k,L(\lambda))_{(p^r-1)\lambda} = \dim \, (E_2^{2,0})_{(p^r-1)\lambda}.
\]
This common dimension is also the dimension of the $p^r\lambda$-weight space in $\opH^2(U_{r-1},k)^{(1)}$, or equivalently, that of the $p^{r-1}\lambda$-weight space in $\opH^2(U_{r-1},k)$. Since $\opH^2(B_{r-1},k) = \opH^2(U_{r-1},k)^{T_{r-1}}$, we conclude from \cite[Proposition 5.4 and Theorem 5.3]{Bendel:2007} that $\dim \Ext_{U_r}^2(k,L(\lambda))_{(p^r-1)\lambda} = 1$, and this is the only nonzero $\Tfq$-invariant weight in $\Ext_{U_r}^2(k,L(\lambda))$.
\end{remark}

\subsection{An isomorphism for second cohomology} \label{subsection:secondcohomology}

The weight calculations of the previous section enable us to complete our program for understanding the restriction map $\opH^2(G,L(\lambda)) \rightarrow \opH^2(\Gfq,L(\lambda))$.

\begin{theorem} \label{theorem:H2Gfpiso}
Let $\lambda \in X(T)_+$ with $\lambda \leq \omega_j$ for some $1 \leq j \leq n$. Assume that $p > 3$, and that $p > 5$ if $\Phi$ is of type $E_7$, $E_8$, $F_4$, or $G_2$, or if $\Phi$ is of type $C_n$ and $\lambda \in \Z\Phi$ (i.e., if $\lambda$ is the zero weight or is a fundamental weight indexed by an even integer). Assume that $\lambda$ is not one of the weights listed in Table \ref{table:problemweights}. Then the restriction map
\[
\opH^2(G,L(\lambda)) \rightarrow \opH^2(\Gfp,L(\lambda))
\]
is an isomorphism.
\end{theorem}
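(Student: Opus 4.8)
The plan is to check that, in the case $r = 1$, $q = p$, the three hypotheses of Theorem~\ref{theorem:Ext2Gfqiso} hold for $M = L(\lambda)$, and then to invoke that theorem directly.

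Since $\lambda \leq \omega_j$, the weight $\lambda$, and hence also its dual $\lambda^{*} = -w_0\lambda$, is less than or equal to a fundamental dominant weight. The hypotheses on $p$ imposed in the present theorem subsume those of Theorem~\ref{theorem:semisimplicity} (which asks only that $p > 5$ in types $E_8$, $G_2$ and $p > 3$ otherwise), so Theorem~\ref{theorem:semisimplicity} applied to $\lambda^{*}$, together with the $B/U_1$-module isomorphism $\Ext_{U_1}^1(k,L(\lambda)) \cong \Ext_{U_1}^1(L(\lambda^{*}),k)$, shows that $\Ext_{U_1}^1(k,L(\lambda))$ is semisimple as a $B/U_1$-module. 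This is the first hypothesis of Theorem~\ref{theorem:Ext2Gfqiso}.

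Next, I would apply Lemma~\ref{lemma:Ext1weights} --- whose hypotheses are likewise implied by ours --- which gives both $\Ext_{U_1}^1(k,L(\lambda))^{\Tfp} = \Ext_{U_1}^1(k,L(\lambda))^{T}$ and the numerical bound $p > \max\{-(\nu,\gamma^\vee) : \gamma \in \Delta,\ \nu \in X(T),\ \Ext_{U_1}^1(k,L(\lambda))_\nu \neq 0\}$. It then remains only to establish the equality $\Ext_{U_1}^2(k,L(\lambda))^{\Tfp} = \Ext_{U_1}^2(k,L(\lambda))^{T}$, and this is precisely the content of Lemma~\ref{lemma:Ext2U1Tfqinvariants}: with $q = p$, the conditions ``$q > 5$'' appearing in that lemma specialize to ``$p > 5$'' in types $E_7, E_8, F_4, G_2$ and in type $C_n$ when $\lambda \in \Z\Phi$, which are exactly the restrictions we have assumed, and the exclusion of the weights listed in Table~\ref{table:problemweights} is part of our hypotheses. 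With all three hypotheses of Theorem~\ref{theorem:Ext2Gfqiso} verified for $r = 1$, that theorem yields that $\opH^2(G,L(\lambda)) \to \opH^2(\Gfp,L(\lambda))$ is an isomorphism (and, as a bonus, that $\opH^3(G,L(\lambda)) \to \opH^3(\Gfp,L(\lambda))$ is injective).

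The argument is thus essentially a matter of matching hypotheses, and I do not expect any serious obstacle. The only points that will require care are the bookkeeping of the $q = p$ specialization of Lemma~\ref{lemma:Ext2U1Tfqinvariants} (several of whose conditions are phrased in terms of $q$ rather than $p$), and confirming that discarding the weights of Table~\ref{table:problemweights} removes exactly the cases --- the highest long and short roots, and the type $A_2$ fundamental weights when $q = 5$ --- in which the $\Tfp$-invariants of $\Ext_{U_1}^2(k,L(\lambda))$ could fail to coincide with the $T$-invariants.
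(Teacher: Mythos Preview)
Your proposal is correct and follows essentially the same route as the paper: the paper's proof reads ``This follows from Theorem~\ref{theorem:Ext2Gfqiso} by applying Theorem~\ref{theorem:semisimplicity} and Lemmas~\ref{lemma:Ext1weights} and~\ref{lemma:Ext2U1Tfqinvariants},'' which is precisely the hypothesis-matching you carry out. Your additional remarks on the $q = p$ specialization of Lemma~\ref{lemma:Ext2U1Tfqinvariants} and on passing from $\lambda$ to $\lambda^{*}$ are accurate and simply make explicit what the paper leaves implicit.
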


\begin{proof}
This follows from Theorem \ref{theorem:Ext2Gfqiso} by applying Theorem \ref{theorem:semisimplicity} and Lemmas \ref{lemma:Ext1weights} and \ref{lemma:Ext2U1Tfqinvariants}.
\end{proof}

We obtain a slightly better result when $r>1$.

\begin{theorem} \label{theorem:H2Gfqiso}
Let $\lambda \in X(T)_+$ with $\lambda \leq \omega_j$ for some $1 \leq j \leq n$. Assume that $p > 5$ if $\Phi$ is of type $E_8$ or $G_2$, and $p > 3$ otherwise. Set $q = p^r$ with $r \geq 2$, and assume that $\lambda$ is not one of the weights listed in Table \ref{table:problemweights}. Then the restriction map
\[
\opH^2(G,L(\lambda)) \rightarrow \opH^2(\Gfq,L(\lambda))
\]
is an isomorphism.
\end{theorem}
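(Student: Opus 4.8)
The plan is to derive the theorem directly from Theorem~\ref{theorem:Ext2Gfqiso}, so the work consists in verifying that the four hypotheses of that theorem hold for the given $\lambda$. First I would note that, since $\lambda \leq \omega_j$ and $p > 3$, inspection of the list of such weights (see Table~\ref{table:problemweights} and \cite[\S 7.1]{UGA:2011}) gives $(\lambda,\alpha^\vee) \leq 3 < p$ for every $\alpha \in \Delta$, so $\lambda \in X_1(T) \subseteq X_r(T)$; thus $\lambda$ also lies in the range of weights covered by Theorem~\ref{theorem:semisimplicity} and Lemmas~\ref{lemma:Ext1weights} and~\ref{lemma:Ext2UrTfqinvariants}.

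Next I would assemble the remaining hypotheses. Since $\lambda^* := -w_0\lambda$ is again a dominant root or is less than or equal to a fundamental weight, Theorem~\ref{theorem:semisimplicity} shows that $\Ext_{U_r}^1(L(\lambda^*),k)$ is semisimple as a $B/U_r$-module; via the duality $\Ext_{U_r}^1(k,L(\lambda)) \cong \Ext_{U_r}^1(L(\lambda^*),k)$ of $B/U_r$-modules (as in the proof of Lemma~\ref{lemma:Ext1weights}), this yields the semisimplicity of $\Ext_{U_r}^1(k,L(\lambda))$. Lemma~\ref{lemma:Ext1weights} then supplies both the equality $\Ext_{U_r}^1(k,L(\lambda))^{\Tfq} = \Ext_{U_r}^1(k,L(\lambda))^T$ and the numerical bound $p^r > \max\{-(\nu,\gamma^\vee): \gamma \in \Delta, \nu \in X(T), \Ext_{U_r}^1(k,L(\lambda))_\nu \neq 0\}$. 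Finally, the last hypothesis $\Ext_{U_r}^2(k,L(\lambda))^{\Tfq} = \Ext_{U_r}^2(k,L(\lambda))^T$ is precisely the conclusion of Lemma~\ref{lemma:Ext2UrTfqinvariants}, which is available here because $r \geq 2$ and because $\lambda$ is assumed not to be one of the weights in Table~\ref{table:problemweights}. With all four hypotheses verified, Theorem~\ref{theorem:Ext2Gfqiso} gives that $\opH^2(G,L(\lambda)) \rightarrow \opH^2(\Gfq,L(\lambda))$ is an isomorphism.

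All the genuine content has been pushed into Theorem~\ref{theorem:semisimplicity} and the weight-counting Lemmas~\ref{lemma:Ext1weights} and~\ref{lemma:Ext2UrTfqinvariants}, so there is no real obstacle here beyond bookkeeping; the only point requiring a moment's care is the reduction step of identifying $\Ext_{U_r}^1(k,L(\lambda))$ with $\Ext_{U_r}^1(L(\lambda^*),k)$ so that Theorem~\ref{theorem:semisimplicity}, stated for the second module, applies to the first. It is also worth remarking that the restriction $r \geq 2$ enters only so that one may invoke Lemma~\ref{lemma:Ext2UrTfqinvariants} in place of the more delicate Lemma~\ref{lemma:Ext2U1Tfqinvariants} needed for $r=1$, which is why the present theorem carries fewer side conditions than Theorem~\ref{theorem:H2Gfpiso}.
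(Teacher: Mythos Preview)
Your proposal is correct and follows essentially the same approach as the paper: the paper's proof simply states that the theorem follows from Theorem~\ref{theorem:Ext2Gfqiso} by applying Theorem~\ref{theorem:semisimplicity} and Lemmas~\ref{lemma:Ext1weights} and~\ref{lemma:Ext2UrTfqinvariants}. You have supplied exactly this argument with the bookkeeping details (such as checking $\lambda \in X_r(T)$ and invoking the duality $\Ext_{U_r}^1(k,L(\lambda)) \cong \Ext_{U_r}^1(L(\lambda^*),k)$) made explicit, and your remark about why $r \geq 2$ relaxes the side conditions relative to Theorem~\ref{theorem:H2Gfpiso} is accurate.
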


\begin{proof}
This follows from Theorem \ref{theorem:Ext2Gfqiso} by applying Theorem \ref{theorem:semisimplicity} and Lemmas \ref{lemma:Ext1weights} and \ref{lemma:Ext2UrTfqinvariants}.
\end{proof}

\subsection{Proof of Theorem \ref{theorem:Ext2restrictioniso}} \label{subsection:proofofExt2restrictioniso}

We can now prove Theorem \ref{theorem:Ext2restrictioniso}.

\begin{proof}[Proof of Theorem \ref{theorem:Ext2restrictioniso}]
The result now follows from Theorems \ref{theorem:H2Gfpiso} and \ref{theorem:H2Gfqiso}.
\end{proof}

\subsection{Vanishing results} \label{subsection:vanishingresults}

Let $\lambda \in X(T)_+$ with $\lambda \leq \omega_j$ for some $1 \leq j \leq n$, and assume that $\lambda$ is not one of the weights listed in Table \ref{table:problemweights}. Then under the conditions of Table \ref{table:pandq}, Theorems \ref{theorem:H1Gfqiso}, \ref{theorem:H2Gfpiso}, and \ref{theorem:H2Gfqiso} reduce the problems of computing the cohomology groups $\opH^1(\Gfq,L(\lambda))$ and $\opH^2(\Gfq,L(\lambda))$ for $\Gfq$ to those of computing the corresponding rational cohomology groups for the full algebraic group $G$, where results are typically easier to obtain. The first cohomology group $\opH^1(G,L(\lambda))$ has been treated already in \cite[\S\S 4--6]{UGA:2011}, so we concentrate now $\opH^2(G,L(\lambda))$.

\begin{theorem} \label{theorem:H=Vornotlinked}
Let $\lambda \in X(T)_+$ with $\lambda \leq \omega_j$ for some $1 \leq j \leq n$. Suppose $p$ and $q$ satisfy the conditions in Table \ref{table:pandq}, and $\lambda$ is not one of the weights listed in Table \ref{table:problemweights}. Then $\opH^2(\Gfq,L(\lambda)) = 0$ if either $L(\lambda) = H^0(\lambda)$, or if $\lambda$ is not linked to zero under the dot action of the affine Weyl group. 
\end{theorem}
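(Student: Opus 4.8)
The plan is to reduce the vanishing of $\opH^2(\Gfq,L(\lambda))$ to a statement about rational cohomology of $G$, using the isomorphism already established in Theorems \ref{theorem:H2Gfpiso} and \ref{theorem:H2Gfqiso} (equivalently, Theorem \ref{theorem:Ext2restrictioniso}). Indeed, under the stated hypotheses on $p$, $q$, and $\lambda$, the restriction map $\opH^2(G,L(\lambda)) \rightarrow \opH^2(\Gfq,L(\lambda))$ is an isomorphism, so it suffices to prove that the rational cohomology group $\opH^2(G,L(\lambda)) = \Ext_G^2(k,L(\lambda))$ vanishes in each of the two asserted cases. Note first that the weight $\lambda$ here is covered by Table \ref{table:pandq} and excluded from Table \ref{table:problemweights}, so the hypotheses of Theorem \ref{theorem:Ext2restrictioniso} are met and this reduction is legitimate.

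For the first case, suppose $L(\lambda) = H^0(\lambda)$. Then $\Ext_G^2(k,L(\lambda)) = \Ext_G^2(V(0),H^0(\lambda))$. Since $V(0) = H^0(0) = k$ has a good filtration (it is simply the trivial module) and $H^0(\lambda)$ has a good filtration, one has $\Ext_G^i(V(0),H^0(\lambda)) = 0$ for all $i > 0$ by \cite[II.4.13]{Jantzen:2003}. In particular $\opH^2(G,L(\lambda)) = 0$, and the isomorphism with $\opH^2(\Gfq,L(\lambda))$ finishes this case.

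For the second case, suppose $\lambda$ is not linked to $0$ under the dot action of the affine Weyl group $W_p$. By the linkage principle \cite[II.6.17]{Jantzen:2003}, $\Ext_G^n(k, L(\lambda)) = 0$ for all $n \geq 0$ whenever $0$ and $\lambda$ lie in distinct linkage classes (equivalently, distinct blocks), since the trivial module $k = L(0)$ lies in the principal block while $L(\lambda)$ lies in a different block. Taking $n = 2$ gives $\opH^2(G,L(\lambda)) = 0$, and again the isomorphism of Theorem \ref{theorem:Ext2restrictioniso} yields $\opH^2(\Gfq,L(\lambda)) = 0$.

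There is no serious obstacle here: the entire content of the theorem is the translation from finite to algebraic cohomology provided by the earlier isomorphism theorem, after which both cases are immediate consequences of standard good-filtration and linkage arguments. The only point requiring a little care is verifying that the hypotheses on $p$, $q$, and $\lambda$ genuinely match those of Theorem \ref{theorem:Ext2restrictioniso}, including the type-$C_n$ caveat built into Table \ref{table:pandq} and the exclusion of the weights in Table \ref{table:problemweights}; once that bookkeeping is confirmed, the proof is complete.
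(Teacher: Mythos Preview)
Your proof is correct and follows essentially the same approach as the paper: reduce to rational cohomology via Theorem \ref{theorem:Ext2restrictioniso}, then invoke \cite[II.4.13]{Jantzen:2003} for the case $L(\lambda)=H^0(\lambda)$ and the Linkage Principle for the unlinked case. The paper's proof is simply a one-line version of what you wrote; the only minor quibble is that the vanishing $\Ext_G^i(V(0),H^0(\lambda))=0$ in \cite[II.4.13]{Jantzen:2003} is a direct statement about Weyl--induced pairings rather than a good-filtration argument, but your citation and conclusion are correct.
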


\begin{proof}
The result follows from Theorem \ref{theorem:Ext2restrictioniso} by \cite[II.4.13]{Jantzen:2003} and the Linkage Principle.
\end{proof}

\begin{corollary} \label{corollary:H2vanishingclassicalandminuscule}
Let $\lambda \in X(T)_+$ with $\lambda \leq \omega_j$ for some $1 \leq j \leq n$. Suppose $p$ and $q$ satisfy the conditions listed in Table \ref{table:pandq}, and $\lambda$ is not one of the weights listed in Table \ref{table:problemweights}. If $\Phi$ is of type $C_n$, assume that $\lambda = \omega_j$ with $j$ odd, and if $\Phi$ is of exceptional type, assume that $\lambda$ is a minuscule dominant weight. Then $\opH^2(\Gfq,L(\lambda)) \cong \opH^2(G,L(\lambda)) = 0$.
\end{corollary}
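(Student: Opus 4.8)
The plan is to deduce the statement from Theorems~\ref{theorem:Ext2restrictioniso} and~\ref{theorem:H=Vornotlinked}, the substance being a case-by-case check that in each case \emph{either} $L(\lambda)=H^0(\lambda)$ \emph{or} $\lambda$ is not linked to $0$ under the dot action of the affine Weyl group $W_p = W\ltimes p\Z\Phi$. All the hypotheses of those two theorems are present in the statement; note that when $\Phi$ is of type $C_n$ and $\lambda=\omega_j$ with $j$ odd one has $\omega_j\notin\Z\Phi$, so the clause ``$q>5$ if $\lambda\in\Z\Phi$'' of Table~\ref{table:pandq} is vacuous and only $p>3$ is being assumed. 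By Theorem~\ref{theorem:Ext2restrictioniso} the restriction map $\opH^2(G,L(\lambda))\to\opH^2(\Gfq,L(\lambda))$ is an isomorphism, so it suffices to show $\opH^2(G,L(\lambda))=0$; and by Theorem~\ref{theorem:H=Vornotlinked} this follows once we have the dichotomy above---indeed, in the first case $\opH^2(G,L(\lambda))=\Ext_G^2(V(0),H^0(\lambda))=0$ by \cite[II.4.13]{Jantzen:2003}, and in the second $\opH^\bullet(G,L(\lambda))=0$ by the Linkage Principle.

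When $\Phi$ is of type $C_n$ and $\lambda=\omega_j$ with $j$ odd, I would invoke the second alternative: every element of the linkage class $W_p\cdot 0$ has the form $(w\rho-\rho)+p\gamma$ with $w\in W$ and $\gamma\in\Z\Phi$, hence lies in $\Z\Phi$, while $\omega_j\notin\Z\Phi$; so $\omega_j$ is not linked to $0$.

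In every other case I would establish $L(\lambda)=H^0(\lambda)$. If $\lambda$ is minuscule (or $\lambda=0$), then $L(\lambda)=V(\lambda)=H^0(\lambda)$, because the weights of $V(\lambda)$ form the single orbit $W\lambda$; this disposes of the exceptional types (the relevant weights being $\omega_1,\omega_6$ for $E_6$ and $\omega_7$ for $E_7$, while $E_8$, $F_4$, $G_2$ have no nonzero minuscule weight), together with the spin weight $\omega_n$ of type $B_n$ and the half-spin weights $\omega_{n-1},\omega_n$ of type $D_n$. The remaining weights in scope are fundamental weights $\omega_i$ of types $A_n$, $B_n$, $D_n$ for which $V(\omega_i)\cong\Lambda^i(V)$, with $V$ the natural module: here $1\le i\le n$ for $A_n$, $3\le i\le n-1$ for $B_n$ (recall $\omega_1=\alpha_0$ and $\omega_2=\wtalpha$ are excluded by hypothesis), and $1\le i\le n-2$ for $D_n$ (recall $\omega_2=\wtalpha$ is excluded). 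In each of these ranges $\Lambda^i(V)$ is simple---for $A_n$ this is standard, and for $B_n$ and $D_n$ it is \cite[II.8.21]{Jantzen:2003} together with its analogue for the odd orthogonal group---so $L(\omega_i)=V(\omega_i)=H^0(\omega_i)$. Combining all cases with the two reduction theorems yields the corollary.

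I do not expect a serious obstacle; the only point needing genuine care is the simplicity of the fundamental Weyl modules $V(\omega_i)\cong\Lambda^i(V)$ for types $B_n$ and $D_n$ in all admissible residue characteristics. For $D_n$ this is precisely \cite[II.8.21]{Jantzen:2003}; for $B_n$, where the natural module has odd dimension $2n+1$, one can, if one wants a self-contained argument, verify it through the Jantzen sum formula, in which the contributions coming from long roots and from short roots cancel. Everything else is bookkeeping: the reduction via Theorems~\ref{theorem:Ext2restrictioniso} and~\ref{theorem:H=Vornotlinked}, the behaviour of minuscule weights, and the coarse root-lattice observation used in type $C_n$.
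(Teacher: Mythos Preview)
Your proposal is correct and follows essentially the same approach as the paper's proof: reduce to Theorem~\ref{theorem:H=Vornotlinked}, then verify case by case that either $L(\lambda)=H^0(\lambda)$ (via \cite[II.8.21]{Jantzen:2003} for the classical types $A_n$, $B_n$, $D_n$ and for minuscule weights) or $\lambda\notin\Z\Phi$ (for type $C_n$ with $j$ odd). Two minor remarks: citing Theorem~\ref{theorem:Ext2restrictioniso} separately is redundant, since Theorem~\ref{theorem:H=Vornotlinked} already invokes it in its proof and directly delivers the conclusion $\opH^2(\Gfq,L(\lambda))=0$; and your worry about the $B_n$ case is unnecessary, as the simplicity of $V(\omega_i)\cong\Lambda^i(V)$ for $p>2$ is exactly part of \cite[II.8.21]{Jantzen:2003}, not merely an ``analogue'' of the $D_n$ statement.
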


\begin{proof}
First recall from \cite[\S 7.1]{UGA:2011} that if $\Phi$ is of classical type, then the condition $\lambda \leq \omega_j$ implies that either $\lambda = 0$ or $\lambda = \omega_i$ for some $1 \leq i \leq j$. Also, if $\lambda$ is a minuscule dominant weight, then $\lambda$ is minimal with respect to the partial order $\leq$ on $X(T)_+$. This combined with \cite[II.8.21]{Jantzen:2003} shows that if $\lambda$ is minuscule, or if $\Phi$ is of type $A_n$, $B_n$, or $D_n$, then $L(\lambda) = H^0(\lambda) = V(\lambda)$. On the other hand, if $\Phi$ is of type $C_n$ and $\lambda = \omega_j$ with $j$ odd, then $\lambda$ is not an element of the root lattice $\Z\Phi$, so it cannot be linked to zero under the dot action of the affine Weyl group. Now the result follows from Theorem \ref{theorem:H=Vornotlinked}.
\end{proof}

\begin{lemma} \label{lemma:largeprimevanishing}
Let $\lambda \in X(T)_+$ with $\lambda \leq \omega_j$ for some $1 \leq j \leq n$. Suppose $p$ and $q$ satisfy the conditions listed in Table \ref{table:pandq}, and $\lambda$ is not one of the weights listed in Table \ref{table:problemweights}. Set $h_\lambda = (\lambda,\alpha_0^\vee)$, where $\alpha_0$ is the highest short root in $\Phi$. Suppose $p \geq h+h_\lambda-1$. Then $\opH^2(\Gfq,L(\lambda)) \cong \opH^2(G,L(\lambda)) = 0$.
\end{lemma}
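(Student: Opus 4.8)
The plan is to reduce everything to showing that $L(\lambda) = H^0(\lambda)$, since once that is known the conclusion follows at once: $\opH^2(G,L(\lambda)) = \Ext_G^2(V(0),H^0(\lambda)) = 0$ by \cite[II.4.13]{Jantzen:2003} (as $\lambda \neq 0$), and Theorem~\ref{theorem:H=Vornotlinked} — whose hypotheses on $p$, $q$, and $\lambda$ are precisely those assumed here — gives $\opH^2(\Gfq,L(\lambda)) = 0$; the isomorphism asserted in the statement is then vacuous, or may be quoted directly from Theorem~\ref{theorem:Ext2restrictioniso}. If $\lambda = 0$ there is nothing to prove, so assume $\lambda \neq 0$.

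To obtain $L(\lambda) = H^0(\lambda)$, I would show that the hypothesis $p \geq h + h_\lambda - 1$ places $\lambda$ in the closure $\Czbar$ of the fundamental alcove, i.e.\ that $(\lambda+\rho,\alpha^\vee) \leq p$ for every $\alpha \in \Phi^+$. Since $\alpha_0^\vee$ is the highest root of the dual root system $\Phi^\vee$, one has $\alpha_0^\vee - \alpha^\vee \in \sum_{\gamma \in \Delta}\Z_{\geq 0}\,\gamma^\vee$ for every $\alpha \in \Phi^+$; combined with the dominance of $\lambda+\rho$ this yields $(\lambda+\rho,\alpha^\vee) \leq (\lambda+\rho,\alpha_0^\vee)$. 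Now $(\rho,\alpha_0^\vee) = \hght(\alpha_0^\vee) = h-1$, where $h$ is the Coxeter number of $\Phi$ (equivalently of $\Phi^\vee$), so $(\lambda+\rho,\alpha_0^\vee) = h_\lambda + h - 1 \leq p$ by hypothesis. Hence $\lambda \in \Czbar$.

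With $\lambda \in \Czbar$, the Linkage Principle together with strong linkage \cite[II.6.13, II.6.17]{Jantzen:2003} shows that every composition factor $L(\mu)$ of the Weyl module $V(\lambda)$ satisfies $\mu \uparrow \lambda$; but the only dominant weight $\mu$ with $\mu \uparrow \lambda$ and $\lambda \in \Czbar$ is $\mu = \lambda$. Therefore $V(\lambda)$ is irreducible, so $L(\lambda) = V(\lambda) = H^0(\lambda)$, and the lemma follows as explained above.

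The argument is essentially routine; the one point that needs care is the identification $(\rho,\alpha_0^\vee) = h-1$, for which it is essential that $\alpha_0$ be the \emph{highest short} root (equivalently that $\alpha_0^\vee$ be the highest \emph{long} coroot), so that the bound $(\lambda+\rho,\alpha^\vee)\leq p$ is controlled uniformly over all $\alpha\in\Phi^+$ by the single value $(\lambda+\rho,\alpha_0^\vee)$.
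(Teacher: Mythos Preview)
Your argument is correct and follows essentially the same route as the paper: show $\lambda\in\Czbar$ from the bound $p\geq h+h_\lambda-1$, deduce $L(\lambda)=H^0(\lambda)$, and then invoke Theorem~\ref{theorem:H=Vornotlinked}. The only difference is cosmetic---the paper cites \cite[II.5.6]{Jantzen:2003} directly for the irreducibility of $H^0(\lambda)$ when $\lambda\in\Czbar$, whereas you unpack this via strong linkage---and you supply the details of why $p\geq h+h_\lambda-1$ forces $\lambda\in\Czbar$, which the paper simply asserts.
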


\begin{proof}
If $p \geq h+h_\lambda - 1$, then $\lambda \in \Czbar$, so $L(\lambda) = H^0(\lambda)$ by \cite[II.5.6]{Jantzen:2003}. Now $\opH^2(\Gfq,L(\lambda)) = 0$ by Theorem \ref{theorem:H=Vornotlinked}.
\end{proof}

\begin{proposition} \label{proposition:H2vanishingexceptional}
Let $\lambda \in X(T)_+$ with $\lambda \leq \omega_j$ for some $1 \leq j \leq n$. Suppose $p$ and $q$ satisfy the conditions listed in Table \ref{table:pandq}, and $\lambda$ is not one of the weights listed in Table \ref{table:problemweights}. Assume that $\Phi$ is of exceptional type, and that $p$ does not equal one of the primes listed next to $\lambda$ in the Hasse diagram for $\Phi$ in \cite[\S 7.1]{UGA:2011}. Then $\opH^2(\Gfq,L(\lambda)) \cong \opH^2(G,L(\lambda)) = 0$. The conclusion $\opH^2(\Gfq,L(\lambda)) = 0$ also holds in the following additional cases:
\begin{enumerate}
\item $\Phi$ is of type $E_7$, $p=7$, and $\lambda = \omega_6$.
\item $\Phi$ is of type $E_7$, $p=19$, and $\lambda = 2\omega_1$;
\item $\Phi$ is of type $E_8$, $p=7$, and $\lambda = \omega_3$;
\item $\Phi$ is of type $E_8$, $p=31$, and $\lambda = 2\omega_8$; and
\item $\Phi$ is of type $F_4$, $p=13$, and $\lambda = 2\omega_4$.
\end{enumerate}
\end{proposition}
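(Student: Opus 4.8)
The plan is to split the argument into a uniform case and the five exceptional triples, reducing in each instance to the vanishing of a rational cohomology group for $G$. Since $\lambda \le \omega_j$ for some $j$, $\lambda$ is not among the weights of Table~\ref{table:problemweights}, and $p$ and $q$ satisfy the constraints of Table~\ref{table:pandq}, Theorem~\ref{theorem:Ext2restrictioniso} supplies a canonical isomorphism $\opH^2(\Gfq,L(\lambda)) \cong \opH^2(G,L(\lambda))$; so in every case it suffices to prove $\opH^2(G,L(\lambda)) = \Ext_G^2(k,L(\lambda)) = 0$.

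For the uniform statement I would use the fact that the primes attached to $\lambda$ in the Hasse diagram of \cite[\S 7.1]{UGA:2011} include every prime for which $L(\lambda) \ne H^0(\lambda)$, equivalently for which the Weyl module $V(\lambda)$ fails to be simple. Hence, when $p$ is not one of these primes, $L(\lambda) = H^0(\lambda)$, and Theorem~\ref{theorem:H=Vornotlinked} (via \cite[II.4.13]{Jantzen:2003}) yields $\opH^2(G,L(\lambda)) = \Ext_G^2(V(0),H^0(\lambda)) = 0$. This disposes of the main assertion.

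For the five additional triples $(\Phi,p,\lambda)$ in (1)--(5) the prime $p$ is one of the recorded primes, so $L(\lambda)$ is a proper submodule of $H^0(\lambda)$; set $C = H^0(\lambda)/L(\lambda)$. Applying $\Hom_G(k,-)$ to the short exact sequence $0 \to L(\lambda) \to H^0(\lambda) \to C \to 0$ and using $\opH^1(G,H^0(\lambda)) = \opH^2(G,H^0(\lambda)) = 0$ from \cite[II.4.13]{Jantzen:2003}, one obtains $\opH^2(G,L(\lambda)) \cong \opH^1(G,C)$, so it is enough to show $\opH^1(G,C) = 0$. The plan for each triple is: first read off the composition factors $L(\mu)$ of $C$ (each with $\mu < \lambda$ linked to $\lambda$) from the known submodule structure of the relevant Weyl module together with the rational cohomology data collected in Sections~\ref{subsection:vanishingresults}--\ref{subsection:E8p31}; then observe that in each instance every such $L(\mu)$ is either trivial, or has $\mu$ minuscule (so that $L(\mu) = H^0(\mu)$), or is a weight already known from \cite[\S\S 4--6]{UGA:2011} to satisfy $\opH^1(G,L(\mu)) = 0$; finally, conclude $\opH^1(G,C) = 0$ by induction on the composition length of $C$, using that in each extension the middle $\opH^1$ is trapped between the outer ones. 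In those of the five triples where $\lambda$ is not linked to $0$ under the affine Weyl group, Theorem~\ref{theorem:H=Vornotlinked} applies directly and this structural analysis is unnecessary.

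The hard part will be the last step: determining the composition factors of $H^0(\lambda)/L(\lambda)$ at the five bad primes is not formal and relies on explicit decomposition data for these exceptional-type representations (for instance via the Jantzen sum formula or known tables of small-dimensional modules). Once those factors, and the vanishing of their first rational cohomology, are established, the cohomological bookkeeping is routine.
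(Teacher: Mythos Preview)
Your reduction via Theorem~\ref{theorem:Ext2restrictioniso} to the vanishing of $\opH^2(G,L(\lambda))$ is exactly what the paper does, and the overall shape of the argument is right. There is, however, a genuine slip in your uniform case: the primes recorded next to $\lambda$ in the Hasse diagram of \cite[\S 7.1]{UGA:2011} are those for which $\lambda$ is \emph{linked to $0$} under $W_p$, not those for which $L(\lambda)\ne H^0(\lambda)$. These are different conditions --- $V(\lambda)$ can fail to be simple because $\lambda$ is linked to some $\mu$ with $0<\mu<\lambda$ without $\lambda$ being linked to $0$. So your assertion that avoiding the listed primes forces $L(\lambda)=H^0(\lambda)$ is not justified. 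The repair is easy: invoke the \emph{other} hypothesis of Theorem~\ref{theorem:H=Vornotlinked} (``$\lambda$ not linked to $0$''), which is precisely what the Hasse-diagram data gives you; combined with Lemma~\ref{lemma:largeprimevanishing} for $p>31$, this is exactly the paper's argument.

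For the five exceptional triples your plan via $\opH^2(G,L(\lambda))\cong\opH^1(G,C)$ with $C=H^0(\lambda)/L(\lambda)$ is workable and close to the paper's, but you should not assert that $C\ne 0$ in all five cases: in case~(3) ($E_8$, $p=7$, $\lambda=\omega_3$) one has $L(\lambda)=H^0(\lambda)$ by \cite[\S 4.6]{Jantzen:1991}, so $C=0$ and the vanishing is immediate. In case~(1) one has $C\cong k$ (since $\rad_G V(\omega_6)\cong k$), so $\opH^1(G,C)=0$ is trivial; your composition-factor analysis then only needs to be carried out for cases~(2), (4), (5), where the paper instead appeals to the proof of \cite[Theorem~6.3.1]{UGA:2011}. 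Note also that your fallback sentence (``in those triples where $\lambda$ is not linked to $0$, Theorem~\ref{theorem:H=Vornotlinked} applies directly'') is vacuous here: being among the five triples means $p$ is one of the listed primes, i.e., $\lambda$ \emph{is} linked to $0$.
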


\begin{proof}
If $p > 31$, then $\opH^2(G,L(\lambda))=0$ by Lemma \ref{lemma:largeprimevanishing}. If $5 < p \leq 31$, and if $p$ does not equal one of the primes listed next to $\lambda$ in the Hasse diagram for $\Phi$ in \cite[\S 7.1]{UGA:2011}, then $\lambda$ is not linked to zero under the dot action of the affine Weyl group, so $\opH^2(G,L(\lambda)) = 0$ by the Linkage Principle. If $\Phi$ is of type $E_7$, $p=7$, and $\lambda = \omega_6$, then $\rad_G V(\omega_6) \cong k$ (see the proof of \cite[Theorem 6.3.2]{UGA:2011}), so $\opH^2(G,L(\omega_6)) \cong \Ext_G^2(L(\omega_6),k) \cong \Ext_G^1(k,k)=0$ by \cite[II.4.13--14]{Jantzen:2003}. If $\Phi$ is of type $E_8$, $p=7$, and $\lambda = \omega_3$, then $L(\lambda) = H^0(\lambda)$ by \cite[\S 4.6]{Jantzen:1991}, so $\opH^2(G,L(\lambda)) = 0$ by \cite[II.4.13]{Jantzen:2003}. For the other cases, $\opH^2(G,L(\lambda)) = 0$ by the proof of \cite[Theorem 6.3.1]{UGA:2011}. Now the proposition follows from Theorem \ref{theorem:Ext2restrictioniso}.
\end{proof}

\subsection{\texorpdfstring{Analysis for type $E_{8}$ and $p=31$}{Analysis for type E8 and p=31}} \label{subsection:E8p31}

We now address the case $\Phi=E_8$, $p=31$, and $\lambda \in \set{\omega_6+\omega_8,\ \omega_7+\omega_8}$. Note here that $p \geq h$, where $h = 30$ is the Coxeter number of the root system $\Phi$. By Theorem \ref{theorem:Ext2restrictioniso}, to compute $\opH^2(\Gfq,L(\lambda))$ it suffices to compute the rational cohomology group $\opH^2(G,L(\lambda))$. For this we consider the structure of the induced modules $H^0(\lambda_i)$, with
\begin{align*}
\lambda_{1} &=0,\\
\lambda_{2} &=s_{0}\cdot 0=2\omega_{8},\\
\lambda_{3} &=(s_{0}s_{8})\cdot 0=\omega_{7}+\omega_{8}, \text{ and} \\
\lambda_{4} &=(s_{0}s_{8}s_{7})\cdot 0=\omega_{6}+\omega_{8}.
\end{align*}

Obviously, $H^0(0) = L(0) = k$. Next, the composition factors of $H^0(s_0 \cdot 0)$ have the form $L(\mu)$ with $\mu \leq s_0 \cdot 0$ and $\mu \in W_p \cdot 0$. From \cite[Figure 3]{UGA:2011}, we see that the only possible values for $\mu$ are $\mu = 0$ and $\mu = s_0 \cdot 0$. Taking $\lambda = 0$, $w = w_1 = 1$, and $s = s_0$ in \cite[II.7.18]{Jantzen:2003}, we see $[H^0(s_0 \cdot 0):L(0)] = [H^0(0):L(0)] = 1$, so $H^0(s_0 \cdot 0)$ is uniserial with socle $L(s_0 \cdot 0)$ and head $L(0)$. Now consider the structure of $H^0(s_0s_8 \cdot 0)$. Inspecting \cite[Figure~3]{UGA:2011} once more, the only possible composition factors of $H^0(s_0s_8 \cdot 0)$ are $L(s_0s_8 \cdot 0)$, $L(s_0 \cdot 0)$, and $L(0)$, though as explained in \cite[\S 6.4]{UGA:2011}, $L(0)$ cannot occur as a composition factor of $H^0(s_0s_8 \cdot 0)$. Then taking $\lambda = 0$, $w = w_1 = s_0$, and $s = s_8$ in \cite[II.7.18]{Jantzen:2003}, we see $[H^0(s_0s_8):L(s_0 \cdot 0)] = [H^0(s_0 \cdot 0):L(s_0 \cdot 0)] = 1$, so that $H^0(s_0s_8 \cdot 0)$ is uniserial with socle $L(s_0s_8 \cdot 0)$ and head $L(s_0 \cdot 0)$. Thus, the induced modules $H^0(\lambda_i)$ for $i \in \set{1,2,3}$ are all uniserial, and their composition series may be represented graphically as follows (diagrams are read from top to bottom, with the head on top and the socle at the bottom):


\begin{center}
\begin{tikzpicture}[scale=.8]
\tikzstyle{every node} = [fill=white, minimum size=2em]
\draw (-1.5,0) node {$H^0(0)$:};
\draw (0,0) node {$L(0)$};
\draw (3,0) node {$H^0(s_0 \cdot 0)$:};
\draw (5,1) node {$L(0)$} -- ++ (0,-2) node {$L(s_0 \cdot 0)$};
\draw (8.5,0) node {$H^0(s_0s_8 \cdot 0)$:};
\draw (11,1) node {$L(s_0 \cdot 0)$} -- ++ (0,-2) node {$L(s_0s_8 \cdot 0)$};
\end{tikzpicture}
\end{center}

Finally, consider the structure of $H^0(s_0s_8s_7 \cdot 0) = H^0(\omega_6+\omega_8)$. By \cite[II.7.19]{Jantzen:2003} with $\lambda = 0$, $\mu = -\omega_7$, $w = s_0s_8$, and $s = s_7$, there exists a short exact sequence
\begin{equation} \label{eq:sestranslation}
0 \rightarrow H^0(s_0s_8 \cdot 0) \rightarrow T_\mu^{0} H^0(s_0s_8 \cdot \mu) \rightarrow H^0(s_0s_8s_7 \cdot 0) \rightarrow 0.
\end{equation}
Moreover, $H^0(s_0s_8 \cdot \mu) = T_0^\mu H^0(s_0s_8 \cdot 0)$ by \cite[II.7.11]{Jantzen:2003}. Now $T_0^\mu L(s_0 \cdot 0) = 0$ by \cite[II.7.15]{Jantzen:2003}, because $s_0 \cdot \mu$ is in the lower closure (and not the upper closure) of the facet containing $s_0 \cdot 0$. Since the translation functor $T_0^\mu$ is exact, we conclude from \cite[II.7.14]{Jantzen:2003} and the composition series for $H^0(s_0s_8 \cdot 0)$ given above that $H^0(s_0s_8 \cdot \mu) = T_0^\mu H^0(s_0s_8 \cdot 0) = T_0^\mu L(s_0s_8 \cdot 0) = L(s_0s_8 \cdot \mu)$. In particular, this shows that $H^0(s_0s_8 \cdot \mu) = H^0(4\omega_8)$ is irreducible. Now by \cite[II.7.20]{Jantzen:2003}, the middle term $T_\mu^0 H^0(s_0s_8 \cdot \mu) = T_\mu^0 L(s_0s_8 \cdot \mu)$ in \eqref{eq:sestranslation} has head and socle isomorphic to $L(s_0s_8 \cdot 0)$, and these are the only occurrences of the composition factor $L(s_0s_8 \cdot 0)$ in $T_\mu^0 H^0(s_0s_8 \cdot \mu)$.

Set $M = \rad_G T_\mu^0 H^0(s_0s_8 \cdot \mu) / \soc_G T_\mu^0 H^0(s_0s_8 \cdot \mu)$, the \emph{heart} or \emph{Jantzen middle} of the module $T_\mu^0 H^0(s_0s_8 \cdot \mu)$. By \cite[II.7.20(b)]{Jantzen:2003}, the simple module $L(s_0s_8s_7 \cdot 0)$ occurs once as a composition factor of $M$, and by \cite[II.7.20(c)]{Jantzen:2003}, the remaining composition factors in $M$ must have the form $L(w' \cdot 0)$ with $w' \in W_p$, $w' \cdot 0 \in X(T)_+$, $w' \cdot 0 \neq s_0s_8 \cdot 0$, and $w's_7 \cdot 0 < w' \cdot 0$. Inspecting the short exact sequence \eqref{eq:sestranslation}, the composition series for $H^0(s_0s_8 \cdot 0)$, and \cite[Figure 3]{UGA:2011}, and recalling the discussion of \cite[\S 6.4]{UGA:2011}, we see that the only possible value for $w'$ is $w' = s_0$. We have $\Ext_G^1(L(s_0s_8 \cdot 0),L(s_0 \cdot 0)) = k$ by the calculation of the composition series for $H^0(s_0s_8 \cdot 0)$ (cf.\ \cite[II.2.14]{Jantzen:2003}), so by \cite[II.7.20(c)]{Jantzen:2003}, $L(s_0 \cdot 0)$ occurs once in $\hd_G M$, and once in $\soc_G M$. Using the fact that there are no self-extensions between simple modules, it follows then that either $M = L(s_0 \cdot 0) \oplus L(s_0s_8s_7 \cdot 0)$, or that $M$ has a composition series represented graphically as

\begin{center}
\begin{tikzpicture}[scale=.8]
\tikzstyle{every node} = [fill=white, minimum size=2em]
\draw (0,0) node {$M$:};
\draw (2.5,1.5) node {$L(s_0 \cdot 0)$} -- ++ (0,-1.5) node {$L(s_0 s_8 s_7 \cdot 0)$} -- ++ (0,-1.5) node {$L(s_0 \cdot 0)$};
\end{tikzpicture}
\end{center}

\subsection{Proof of Corollary \ref{corollary:H2vanishing}} \label{subsection:proofofH2vanishing}

We now complete the proof of Corollary \ref{corollary:H2vanishing}.

\begin{proof}[Proof of Corollary \ref{corollary:H2vanishing}]
Combining Corollary \ref{corollary:H2vanishingclassicalandminuscule}, Proposition \ref{proposition:H2vanishingexceptional}, and Lemma \ref{lemma:typeclargeprimevanishing} below, we obtain all of the cases of Corollary \ref{corollary:H2vanishing} except for $\Phi = E_8$, $p=31$, and $\lambda \in \set{\omega_6+\omega_8,\ \omega_7+\omega_8}$. We treat these two cases now. First, for $\lambda = \omega_7 + \omega_8 = s_0s_8 \cdot 0$ we have the short exact sequence
\[
0 \rightarrow L(s_0 s_8 \cdot 0) \rightarrow H^0(s_0 s_8 \cdot 0) \rightarrow L(s_0 \cdot 0) \rightarrow 0.
\]
Considering the associated long exact sequence in cohomology, and applying \cite[Theorem 1.2.3]{UGA:2011}, we get $\opH^2(G,L(s_0s_8 \cdot 0)) \cong \opH^1(G,L(s_0 \cdot 0)) \cong k$. Next, if the Lusztig Character Formula holds for type $E_8$ when $p=31$, then $\Ext_G^1(L(s_0s_8 \cdot 0),L(s_0s_8s_7 \cdot 0)) \neq 0$ by \cite[II.C.2(iii)]{Jantzen:2003}. This means that $L(s_0s_8 \cdot 0)$ must occur in the second socle layer of $H^0(s_0s_8s_7 \cdot 0)$. Consequently, the second possible composition series for $M$ given above is ruled out, so there exists a short exact sequence
\[
0 \rightarrow L(s_0s_8s_7 \cdot 0) \rightarrow H^0(s_0s_8s_7 \cdot 0) \rightarrow L(s_0s_8 \cdot 0) \rightarrow 0.
\]
Then considering the associated long sequence in cohomology, and applying \cite[Theorem 1.2.3]{UGA:2011}, one gets $\opH^2(G,L(s_0s_8s_7 \cdot 0)) \cong \opH^1(G,L(s_0s_8 \cdot 0)) = 0$. Now apply Theorem \ref{theorem:Ext2restrictioniso}.
\end{proof}

\subsection{Proof of Theorem \ref{theorem:otherH2calculations}} \label{subsection:additionalcalculations}

For the proof of Theorem \ref{theorem:otherH2calculations} we require the following lemmas:

\begin{lemma} \label{lemma:typeBlinkage}
Let $\Phi$ be of type $B_n$ with $n \geq 3$. Suppose $p > 3$, and that $n \not\equiv 1 \mod p$. Then $\alpha_0$ and $\wtalpha$ are not linked under the dot action of the affine Weyl group $W_p = W \ltimes p\Z\Phi$. The weights $\alpha_0$ and $\wtalpha$ are also not linked under the dot action of $W_p$ if $\Phi$ is of type $B_2$ and $p > 2$.
\end{lemma}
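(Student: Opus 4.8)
The approach is to use the elementary fact that if $\lambda$ and $\mu$ are linked under $W_{p}$, then $(\mu+\rho,\mu+\rho)-(\lambda+\rho,\lambda+\rho)\in p\Z$, where $(\cdot\,,\cdot)$ denotes the $W$-invariant bilinear form on $X(T)\otimes\R$ normalized so that short roots have squared length $1$ (equivalently, in the $\varepsilon$-realization of $\Phi$ one takes $(\varepsilon_{i},\varepsilon_{j})=\delta_{ij}$). To justify this I would first record two integrality statements: $(\gamma,\gamma)\in\Z$ for every $\gamma\in\Z\Phi$, and $2(x,\gamma)\in\Z$ for every $x\in X(T)$ and $\gamma\in\Z\Phi$ --- the latter because $(x,\alpha)=(x,\alpha^{\vee})\in\Z$ when $\alpha$ is long and $2(x,\alpha)=(x,\alpha^{\vee})\in\Z$ when $\alpha$ is short. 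Granting these, linkage means $\mu+\rho=w(\lambda+\rho)+p\gamma$ for some $w\in W$ and $\gamma\in\Z\Phi$, and expanding using $W$-invariance of the form yields
\[
(\mu+\rho,\mu+\rho)-(\lambda+\rho,\lambda+\rho)=2p\bigl(w(\lambda+\rho),\gamma\bigr)+p^{2}(\gamma,\gamma)\in p\Z .
\]

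Next I would carry out the coordinate computation for $\Phi$ of type $B_{n}$ with $n\geq 2$. From the standard Bourbaki data one has $\alpha_{0}=\varepsilon_{1}$ and $\wtalpha=\varepsilon_{1}+\varepsilon_{2}$, while $\rho=\sum_{j=1}^{n}\tfrac{2(n-j)+1}{2}\,\varepsilon_{j}$; a short computation then gives
\[
(\wtalpha+\rho,\wtalpha+\rho)-(\alpha_{0}+\rho,\alpha_{0}+\rho)=\tfrac14\bigl((2n-1)^{2}-(2n-3)^{2}\bigr)=2(n-1).
\]

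Finally I would combine the two: were $\alpha_{0}$ and $\wtalpha$ linked under $W_{p}$, the first step would force $2(n-1)\in p\Z$, i.e.\ $p\mid 2(n-1)$; since $p$ is odd this is equivalent to $p\mid(n-1)$, i.e.\ $n\equiv 1\pmod p$, contrary to hypothesis. For $\Phi$ of type $B_{2}$ the difference is $2(n-1)=2$, which no odd prime divides, so one needs only $p>2$ there. The substantive step is the coordinate computation of the second paragraph; the only point requiring care in the first step is verifying that the cross term $2p(w(\lambda+\rho),\gamma)$ lies in $p\Z$ despite the half-integral $\varepsilon$-coordinates of weights in type $B$, and that is exactly what the two integrality statements above provide. (Alternatively, the first step may be phrased via the reduction mod $p$ of the Harish-Chandra/Casimir linkage datum, as in the proof of the Linkage Principle in \cite[II.6]{Jantzen:2003}.)
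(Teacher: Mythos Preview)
Your proof is correct and takes a genuinely different route from the paper's. The paper argues directly with the $\varepsilon$-coordinates: it writes $w(\alpha_0+\rho)\equiv\wtalpha+\rho\pmod{p\Z\Phi}$, clears denominators, and uses that $W$ acts on $\Z^n$ by signed permutations to reduce to a small list of congruence conditions among $2n+1$, $2n-1$, $2n-3$, each of which forces $p\leq 3$ or $n\equiv 1\pmod p$. Your argument instead exploits the single scalar invariant $(\lambda+\rho,\lambda+\rho)\bmod p$, computes the difference of squared lengths to be $2(n-1)$, and concludes immediately.

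What each approach buys: your Casimir-type argument is cleaner and avoids any case analysis; it also incidentally shows that the conclusion for $n\geq 3$ already holds under the weaker hypothesis $p>2$ rather than $p>3$. The paper's coordinate approach, while longer, is more robust in situations where the scalar invariant is inconclusive (e.g., when two weights happen to satisfy $\|\lambda+\rho\|^2\equiv\|\mu+\rho\|^2\pmod p$ without being linked), and it generalizes more readily to the companion Lemma~\ref{lemma:typeClinkage} for type $C_n$ and to Lemma~\ref{lemma:typeclargeprimevanishing}, where one must rule out linkage of $\omega_j$ to $0$ for all even $j$ simultaneously. Your integrality check for the cross term $2p(w(\lambda+\rho),\gamma)$ is exactly the point that needs care, and you handle it correctly.
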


\begin{proof}
Suppose $\Phi$ is of type $B_n$ with $n \geq 3$ and $\alpha_0 = \omega_1$ and $\wtalpha = \omega_2$ are linked under the dot action of the affine Weyl group $W_p$. Then there exists $w \in W$ such that $w(\alpha_0 + \rho) \equiv \wtalpha + \rho \mod p\Z\Phi$. In terms of the $\ve$-basis for $\Phi$ \cite[\S 12.1]{Humphreys:1978}, this congruence equation can be rewritten as
\begin{equation} \label{eq:typeBcongruence} \textstyle
w(\frac{2n+1}{2}, \frac{2n-3}{2}, \frac{2n-5}{2}, \ldots, \frac{3}{2}, \frac{1}{2}) \equiv (\frac{2n+1}{2}, \frac{2n-1}{2}, \frac{2n-5}{2}, \ldots, \frac{3}{2}, \frac{1}{2}) \mod p\Z\Phi,
\end{equation}
where we have written $(a_1,\ldots,a_n)$ to denote $\sum_{i=1}^n a_i \ve_i$. In terms of the $\ve$-basis, $\Z\Phi$ consists of all integral combinations of the $\ve_i$, and the Weyl group acts as all permutations and sign changes of the set $\set{\ve_1,\ldots,\ve_n}$, so we can interpret \eqref{eq:typeBcongruence} as a system of congruences in $\Z[\frac{1}{2}]$ modulo $p\Z$. Thus, it makes sense to multiply both sides of \eqref{eq:typeBcongruence} by $2$, to obtain the new congruence equation
\begin{equation} \label{eq:typeBcongruence2}
w(2n+1,2n-3,2n-5,\ldots,3,1) \equiv (2n+1,2n-1,2n-5,\ldots,3,1) \mod p\Z^n.
\end{equation}
Here we consider both sides of the equation as elements of $\Z^n$, with $W$ acting on $\Z^n$ via place permutations and sign changes. There are now eight possibilities:
\begin{itemize}
\item $2n+1 \equiv \pm (2n+1) \mod p$ and $2n-1 \equiv \pm (2n-3) \mod p$, or
\item $2n+1 \equiv \pm (2n-3) \mod p$ and $2n-1 \equiv \pm (2n+1) \mod p$.
\end{itemize}
The only pair of congruence equations that does not contradict the assumption $p > 3$ is $2n+1 \equiv 2n+1 \mod p$ and $2n-1 \equiv -(2n-3) \mod p$. This pair of congruence equations is equivalent to the condition $n \equiv 1 \mod p$, which contradicts the assumption on $n$. Thus, if $p > 3$ and if $n \not\equiv 1 \mod p$, then $\alpha_0$ is not linked to $\wtalpha$ under the dot action of $W_p$. The statement for type $B_2$ can be similarly verified.
\end{proof}

\begin{lemma} \label{lemma:typeClinkage}
Let $\Phi$ be of type $C_n$. Suppose $p > 3$. Then $\alpha_0$ and $\wtalpha$ are not linked under the dot action of the extended affine Weyl group $\wt{W}_p = W \ltimes pX(T)$. In particular, $\alpha_0$ and $\wtalpha$ are not linked under the dot action of the ordinary affine Weyl group $W_p$.
\end{lemma}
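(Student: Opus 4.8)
The plan is to run the same kind of argument as in Lemma~\ref{lemma:typeBlinkage}, but to replace the coordinate-by-coordinate case analysis by a single invariant-theoretic observation. From Table~\ref{table:problemweights} one has, in type $C_n$, that $\alpha_0 = \omega_2$ and $\wtalpha = 2\omega_1$. First I would pass to the $\ve$-basis for $\Phi$ as in \cite[\S 12.1]{Humphreys:1978}: there $X(T)$ is identified with $\Z^n$, the Weyl group $W$ acts by all permutations and sign changes of the coordinates, $\rho = (n, n-1, \ldots, 2, 1)$, and $pX(T) = p\Z^n$. All roots lie in $\Z^n$, and in particular $\alpha_0 = (1,1,0,\ldots,0)$ and $\wtalpha = (2,0,\ldots,0)$, so that
\[
\rho + \alpha_0 = (n+1,\, n,\, n-2,\, n-3,\, \ldots,\, 1), \qquad \rho + \wtalpha = (n+2,\, n-1,\, n-2,\, \ldots,\, 1),
\]
which agree in all coordinates past the second.

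By definition of the dot action, $\alpha_0$ and $\wtalpha$ are linked under $\wt{W}_p = W \ltimes pX(T)$ precisely when there exists $w \in W$ with $w(\rho + \alpha_0) \equiv \rho + \wtalpha \pmod{p\Z^n}$. The key step is to observe that the quadratic form $Q(v) = \sum_{i=1}^n v_i^2$ on $\Z^n$ is $W$-invariant, since $W$ acts by signed permutations, and that $Q(v + pz) \equiv Q(v) \pmod{p}$ for every $z \in \Z^n$. Hence existence of such a $w$ would force $Q(\rho + \alpha_0) \equiv Q(\rho + \wtalpha) \pmod{p}$. But since the coordinates past the second cancel,
\[
Q(\rho + \alpha_0) - Q(\rho + \wtalpha) = \bigl[(n+1)^2 + n^2\bigr] - \bigl[(n+2)^2 + (n-1)^2\bigr] = -4,
\]
so linkage would force $p \mid 4$, which is impossible for a prime $p > 3$. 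Therefore $\alpha_0$ and $\wtalpha$ are not linked under $\wt{W}_p$, and a fortiori not under $W_p = W \ltimes p\Z\Phi \subseteq \wt{W}_p$.

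I do not expect a genuine obstacle here. The only point requiring care is that one must work with the full weight-lattice translations $pX(T)$ rather than $p\Z\Phi$; but the sum-of-squares argument is insensitive to which of the two lattices one translates by, so the stronger assertion for $\wt{W}_p$ is obtained at no extra cost. It is also worth double-checking, before writing the final version, the identification $X(T) \cong \Z^n$ in the $\ve$-basis for the simply connected group of type $C_n$, together with the equalities $\alpha_0 = \omega_2$ and $\wtalpha = 2\omega_1$ recorded in Table~\ref{table:problemweights}.
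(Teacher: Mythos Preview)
Your argument is correct, and it is genuinely different from what the paper has in mind. The paper's proof is ``similar to that of Lemma~\ref{lemma:typeBlinkage}'': one writes out $w(\rho+\alpha_0) \equiv \rho+\wtalpha \pmod{p\Z^n}$ in $\ve$-coordinates and runs through the finitely many sign/permutation possibilities for where the two distinguished coordinates can land, eliminating each case using $p>3$. Your approach replaces that case analysis by the single observation that the sum-of-squares form $Q$ is invariant under signed permutations and is well-defined modulo $p$ on $\Z^n/p\Z^n$, so linkage would force $p \mid \bigl(Q(\rho+\alpha_0)-Q(\rho+\wtalpha)\bigr) = -4$. This is cleaner and more conceptual; it also makes transparent why the bound $p>3$ is exactly what is needed, and it handles $\wt W_p$ and $W_p$ uniformly since the argument is insensitive to which sublattice of $\Z^n$ one translates by. The paper's coordinate approach, by contrast, has the virtue of being mechanical and reusable verbatim from the type~$B$ case. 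Your verification that $X(T)=\Z^n$ in the $\ve$-basis for simply connected type~$C_n$ (which follows from $\omega_i = \ve_1+\cdots+\ve_i$) and the arithmetic $[(n+1)^2+n^2]-[(n+2)^2+(n-1)^2]=-4$ are both fine; nothing further needs checking.
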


\begin{proof}
The proof is similar to that of Lemma \ref{lemma:typeBlinkage}, so is left to the reader.
\end{proof}

We now complete the proof of Theorem \ref{theorem:otherH2calculations}.

\begin{proof}[Proof of Theorem \ref{theorem:otherH2calculations}]
The strategy of the proof is as follows. First, we verify that the hypotheses of Theorem~\ref{theorem:alternatecalculation} are satisfied with $\sigma = \lambda$, so that $\opH^2(\Gfq,L(\lambda)) \cong \Ext_G^2(V(\lambda)^{(r)},L(\lambda) \otimes H^0(\lambda))$. Next, we observe by Remarks \ref{remark:boundonExt2dimension}, \ref{remark:nonzeroTfpweight}, and \ref{remark:nonzeroTfqweight} that
\[
\Ext_G^2(V(\lambda)^{(r)},L(\lambda) \otimes H^0(\lambda)) \cong \Hom_{B/B_r}(V(\lambda)^{(r)},\Ext_{B_r}^2(k,L(\lambda) \otimes \lambda))
\]
is at most one-dimensional. Finally, we show that the $\Hom_{B/B_r}(V(\lambda)^{(r)},\Ext_{B_r}^2(k,L(\lambda) \otimes \lambda))$ is zero if $\lambda = \alpha_0$ and $\Phi$ has two root lengths, and is one-dimensional otherwise.

To begin, the assumptions on $p$ imply that $L(\lambda) = H^0(\lambda)$. If $\lambda = \wtalpha$, then this is true by \cite[Lemma 4.1A]{McNinch:2002}, since $H^0(\wtalpha) \cong \g$. If $\Phi$ is of type $C_n$ and $\lambda = \omega_2$, then the condition $L(\lambda) = H^0(\lambda)$ follows because the zero weight is the only dominant weight less than $\omega_2$, and the assumption that $p$ does not divide $n$ implies that $\Ext_G^1(k,L(\omega_2)) = 0$ (cf.\ \cite{Kleshchev:2001} or the reformulation in \cite[Theorem 5.2.1]{UGA:2011}), hence that the trivial module does not occur as a composition factor of $H^0(\omega_2)$. For the other Lie types and the remaining possible values for $\lambda$, the condition $L(\lambda) = H^0(\lambda)$ follows from \cite[\S 4.6]{Jantzen:1991} and \cite[II.8.21]{Jantzen:2003}. Next, if $\mu \in X(T)_+$ and $\mu \notin \set{0,\lambda}$, then $\Ext_G^2(V(\mu)^{(r)},L(\lambda) \otimes H^0(\mu)) = 0$ by Remarks \ref{remark:boundonExt2dimension}, \ref{remark:nonzeroTfpweight}, and \ref{remark:nonzeroTfqweight}, while for $\mu = 0$, $\Ext_G^2(k,L(\lambda)) = 0$ by \cite[II.4.13]{Jantzen:2003} since $L(\lambda) = H^0(\lambda)$. Then (b) in Theorem \ref{theorem:alternatecalculation} holds with $\sigma = \lambda$.

To verify (c) in Theorem \ref{theorem:alternatecalculation}, it suffices to show in each case that $S_{< \lambda}$ is either $0$ or the trivial module. This is sufficient because $\Ext_G^3(k,L(\lambda)) = 0$ by the fact $L(\lambda) = H^0(\lambda)$. Equivalently, by the description in Section \ref{subsection:alternatecalculation} of the filtration on $S_{<\lambda}$, it suffices to show that the only possible dominant weight $\nu < \lambda$ that is also linked to $\lambda$ is the zero weight. Consulting the lists in \cite[\S 7.1]{UGA:2011}\footnote{In the preprint version of \cite{UGA:2011}, the information for type $D_n$ in Section 7.1 is incorrect. The correct description for type $D_n$ is as follows: Set $\omega_0 = 0$. The weights $\omega_n$, $\omega_{n-1}$, and $\omega_1$ are minimal with respect to $\leq$. The remaining fundamental dominant weights form two independent chains, $\omega_{n-2} > \omega_{n-4} > \cdots$ and $\omega_{n-3} > \omega_{n-5} > \cdots$, with $\omega_0$ appearing as the smallest term in the chain having even indices, and $\omega_1$ appearing as the smallest term in the chain with odd indices.} (cf.\ also Remark \ref{remark:minimaldomroot}), and applying Lemmas \ref{lemma:typeBlinkage} and \ref{lemma:typeClinkage}, this is clear if $\Phi$ is not of type $F_4$ or $G_2$. For types $F_4$ and $G_2$ we have checked by explicit computer calculations using GAP \cite{GAP4} that $\wtalpha$ is not linked to $\alpha_0$ if $p > 3$. Thus, (c) holds for $\sigma = \lambda$. Finally, (d) follows from the description of the filtration on $Q_{\nleq \lambda}$, the proof of Theorem \ref{theorem:vanishingconditionforExt1iso}, and the fact $\Ext_G^1(k,L(\lambda)) = \Ext_G^1(k,H^0(\lambda)) = 0$. Then the hypotheses of Theorem \ref{theorem:alternatecalculation} are satisfied for $\sigma = \lambda$.

To finish the proof we must compute the space
\begin{equation} \label{eq:Homiso}
\Hom_{B/B_r}(V(\lambda)^{(r)},\Ext_{B_r}^2(k,L(\lambda) \otimes \lambda)) \cong \Hom_{B/B_r}(V(\lambda)^{(r)},\Ext_{U_r}^2(k,L(\lambda)) \otimes \lambda).
\end{equation}
The above identification follows from the isomorphisms $\Ext_{B_r}^2(k,L(\lambda) \otimes \lambda) \cong \Ext_{U_r}^2(k,L(\lambda) \otimes \lambda)^{T_r}$ and $\Ext_{U_r}^2(k,L(\lambda) \otimes \lambda) \cong \Ext_{U_r}^2(k,L(\lambda)) \otimes \lambda$, and the fact that any $B/B_r$-module homomorphism $V(\lambda)^{(r)} \rightarrow \Ext_{U_r}^2(k,L(\lambda) \otimes \lambda)$ automatically has image in the subspace of $T_r$-invariants. We divide the remainder of the proof into several cases, depending on the values of $r$ and $\lambda$.

{\bf The case $\mathbf{r=1}$.} Consider the spectral sequence \eqref{eq:U1Liealgspecseq} from the proof of Lemma \ref{lemma:Ext2U1Tfqinvariants}. Applying the exact functor $- \otimes \lambda$ to \eqref{eq:U1Liealgspecseq}, we obtain the new spectral sequence
\begin{equation} \label{eq:newspecseq}
E_2^{2i,j} \otimes \lambda = S^i(\fu^*)^{(1)} \otimes \opH^j(\fu,L(\lambda)) \otimes \lambda \Rightarrow \Ext_{U_1}^{2i+j}(k,L(\lambda)) \otimes \lambda.
\end{equation}
Since $E_\infty^{1,1} = E_2^{1,1} = 0$, we obtain from \eqref{eq:newspecseq} the short exact sequence of $B$-modules
\begin{equation} \label{eq:abutmentses}
0 \rightarrow E_\infty^{2,0} \otimes \lambda \rightarrow \Ext_{U_1}^2(k,L(\lambda)) \otimes \lambda \rightarrow E_\infty^{0,2} \otimes \lambda \rightarrow 0.
\end{equation}
The arguments for the case $r = 1$ now diverge depending on the value of $\lambda$.

{\bf The case $\mathbf{r=1}$, Type $\mathbf{A_2}$, $\lambda \in \set{\omega_1,\omega_2}$.} In this case $p = q = 5$. Applying the exact functor $(-)^{T_1}$ to \eqref{eq:abutmentses}, we obtain the new short exact sequence of $B/B_1$-modules
\[
0 \rightarrow (E_\infty^{2,0} \otimes \lambda)^{T_1} \rightarrow \Ext_{B_1}^2(k,L(\lambda) \otimes \lambda) \rightarrow (E_\infty^{0,2} \otimes \lambda)^{T_1} \rightarrow 0.
\]
We claim that $(E_2^{2,0} \otimes \lambda)^{T_1} = 0$, and hence that $(E_\infty^{2,0} \otimes \lambda)^{T_1} = 0$. Indeed, the weights of $E_2^{2,0} \otimes \lambda \cong (\fu^*)^{(1)} \otimes \opH^0(\fu,L(\lambda)) \otimes \lambda$ have the form $5\beta + w_0\lambda + \lambda = 5\beta \pm (\omega_1 - \omega_2)$ for $\beta \in \Phi^+$, and one can check that no weight of this form is an element of $5X(T)$. Then $\Ext_{B_1}^2(k,L(\lambda) \otimes \lambda) \cong (E_\infty^{0,2} \otimes \lambda)^{T_1}$. Next we show that $\Hom_{B/B_1}(V(\lambda)^{(1)},E_\infty^{0,2} \otimes \lambda)$ is nonzero; we verify this for the case $\lambda = \omega_1$, and leave the completely analogous case $\lambda = \omega_2$ to the reader. First, $E_2^{0,2} \cong \opH^2(\fu,L(\omega_1))$ has $T$-weights $4\omega_1$ and $4\omega_1 - \omega_2$ by \cite[Theorem 4.2.1]{UGA:2009} (cf.\ also the footnote in \cite[\S 6.1]{UGA:2011}). These weights are incomparable in $X(T)$, so we conclude that $\opH^2(\fu,L(\omega_1))$ is semisimple as a $B$-module. The space $E_\infty^{0,2}$ is a $B$-module subquotient of $E_2^{0,2}$, so it is also semisimple as a $B$-module. Then $5\omega_1$ occurs with multiplicity one in the semisimple $B$-module $E_\infty^{0,2} \otimes \omega_1$ by Remark \ref{remark:nonzeroTfpweight}. Since $V(\omega_1)^{(1)}$ is generated as a $B$-module by its $5\omega_1$-weight space, we conclude that $\Hom_{B/B_1}(V(\omega_1)^{(1)}, E_\infty^{0,2} \otimes \lambda) \cong k$.

{\bf The case $\mathbf{r=1}$, $\lambda \in \set{\alpha_0,\wtalpha}$.} It follows from Remark \ref{remark:nonzeroTfpweight} that $p\lambda$ occurs once as a weight of $E_\infty^{2,0} \otimes \lambda$, but does not occur as a weight of $E_\infty^{0,2} \otimes \lambda$. Then $\Hom_{B/B_1}(V(\lambda)^{(1)},E_\infty^{0,2} \otimes \lambda) = 0$, so from \eqref{eq:abutmentses} we obtain
\[
\Hom_{B/B_1}(V(\lambda)^{(1)},E_\infty^{2,0} \otimes \lambda) \cong \Hom_{B/B_1}(V(\lambda)^{(1)},\Ext_{U_1}^2(k,L(\lambda)) \otimes \lambda).
\]

We claim that $E_\infty^{2,0} \cong E_2^{2,0}$, or equivalently, that the differential $d_2: E_2^{0,1} \rightarrow E_2^{2,0}$ is the zero map. To prove the claim, we show that no weight of $E_2^{2,0} \otimes \lambda$ is a weight of $E_2^{0,1} \otimes \lambda$, which implies that the differential is the zero map. Since $\lambda$ is a restricted weight, one has $\opH^0(\fu,L(\lambda)) = L(\lambda)^{\fu} = L(\lambda)_{w_0\lambda}$, so the weights of $E_2^{2,0} \otimes \lambda \cong (\fu^*)^{(1)} \otimes \opH^0(\fu,L(\lambda)) \otimes \lambda$ have the form $p\beta + w_0 \lambda + \lambda = p\beta$ for $\beta \in \Phi^+$. In particular, the weights of $E_2^{2,0} \otimes \lambda$ are all nonzero $T_1$-invariant weights. Now we will show that the zero weight is the only $T_1$-invariant weight in $E_2^{0,1} \otimes \lambda$, hence that no weight of $E_2^{2,0} \otimes \lambda$ is a weight of $E_2^{0,1} \otimes \lambda$. By \cite[\S 2.5]{UGA:2009} and \cite[Proposition 2.2]{Bendel:2007}, the weights of $E_2^{0,1} \otimes \lambda \cong \opH^1(\fu,L(\lambda)) \otimes \lambda$ have the form $\alpha + \nu + \lambda$ for $\alpha \in \Delta$ and $\nu$ a weight of $L(\lambda)$. Since $\lambda \in \set{\alpha_0,\wtalpha}$, one has $\nu \in \Phi \cup \set{0}$. Suppose $\alpha + \nu + \lambda = p \sigma$ for some $\sigma \in X(T)$. For $\gamma \in \Delta$, one has
\[
p \abs{(\sigma,\gamma^\vee)} \leq \abs{(\alpha,\gamma^\vee)} + \abs{(\nu,\gamma^\vee)} + (\lambda,\gamma^\vee) \leq \begin{cases} 7 & \text{if $\Phi$ is of type $G_2$,} \\ 6 & \text{otherwise,} \end{cases}
\]
because $\abs{(\alpha,\gamma^\vee)}$ and $\abs{(\nu,\gamma^\vee)}$ are each at most $2$ (resp.\ $3$ if $\Phi$ is of type $G_2$), and $(\lambda,\gamma^\vee)$ is at most $2$ (resp.\ $1$ if $\Phi$ is of type $G_2$). Thus, if $p > 5$ (resp.\ $p > 7$ for type $G_2$), then $(\sigma,\gamma^\vee) = 0$ for all $\gamma \in \Delta$, and hence $\sigma = 0$. It remains to show that $\sigma = 0$ if $p = 7$ and $\Phi$ is of type $G_2$, or if $p=5$ and $\Phi$ is of type $A_n$, $B_n$, $D_n$, or $E_6$. We give the argument for types $A_n$, $B_n$, and $D_n$, and leave the details for types $E_6$ and $G_2$ to the reader, since those types can be handled similarly. Observe that $\sigma \in \Z\Phi$, since $\alpha + \nu + \lambda \in \Z\Phi$ and $p \nmid [X(T):\Z\Phi]$. Write $\alpha + \nu = \sum_{\delta \in \Delta} c_\delta \delta$, and $\alpha + \nu + \lambda = \sum_{\delta \in \Delta} d_\delta \delta$ with $c_\delta,d_\delta \in \Z$. If $\nu \leq 0$, then $-2 \leq c_\delta \leq 1$ and $-1 \leq d_\delta \leq 3$. In particular, if $\nu \leq 0$, then $\alpha + \nu + \lambda \in 5\Z\Phi$ only if $d_\delta = 0$ for all $\delta \in \Delta$, that is, only if $\sigma = 0$. On the other hand, if $\nu \geq 0$, then $0 \leq c_\delta \leq 3$, and $1 \leq d_\delta \leq 5$. Moreover, $d_\delta = 5$ only if $\delta = \alpha$. Since $\alpha + \nu + \lambda = 5\alpha$ if and only if $\nu + \lambda = 4\alpha$, which is absurd, we conclude that $\alpha + \nu + \lambda \notin 5\Z\Phi$. This completes the proof of the claim that no weight from $E_2^{2,0} \otimes \lambda$ is a weight of $E_2^{0,1} \otimes \lambda$, and hence that $E_\infty^{2,0} \cong E_2^{2,0}$.

We have shown that $E_\infty^{2,0} \otimes \lambda \cong E_2^{2,0} \otimes \lambda \cong (\fu^*)^{(1)} \otimes w_0\lambda \otimes \lambda \cong (\fu^*)^{(1)}$ as $B$-modules. Then
\[
\Hom_{B/B_1}(V(\lambda)^{(1)},\Ext_{U_1}^2(k,L(\lambda)) \otimes \lambda) \cong \Hom_B(V(\lambda),\fu^*) \cong \Hom_G(V(\lambda),\ind_B^G \fu^*).
\]
We have $\ind_B^G \fu^* \cong \opH^2(G_1,k)^{(-1)}$ by \cite[Theorem 6.2]{Bendel:2007}, and $\opH^2(G_1,k)^{(-1)} \cong \g^* \cong H^0(\wtalpha)$ by \cite[Lemma 3.11]{Friedlander:1983}. Then $\Hom_{B/B_1}(V(\lambda)^{(1)},\Ext_{U_1}^2(k,L(\lambda)) \otimes \lambda) \cong \Hom_G(V(\lambda),H^0(\wtalpha))$ is zero if $\lambda = \alpha_0 \neq \wtalpha$, and is one-dimensional otherwise by \cite[II.4.13]{Jantzen:2003}.

{\bf The case $\mathbf{r > 1}$.} In this case again $\lambda \in \set{\alpha_0,\wtalpha}$. Consider the spectral sequence \eqref{eq:LHSforUrTfqinvariants} from the proof of Lemma \ref{lemma:Ext2UrTfqinvariants}. As in the case $r=1$, we apply the exact functor $- \otimes \lambda$ to obtain the new spectral sequence
\[
E_2^{i,j} \otimes \lambda \cong \Ext_{U_r/U_1}^i(k,\Ext_{U_1}^j(k,L(\lambda)) \otimes \lambda) \Rightarrow \Ext_{U_r}^{i+j}(k,L(\lambda)) \otimes \lambda.
\]
It follows from the proof of Lemma \ref{lemma:Ext2UrTfqinvariants} and from Remark \ref{remark:nonzeroTfqweight} that the weight $p^r \lambda$ occurs with multiplicity one in $\Ext_{U_r}^2(k,L(\lambda) \otimes \lambda)$. Specifically, the weight $p^r \lambda$ occurs in the $B$-submodule $E_\infty^{2,0} \otimes \lambda$ of $\Ext_{U_r}^2(k,L(\lambda)) \otimes \lambda$ arising from the filtration on the abutment of the spectral sequence. Then any $B/B_r$-module homomorphism $V(\lambda)^{(r)} \rightarrow \Ext_{U_r}^2(k,L(\lambda)) \otimes \lambda$ will necessarily have image in the submodule $E_\infty^{2,0} \otimes \lambda$ of $\Ext_{U_r}^2(k,L(\lambda)) \otimes \lambda$.

Observe that $E_2^{0,1} = \Ext_{U_1}^1(k,L(\lambda))^{U_r/U_1}$ is $B$-semisimple by Theorem \ref{theorem:semisimplicity}. Then there exists a semisimple $B$-submodule $Q$ of $\Ext_{U_1}^1(k,L(\lambda))$ such that the following sequence is exact:
\[
0 \rightarrow Q \otimes \lambda \stackrel{d_2}{\rightarrow} E_2^{2,0} \otimes \lambda \rightarrow E_\infty^{2,0} \otimes \lambda \rightarrow 0.
\]
As in the case $r=1$, we apply the exact functor $(-)^{T_r}$ to obtain the new short exact sequence
\begin{equation} \label{eq:Trinvariantsses}
0 \rightarrow (Q \otimes \lambda)^{T_r} \rightarrow (E_2^{2,0} \otimes \lambda)^{T_r} \rightarrow (E_\infty^{2,0} \otimes \lambda)^{T_r} \rightarrow 0.
\end{equation}
Now the weights of $(Q \otimes \lambda)^{T_r}$ are weights of $(\Ext_{U_r}^1(k,L(\lambda)) \otimes \lambda)^{T_r} \cong \Ext_{B_r}^1(k,L(\lambda) \otimes \lambda)$, so it follows from the proof of Lemma \ref{lemma:HomtoExt2vanish} that all weights of $(Q \otimes \lambda)^{T_r}$ are dominant.\footnote{The first part of the proof of Lemma \ref{lemma:HomtoExt2vanish}, where it is established that the weights of $\Ext_{B_r}^1(k,L(\lambda) \otimes \mu)$ are all dominant, only requires that $\lambda \in X_r(T)$, $\mu \in X(T)_+$, and that $p^r$ be large relative to the weights of $\Ext_{U_r}^1(k,L(\lambda))$. These conditions are satisfied in the present situation by our assumptions on $p$ and $q$.} Then $(\ind_{B/B_r}^{G/G_r} (Q \otimes \lambda)^{T_r})^{(-r)}$ admits a good filtration (in fact, is a direct sum of induced modules), and $R^i \ind_{B/B_r}^{G/G_r} (Q \otimes \lambda)^{T_r} = 0$ for all $i > 0$ by Kempf's Vanishing Theorem, so it follows from \cite[I.4.5, II.4.16]{Jantzen:2003} that
\[
\Ext_{B/B_r}^1(V(\lambda)^{(r)},(Q \otimes \lambda)^{T_r}) \cong \Ext_{G/G_r}^1(V(\lambda)^{(r)},\ind_{B/B_r}^{G/G_r} (Q \otimes \lambda)^{T_r}) = 0.
\]
Consequently, by considering the long exact sequence in cohomology associated to the short exact sequence \eqref{eq:Trinvariantsses} and the functor $\Hom_{B/B_r}(V(\lambda)^{(r)},-)$, one sees that $\Hom_{B/B_r}(V(\lambda)^{(r)},(E_2^{2,0} \otimes \lambda)^{T_r})$ surjects onto $\Hom_{B/B_r}(V(\lambda)^{(r)},(E_\infty^{2,0} \otimes \lambda)^{T_r}) \cong \Hom_{B/B_r}(V(\lambda)^{(r)},\Ext_{U_r}^2(k,L(\lambda)) \otimes \lambda)$. Now
\[
(E_2^{2,0} \otimes \lambda)^{T_r} \cong \Ext_{B_r}^2(k,\Hom_{U_1}(k,L(\lambda)) \otimes \lambda) \cong \Ext_{B_r}^2(k,w_0\lambda \otimes \lambda) \cong \opH^2(B_r,k) \cong (\fu^*)^{(r)}
\]
 by \cite[Proposition 5.4 and Theorem 5.3]{Bendel:2007}. Then it follows as in the case $r=1$ that
\[
\Hom_{B/B_r}(V(\lambda)^{(r)},(E_2^{2,0} \otimes \lambda)^{T_r}) \cong \Hom_B(V(\lambda),\fu^*) \cong \Hom_G(V(\lambda),\ind_B^G \fu^*)
\]
is zero if $\lambda = \alpha_0 \neq \wtalpha$, and is one-dimensional otherwise. This completes the case $r > 1$, and thus completes the proof.
\end{proof}

\section{Partial results for Type C} \label{section:typeC}

\subsection{Reduction to small primes} \label{subsection:TypeCreduction}

Let $G$ be a simple, simply-connected algebraic group with underlying root system of type $C_n$ with $n \geq 3$, and let $\lambda = \omega_j$ be a fundamental dominant weight with $j \neq 2$. If $p > 3$, then $\opH^2(G,L(\omega_j)) \cong \opH^2(\Gfq,L(\omega_j))$ by Theorem \ref{theorem:Ext2restrictioniso} provided $q > 5$ if $j$ is even. If $j$ is odd, then $\omega_j \notin \Z\Phi$, and $\opH^2(G,L(\omega_j)) = 0$ by the Linkage Principle. So for the remainder of this section assume that $j$ is even. If $p > h = 2n$, then we still have $\opH^2(G,L(\omega_j)) = 0$ by Lemma \ref{lemma:largeprimevanishing}. In fact, it turns out that $\opH^2(G,L(\omega_j)) = 0$ under the weaker assumption $p > n$.

\begin{lemma} \label{lemma:typeclargeprimevanishing}
Let $G$ be a simple algebraic group of type $C_n$ with $n \geq 3$. Let $\omega_j$ be a fundamental dominant weight with $j$ even. Suppose $p > n$. Then $\omega_j$ is not linked to $0$ under the dot action of the extended affine Weyl group $\wh{W}_p = W \ltimes pX(T)$. In particular, $\omega_j$ is not linked to zero under the dot action of the ordinary affine Weyl group $W_p$, and consequently, $\opH^2(G,L(\omega_j)) = 0$.
\end{lemma}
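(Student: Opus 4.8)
The plan is to translate the linkage condition into an explicit congruence among weight coordinates and then extract a contradiction by a sum-of-squares argument; this is considerably shorter than the case analysis used for type $B_n$ in Lemma~\ref{lemma:typeBlinkage}.

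First I would work in the $\varepsilon$-basis of \cite[\S 12.1]{Humphreys:1978}, in which $X(T)$ for any group of type $C_n$ is contained in $\Z^n$ (it is all of $\Z^n$ for the simply connected group), $W$ acts by permutations and sign changes of the coordinates, $\rho = (n, n-1, \ldots, 1)$, and $\omega_j = \varepsilon_1 + \cdots + \varepsilon_j$. The weight $\omega_j$ is linked to $0$ under $\wh{W}_p = W \ltimes pX(T)$ precisely when there is a $w \in W$ with $w(\omega_j + \rho) \equiv \rho \pmod{pX(T)}$, and since $W$ stabilizes $X(T)$ (indeed $X(T) \subseteq \Z^n$) such a $w$ yields a congruence $w(\omega_j + \rho) \equiv \rho \pmod{p\Z^n}$. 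A short computation shows that the multiset of coordinates of $\omega_j + \rho$ is $\{1, 2, \ldots, n+1\} \setminus \{n - j + 1\}$, while that of $\rho$ is $\{1, 2, \ldots, n\}$.

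The key step is then to observe that $w$ merely permutes the coordinates and flips signs, so it preserves the multiset of \emph{squares} of the coordinates; hence $\sum_i (\omega_j + \rho)_i^2 \equiv \sum_i \rho_i^2 \pmod p$. Subtracting the two (nearly identical) sums gives
\[
0 \equiv (n+1)^2 - (n-j+1)^2 = j\,(2n - j + 2) \pmod p .
\]
Since $1 \le j \le n < p$, the prime $p$ does not divide $j$, so $p \mid 2n - j + 2$. Because $j$ is even, $2n - j + 2 = 2\bigl(n - \tfrac{j}{2} + 1\bigr)$, and $p$ is odd (it exceeds $n \ge 3$), so $p \mid n - \tfrac{j}{2} + 1$; but $1 \le n - \tfrac{j}{2} + 1 \le n < p$, a contradiction. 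Therefore $\omega_j$ is not linked to $0$ under $\wh{W}_p$, and a fortiori not under $W_p = W \ltimes p\Z\Phi \subseteq \wh{W}_p$. Consequently $L(0) = k$ and $L(\omega_j)$ lie in different blocks of $G$, so $\opH^2(G, L(\omega_j)) = \Ext_G^2(k, L(\omega_j)) = 0$ by the Linkage Principle \cite[II.6.17]{Jantzen:2003}.

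I do not anticipate a real obstacle here. The one place requiring care is the bookkeeping in the reduction: correctly identifying $\{1, \ldots, n+1\} \setminus \{n-j+1\}$ as the coordinate multiset of $\omega_j + \rho$, and recognizing that the resulting difference of squares factors as $j(2n - j + 2)$. Crucially, the hypothesis that $j$ is \emph{even} must be used at the last step, since otherwise $2n - j + 2$ could be an odd prime equal to $p$ and both the argument and the statement would break down. It is also worth remarking explicitly that the argument is insensitive to the isogeny type of $G$, as it uses only $X(T) \subseteq \Z^n$.
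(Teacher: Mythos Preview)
Your proof is correct and shares the paper's overall strategy: translate the linkage condition into the $\varepsilon$-basis, observe that $W$ acts by signed permutations on the coordinate multiset $\{1,\ldots,n+1\}\setminus\{n-j+1\}$ of $\omega_j+\rho$, and extract a congruence from a $W$-invariant function of that multiset. The difference lies in the choice of invariant. The paper uses the \emph{product} of the coordinates, obtaining $(n+1)!/(n-j+1) \equiv \pm n! \pmod p$ and hence $(n+1) \equiv \pm(n-j+1) \pmod p$, which requires a short case split on the sign; you use the \emph{sum of squares}, which gives $(n+1)^2 - (n-j+1)^2 = j(2n-j+2) \equiv 0 \pmod p$ in one step. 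Your route is marginally cleaner, since the sum of squares is exactly $W$-invariant (no $\pm$ to track) and the difference of squares factors into precisely the two quantities $j$ and $2n-j+2$ arising from the paper's two cases. From that point on the two arguments coincide.
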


\begin{proof}
The proof is by contradiction. Suppose $p > n$ and $\omega_j \in \wh{W}_p \cdot 0$. Then there exist $w \in W$ and $\nu \in X(T)$ such that $w(\rho) = \omega_j + \rho + p\nu$, where $\rho = \sum_{i=1}^n \omega_i$. In terms of the $\ve$-basis for $\Phi$, we can write $\omega_j = \sum_{i=1}^j \ve_i$, $\rho = \sum_{i=1}^n (n-i+1)\ve_i$, and $\nu = \sum_{i=1}^n \nu_i \ve_i$, for some integers $\nu_i \in \Z$. As in type $B_n$, $W$ acts as the group of all permutations and sign changes of the vectors $\ve_1,\ldots,\ve_n$. Then the equation $w(\rho) = \omega_j + \rho + p\nu$ may be rewritten as $w(\rho) = \sum_{i=1}^n b_i \ve_i$, where
\[
b_i = \begin{cases} n-i+2 + p\nu_i & \text{if $1 \leq i \leq j$} \\ n-i+1+p\nu_i & \text{if $j < i \leq n$}. \end{cases}
\]
Moreover, up to sign changes, the set $\set{b_1,\ldots,b_n}$ must coincide with the set $\set{1,\ldots,n}$ of coefficients for $\rho$. Then $(\prod_{i=1}^n b_i) \equiv (\pm n!) \mod p$, that is, $(n+1)(n) \cdots (n-j+2)(n-j) \cdots (2)(1) \equiv (\pm n!) \mod p$. Since $p > n$, we can cancel like terms on either side of the congruence equation to obtain $(n+1) \equiv \pm (n-j+1) \mod p$. If $(n+1) \equiv (n-j+1) \mod p$, then $j \equiv 0 \mod p$, a contradiction because $2 \leq j \leq n < p$. On the other hand, if $(n+1) \equiv -(n-j+1) \mod p$, then $2n+2-j \equiv 0 \mod p$, which implies since $j$ is even and $p \neq 2$ that $(n+1 - \frac{j}{2}) \equiv 0 \mod p$. This is again a contradiction, because $0 < (n+1-\frac{j}{2}) \leq n < p$. Thus, we conclude if $p > n$ that $\omega_j \notin \wh{W}_p \cdot 0$, and hence that $\opH^2(G,L(\omega_j)) = 0$ by the Linkage Principle.
\end{proof}

\subsection{Homomorphisms in the truncated category} \label{subsection:truncatedcategory}

We next describe how the problem of computing the rational cohomology groups $\opH^2(G,L(\lambda))$ can be reduced to that of computing a certain space of $G$-homomorphisms in a suitable truncated category of rational $G$-modules. This setup is valid for arbitrary simple algebraic groups, so we will describe the results first in that generality, and then specialize to fundamental dominant weights in type $C_n$.

Let $\lambda \in X(T)_+$, and set $\Gamma = \set{\mu \in X(T)_+: \mu \leq \lambda}$. Then $\Gamma$ is a saturated subset of $X(T)_+$, and we can consider the full subcategory $\cC(\Gamma)$ of all rational $G$-modules with composition factors having highest weights in $\Gamma$. This is the \emph{truncated category} associated to $\Gamma$. If $V$ and $V'$ are modules in $\cC(\Gamma)$, then $\Ext_{\cC(\Gamma)}^i(V,V') \cong \Ext_G^i(V,V')$ for all $i \geq 0$ \cite[II.A.10]{Jantzen:2003}. It is well-known that $\cC(\Gamma)$ is a highest weight category \cite[Example 3.3(d) and Theorem 3.5(a)]{Cline:1988}, and hence Morita equivalent to the module category of a finite-dimensional quasi-hereditary algebra \cite[Theorem 3.6]{Cline:1988}.

Suppose $\lambda \geq 0$, so that $0 \in \Gamma$ and hence $k \in \cC(\Gamma)$. Let $P_0$ be the projective cover in $\cC(\Gamma)$ of $k$, and let $\Omega^1(k)$ be the kernel of the surjective map $P_0 \rightarrow k$. Then $P_0$ admits a Weyl filtration that restricts to a Weyl filtration on $\Omega^1(k)$; cf.\ \cite[Definition 3.1]{Cline:1988}. Since $\Omega^1(k)$ admits a Weyl filtration, the short exact sequence
\begin{equation} \label{eq:sesLlambda}
0 \rightarrow L(\lambda) \rightarrow H^0(\lambda) \rightarrow M \rightarrow 0
\end{equation}
gives rise via the associated long exact sequence in cohomology to the four-term exact sequence
\[
0 \rightarrow \Hom_G(\Omega^1(k),L(\lambda)) \rightarrow \Hom_G(\Omega^1(k),H^0(\lambda)) \rightarrow \Hom_G(\Omega^1(k),M) \rightarrow \Ext_G^1(\Omega^1(k),L(\lambda)) \rightarrow 0.
\]
Moreover, if $\lambda \neq 0$, then the short exact sequence $0 \rightarrow \Omega^1(k) \rightarrow P_0 \rightarrow k \rightarrow 0$ gives rise via the associated long exact sequence  in cohomology and \cite[II.4.13]{Jantzen:2003} to the isomorphism $\Hom_G(P_0,H^0(\lambda)) \cong \Hom_G(\Omega^1(k),H^0(\lambda))$, so we can identify $\Hom_G(\Omega^1(k),H^0(\lambda))$ as a vector space with $k^m$ where $m = [H^0(\lambda):k]$, and thus rewrite the four-term exact sequence as
\begin{equation} \label{eq:fourterm}
0 \rightarrow \opH^1(G,L(\lambda)) \rightarrow k^m \rightarrow \Hom_G(\Omega^1(k),M) \rightarrow \opH^2(G,L(\lambda)) \rightarrow 0.
\end{equation}

Now specialize to the case where $G$ is a simple algebraic group of type $C_n$ with $n \geq 3$, and $\lambda = \omega_j$ for some even integer $1 \leq j \leq n$. Then by the list in \cite[\S 7.1]{UGA:2011}, $\Gamma = \set{ \mu \in X(T)_+: \mu \leq \lambda} = \set{\omega_t: \text{$t$ is even and $t \leq j$}} \cup \set{0}$. In this case, the dimension of $\opH^1(G,L(\omega_j))$ can be computed from \cite[Theorem 3.5(iii)]{Kleshchev:2001}, while the value for $m = [H^0(\omega_j):k]$ can be computed from the explicit description for the submodule lattice of $V(\omega_j) = H^0(\omega_j)^*$ given in \cite[\S 3]{Kleshchev:1999}. Thus, for fundamental dominant weights in the root lattice in type $C_n$, the computation of $\opH^2(G,L(\omega_j))$ can be reduced to understanding the dimension of the $\Hom$-space
\[
\Hom_G(\Omega^1(k),M) = \Hom_G(\Omega^1(k),H^0(\omega_j)/L(\omega_j)).
\]

\subsection{Example; Type \texorpdfstring{$C_{12}$, $p=3$}{C12, p=3}}

Let $G$ be a simple algebraic group with underlying root system of type $C_{12}$, and suppose $p=3$. Let $\Gamma = \set{\mu \in X(T)_+ : \mu \leq \omega_{12}} = \set{0,\omega_2,\omega_4,\omega_6,\omega_8,\omega_{10},\omega_{12}}$. The structures of the nontrivial Weyl modules $V(\omega_{2i})$ for $1 \leq i \leq 6$, as computed from the algorithm presented in \cite[\S 3]{Kleshchev:1999}, are represented in Figure \ref{figure:inducedmodules}. The structures of the  induced modules $H^{0}(\omega_{2i})$ are obtained by flipping the diagrams for the corresponding Weyl modules upside down. As before, let $P_0$ be the projective cover of the trivial module in $\cC(\Gamma)$. Then the structure of $P_0$ is represented in Figure \ref{figure:P0}. The structure of $\Omega^1(k)$ is obtained from that of $P_0$ by deleting the topmost node  $L(\omega_{0})$. As usual, the diagrams should be read from top to bottom, with the head of the module at the top, and the socle at the bottom. Given a particular node $L$ in one of the diagrams, the module represented by that diagram then contains a submodule having all nodes below or equal to $L$ as composition factors. So for example, $P_0$ contains a submodule isomorphic to $V(\omega_{8})$, with composition factors $L(\omega_8)$, $L(\omega_6)$, and $L(\omega_0)$.

By \cite[Theorem 3.5]{Kleshchev:2001}, if $L$ and $L'$ are simple modules in $\cC(\Gamma)$, then $\dim \Ext_G^1(L,L') \leq 1$. From this and from Figures \ref{figure:inducedmodules} and \ref{figure:P0} we deduce that $H^0(\omega_6)/L(\omega_6)$ occurs as a quotient of $\Omega^1(k)$. (The one-dimensional upper bound on the $\Ext^1$-groups between simple modules is necessary to conclude that the subquotient of $P_0$ that looks like $H^0(\omega_6)/L(\omega_6)$ actually is isomorphic to $H^0(\omega_6)/L(\omega_6)$, and is not some other inequivalent extension of $L(\omega_2)$ by $L(\omega_0)$.) Also by \cite[Theorem 3.5]{Kleshchev:2001} (or from Figure \ref{figure:inducedmodules}) we get $\opH^1(G,L(\omega_6)) \cong k$ and $[H^0(\omega_6):k] = 1$. Then the four-term exact sequence \eqref{eq:fourterm} implies that $\opH^2(G,L(\omega_6)) \cong \Hom_G(\Omega^1(k),H^0(\omega_6)/L(\omega_6)) \cong k$.

\begin{figure}[htbp]
\tikzstyle{every node} = [fill=white, minimum size=2em, font=\small]
\newcommand{\cf}[1]{$L(\omega_{#1})$}
\begin{tikzpicture}[scale=1.2,x=2cm]
\draw (0,1) node {\cf{2}} -- (0,-1) node {\cf{0}};
\draw (1,0) node {\cf{4}};
\draw (2,1) node {\cf{6}} -- ++ (0,-1) node {\cf{0}} -- ++ (0,-1) node {\cf{2}};
\draw (3,1) node {\cf{8}} -- ++ (0,-1) node {\cf{6}} -- ++ (0,-1) node {\cf{0}};
\draw (4,0) node {\cf{10}};
\draw (5,1) node {\cf{12}} -- ++ (0,-2) node {\cf{8}};
\end{tikzpicture}
\caption{Weyl modules $V(\omega_{2i})$, $1 \leq i \leq 6$, type $C_{12}$, $p=3$.}
\label{figure:inducedmodules}
\end{figure}

\begin{figure}[htbp]
\tikzstyle{every node} = [fill=white, minimum size=2em, font=\small]
\newcommand{\cf}[1]{$L(\omega_{#1})$}
\begin{tikzpicture}[x=2cm,y=1.2cm]
  \draw (0,0) node {\cf{0}}
  -- ++ (1,-.5) node (ur) {\cf{6}}
  -- ++ (0,-1) node {\cf{0}}
  -- ++ (0,-1) node {\cf{2}}
  -- ++ (-1,-.5) node {\cf{0}}
  -- ++ (-1, .5) node (ll) {\cf{6}}
  -- ++ (0,1) node {\cf{0}}
  -- ++ (0,1) node {\cf{2}}
  -- cycle;
  
  \draw (ll) -- ++ (1,1) node {\cf{8}} -- (ur);
\end{tikzpicture}
\caption{Projective cover of $k$ in $\cC(\{\mu \leq \omega_{12}\})$, type $C_{12}$, $p=3$.} \label{figure:P0}
\end{figure}

\subsection{Examples for small primes and low ranks}

Continue to assume that $G$ is a simple algebraic group of type $C_n$ with $n \geq 3$. As the rank $n$ increases, it becomes increasingly difficult to compute the structure of the module $\Omega^1(k)$ as in Section \ref{subsection:truncatedcategory}. Thus, unless $n$ is very small, it is often more practical to apply ad hoc methods to compute the dimension of the second cohomology group $\opH^2(G,L(\omega_j))$. For example, by considering a short exact sequence of the form \eqref{eq:sesLlambda} and applying \cite[II.4.13]{Jantzen:2003}, one obtains $\opH^2(G,L(\lambda)) \cong \opH^1(G,M)$. One can then use Adamovich's \cite{Adamovich:1986} combinatorial description for the submodule structure of the induced modules $H^0(\omega_i)$ (rather, for the dual Weyl modules $V(\omega_i) = H^0(\omega_i)^*$), as presented in \cite{Kleshchev:1999}, together with the results in \cite{Kleshchev:2001}, to inductively compute the dimension of $\opH^1(G,M)$.

We illustrate the above approach for the case $\Phi = C_{12}$ with $p=3$. For $j \in \set{4,10}$ one has $L(\omega_j) = H^0(\omega_j)$, so $\opH^2(G,L(\omega_j)) = 0$. For $j = 2$ one obtains $\opH^2(G,L(\omega_2)) \cong \opH^1(G,k) = 0$, and for $j=6$ one obtains $\opH^2(G,L(\omega_6)) \cong \opH^1(G,H^0(\omega_6)/L(\omega_6))$, and the short exact sequence
\[
0 \rightarrow k \rightarrow H^0(\omega_6)/L(\omega_6) \rightarrow L(\omega_2) \rightarrow 0.
\]
Since $\opH^1(G,k) = \opH^2(G,k) = 0$, one obtains from the associated long exact sequence in cohomology that $\opH^1(G,H^0(\omega_6)/L(\omega_6)) \cong \opH^1(G,L(\omega_2)) = k$. Next, for $j=8$ one has $\opH^2(G,L(\omega_8)) \cong \opH^1(G,H^0(\omega_8)/L(\omega_8))$ and the short exact sequence
\[
0 \rightarrow H^0(\omega_8)/L(\omega_8) \rightarrow H^0(\omega_6) \rightarrow L(\omega_2).
\]
Since $\Hom_G(k,L(\omega_2)) = 0 = \opH^1(G,H^0(\omega_6))$, one obtains via the associated long exact sequence in cohomology that $\opH^1(G,H^0(\omega_8)/L(\omega_8)) = 0$. Finally, $\opH^2(G,L(\omega_{12})) \cong \opH^1(G,L(\omega_8)) = 0$.

A summary of our results for certain values for $n$ and for the primes $p=3$ and $p=5$ is presented in Tables \ref{table:typecp=3} and \ref{table:typecp=5}. In every example we computed, we found $\dim \opH^2(G,L(\omega_j)) \leq 1$. A point of interest in our calculations relates to the Lusztig Conjecture for the characters of irreducible rational $G$-modules. The Lusztig Conjecture is equivalent to the statement that the rational cohomology group $\Ext_{G}^{i}(L(\lambda), H^{0}(\mu))$ is nonzero only if the degree $i$ has a fixed parity depending on $\lambda$ and $\mu$ \cite[II.C.2]{Jantzen:2003}. Our results show that this parity vanishing condition does not hold for the primes $3$ and $5$.  For example, with $p=3$ and $n=12$,
\begin{equation} 
\text{Ext}_{G}^{1}(L(\omega_{6}),H^{0}(0))\cong \opH^1(G,L(\omega_6))\cong k
\end{equation} 
and 
\begin{equation} 
\text{Ext}_{G}^{2}(L(\omega_{6}),H^{0}(0))\cong \opH^2(G,L(\omega_6)) \cong k. 
\end{equation} 
When $p=5$ and $n=30$, the parity vanishing condition also fails to hold for $L(\omega_{10})$; see Table \ref{table:typecp=5}.

\begin{table}[htbp]
  \newcommand{\none}{\textrm{none}}
  \renewcommand{\arraystretch}{1.2}
 \begin{tabular}{c|l}
    $n$ & $j$ \\
    \hline
    6 & 6\\
    7 & 6\\
    8 & \none \\
    9 & 6\\
    10 & 6\\
    11 & \none \\
    12 & 6\\
    13 & 6\\
    14 & \none \\
  \end{tabular}
  \hspace{3ex}
  \begin{tabular}{c|l}
    $n$ & $j$ \\
    \hline
    15 & 6, 8\\
    16 & 6, 10\\
    17 & \none \\
    18 & 6, 14\\
    19 & 6, 16\\
    20 & 18\\
    21 & 6, 18\\
    22 & 6, 18\\
    23 & 18\\
  \end{tabular}
  \hspace{3ex}
  \begin{tabular}{c|l}
    $n$ & $j$ \\
    \hline
    24 & 6, 8, 18\\
    25 & 6, 10, 18\\
    26 & \none \\
    27 & 6, 14\\
    28 & 6, 16\\
    29 & 18\\
    30 & 6, 18\\
    31 & 6, 18\\
    32 & 18\\
  \end{tabular}
  \hspace{3ex}
  \begin{tabular}{c|l}
    $n$ & $j$ \\
    \hline
    33 & 6, 8, 18\\
    34 & 6, 10, 18\\
    35 & \none \\
    36 & 6, 14\\
    37 & 6, 16\\
    38 & 18\\
    39 & 6, 18, 20\\
    40 & 6, 18, 22\\
     & \\
\end{tabular}
\vspace{1pc}
\caption{Type $C_n$, $p = 3$.  Values of $j$ for which $\opH^2(G,L(\omega_j)) \cong k$.} \label{table:typecp=3}
\end{table}

\begin{table}[htbp]
  \newcommand{\none}{\textrm{none}}
  \renewcommand{\arraystretch}{1.2}
  \begin{tabular}{c|l}
    $n$ & $j$ \\
    \hline
    10 & 10\\
    11 & 10\\
    12 & 10\\
    13 & 10\\
    14 & \none \\
    15 & 10\\
    16 & 10\\
    17 & 10\\
    18 & 10\\
    19 & \none \\
  \end{tabular}
  \hspace{3ex}
  \begin{tabular}{c|l}
    $n$ & $j$ \\
    \hline
    20 & 10\\
    21 & 10\\
    22 & 10\\
    23 & 10\\
    24 & \none \\
    25 & 10\\
    26 & 10\\
    27 & 10\\
    28 & 10\\
    29 & \none \\
  \end{tabular}
  \hspace{3ex}
  \begin{tabular}{c|l}
    $n$ & $j$ \\
    \hline
    30 & 10\\
    31 & 10\\
    32 & 10\\
    33 & 10\\
    34 & \none \\
    35 & 10, 12\\
    36 & 10, 14\\
    37 & 10, 16\\
    38 & 10, 18\\
    39 & \none \\
  \end{tabular}
  \hspace{3ex}
  \begin{tabular}{c|l}
    $n$ & $j$ \\
    \hline
    40 & 10, 22\\
    41 & 10, 24\\
    42 & 10, 26\\
    43 & 10, 28\\
    44 & \none \\
    45 & 10, 32\\
    46 & 10, 34\\
    47 & 10, 36\\
    48 & 10, 38\\
    49 & \none \\
  \end{tabular}
  \hspace{3ex}
  \begin{tabular}{c|l}
    $n$ & $j$ \\
    \hline
    50 & 10, 42\\
    51 & 10, 44\\
    52 & 10, 46\\
    53 & 10, 48\\
    54 & 50\\
    & \\
    & \\
    & \\
    & \\
    & \\
 \end{tabular}
\vspace{1pc}
\caption{Type $C_n$, $p = 5$.  Values of $j$ for which $\opH^2(G,L(\omega_j)) \cong k$.}
\label{table:typecp=5}
\end{table}

\subsection{Open questions} \label{subsection:openquestions}

We present here some open questions and directions for future research based on the calculations and methods of this paper. First, many of the results and calculations obtained in this paper presumably hold under weaker conditions on $p$ and $q$ than those assumed in Table~\ref{table:pandq}. One would thus like to obtain sharp lower bounds on $p$ and $q$ under which results like Theorem \ref{theorem:Ext2restrictioniso}, Corollary \ref{corollary:H2vanishing}, or Theorem \ref{theorem:otherH2calculations} hold. In particular, through a more subtle analysis of the case $p = 3$, one could hope to extend by our methods the conclusion of Theorem \ref{theorem:otherH2calculations} to include Bell's calculations \cite{Bell:1978} of nonzero second cohomology for finite special linear groups over fields of characteristic three. We think it should also be possible to obtain by our methods many of the results of Kleshchev \cite{Kleshchev:1994} on the cohomology of finite Chevalley groups with coefficients in modules having one-dimensional weight spaces.

The results presented in Tables \ref{table:typecp=3} and \ref{table:typecp=5} suggest the following questions for type $C_n$:
\begin{itemize}
\item Does one always have $\dim \opH^2(G,L(\omega_j)) \leq 1$? More generally, is there a uniform upper bound, independent of $j$ or $n$, for $\dim \opH^2(G,L(\omega_j))$?
\item Given $p$ and $n$, for which $j$ is $\opH^2(G,L(\omega_j)) \neq 0$? Can the non-vanishing of $\opH^2(G,L(\omega_j))$ be described in terms of the $p$-adic decompositions of $n$ and $j$?
\end{itemize}

\section{VIGRE Algebra Group at the University of Georgia}

This project was initiated during Fall Semester 2010 under the Vertical Integration of Research and Education (VIGRE) Program sponsored by the National Science Foundation (NSF) at the Department of Mathematics of the University of Georgia (UGA). We would like to acknowledge the NSF VIGRE grant DMS-0738586 for its financial support of the project. In academic year 2010--2011, the VIGRE Algebra Group at UGA consisted of 5 faculty members, 2 postdoctoral fellows, and 9 graduate students. The email addresses of the group members are given below.

\bigskip
\begin{tabbing}
\hspace*{\parindent}\=Christopher M. Drupieski\ \ \ \= \kill
Faculty: \\[3pt]
\>Brian D.\ Boe \>brian@math.uga.edu \\
\>Jon F.\ Carlson \> jfc@math.uga.edu \\
\>Leonard Chastkofsky \> lenny@math.uga.edu \\
\>Daniel K.\ Nakano \> nakano@math.uga.edu \\
\>Lisa Townsley \> townsley@math.uga.edu\\[6pt]
Postdoctoral Fellows:  \\[3pt]
\>Christopher M.\ Drupieski \> cdrup@math.uga.edu \\
\>Niles Johnson \> njohnson@math.uga.edu \\[6pt]
Graduate Students:  \\[3pt]
\>Brian Bonsignore \> bbonsign@math.uga.edu \\
\>Theresa Brons \> tbrons@math.uga.edu \\
\>Wenjing Li \> wli@math.uga.edu \\
\>Phong Thanh Luu \> pluu@math.uga.edu \\
\>Tiago Macedo \> t025318@dac.unicamp.br \\
\>Nham Vo Ngo \> nngo@math.uga.edu \\
\>Brandon L.\ Samples \> bsamples@math.uga.edu \\
\>Andrew J.\ Talian \> atalian@math.uga.edu \\
\>Benjamin J.\ Wyser \> bwyser@math.uga.edu \\
\end{tabbing}


\providecommand{\bysame}{\leavevmode\hbox to3em{\hrulefill}\thinspace}

\end{document}